\numberwithin{equation}{section}
\theoremstyle{plain}
\newtheorem{thm}{Theorem}[section]
\newtheorem{prop}[thm]{Proposition}
\newtheorem{defi}[thm]{Definition}
\newtheorem{lem}[thm]{Lemma}
\theoremstyle{remark}
\newtheorem{rema}[thm]{Remark}
\newcommand{\Z}{\mathbb{Z}}
\newcommand{\C}{\mathbb{C}}
\title[Boundary quantum KZ equations]{Boundary quantum Knizhnik-Zamolodchikov equations and fusion}
\author{Nicolai Reshetikhin}
\address{Department of Mathematics, University of California, Berkeley,
CA 94720, USA, KdV Institute for Mathematics, University of Amsterdam,
Science Park 904, 1098 XH Amsterdam, The Netherlands \& ITMO University, Kronverkskii ave. 49, Saint Petersburg, 197101, Russia.}
\email{reshetik@math.berkeley.edu}
\author{Jasper Stokman}
\address{KdV Institute for Mathematics, University of Amsterdam, Science Park 904, 1098 XH Amsterdam, The Netherlands \& IMAPP, Radboud University, Heyendaalseweg 135, 6525 AJ Nijmegen, The Netherlands.}
\email{j.v.stokman@uva.nl}
\author{Bart Vlaar}
\address{KdV Institute for Mathematics, University of Amsterdam, Science Park 904, 1098 XH Amsterdam, The Netherlands.}
\email{b.h.m.vlaar@uva.nl}
\begin{document}
\keywords{}
\begin{abstract}
In this paper we extend our previous results concerning Jackson integral solutions of the boundary quantum Knizhnik-Zamolodchikov equations with diagonal K-operators to higher-spin representations of quantum affine $\mathfrak{sl}_2$. 
First we give a systematic exposition of known results on $R$-operators acting in the tensor product of evaluation representations in Verma modules over quantum $\mathfrak{sl}_2$.
We develop the corresponding fusion of $K$-operators, which we use to construct diagonal $K$-operators in these representations. 
We construct Jackson integral solutions of the associated boundary quantum Knizhnik-Zamolodchikov equations and explain how in the finite-dimensional case they can be obtained from our previous results by the fusion procedure.
\end{abstract}
\maketitle
\setcounter{tocdepth}{1}
\tableofcontents
\section{Introduction}

The boundary q-Knizhnik-Zamolodchikov (qKZ) equations have their origins in the representation theory through works of Cherednik \cite{CQKZ,CQKZ2}
and in quantum field theory and in statistical mechanics with special "integrable" boundary conditions, see, e.g., \cite{BPT,JKKMW,dF,dFZJ,W}. 
For detailed references see \cite{RSV}.
Their formulation involves solutions to the Yang-Baxter equation, the so-called $R$-operators or $R$-matrices, and solutions to the reflection equation, known as (boundary) $K$-operators or $K$-matrices. 

\subsection{The boundary qKZ equations}
Let $M^\ell$ be the Verma module over quantum $\mathfrak{sl}_2$ with highest weight $\ell \in \C$.
Then we will denote by $\mathcal{R}^{k\ell}(x)$ the operator acting in $M^k\otimes M^\ell$ which is the evaluation of the truncated
universal $R$-matrix for quantum affine $\mathfrak{sl}_2$ acting in the tensor product of corresponding evaluation representations. 
It satisfies the Yang-Baxter equation:
\begin{equation}\label{qYBkl}
\mathcal{R}^{k\ell}_{12}(x-y)\mathcal{R}^{km}_{13}(x-z)\mathcal{R}^{\ell m}_{23}(y-z)=
\mathcal{R}^{\ell m}_{23}(y-z)\mathcal{R}^{km}_{13}(x-z)\mathcal{R}^{k\ell}_{12}(x-y)
\end{equation}
This is an equation in $M^k\otimes M^\ell\otimes M^m$ and we are using the standard notations
$R^{k\ell}_{12}(x)=R^{k \ell}(x)\otimes \mathrm{Id}_{M^m}$, etc.
For details and references see Section \ref{ic}.

Given the above $R$-operator $\mathcal{R}^{k\ell}(x)$, operators $\mathcal{K}^{+,\ell}(x)$ and $\mathcal{K}^{-,\ell}(x)$  acting in $M^\ell$ are called left and right $K$-operators if they satisfy the left and right reflection equations, respectively. These equations are also known as ``boundary Yang Baxter equations'' and were introduced in \cite{Sk}. 
In the current setting they are given by
\begin{equation}
\label{rree-gen}
 \begin{aligned}
& \mathcal{R}^{k\ell}(x-y)\mathcal{K}_{1}^{+,k}(x)\mathcal{R}^{\ell k}_{21}(x+y)\mathcal{K}_{2}^{+,\ell}(y) = \\
& \qquad = \mathcal{K}_{2}^{+,\ell}(y)\mathcal{R}^{k\ell}(x+y)\mathcal{K}_{1}^{+,k}(x)\mathcal{R}^{\ell k}_{21}(x-y), \\
& \mathcal{R}^{\ell k}_{21}(x-y)\mathcal{K}_{1}^{-,k}(x)\mathcal{R}^{k\ell}(x+y)\mathcal{K}_{2}^{-,\ell}(y) = \\
& \qquad = \mathcal{K}_{2}^{-,\ell}(y)\mathcal{R}^{\ell k}_{21}(x+y)\mathcal{K}_{1}^{-,k}(x)\mathcal{R}^{k\ell}(x-y). 
\end{aligned}
\end{equation}
These are equations in $M^k\otimes M^\ell$; we are using the notations $\mathcal{K}_{1}^{\pm,k}(x)=\mathcal{K}^{\pm,k}(x)\otimes \textup{Id}$, $\mathcal{K}_{2}^{\pm, \ell}(y)=
\textup{Id}\otimes \mathcal{K}^{\pm, \ell}(y)$ and $\mathcal{R}_{21}^{\ell k}(x) := \mathcal{P}^{\ell k}\mathcal{R}^{\ell k}(x) \mathcal{P}^{k\ell}$, where $\mathcal{P}^{k \ell}: M^{k}\otimes M^{\ell}\to M^{\ell}\otimes M^{k}$ is the permutation operator $\mathcal{P}^{k \ell}(m^k \otimes m^\ell)=m^\ell \otimes m^k $ ($m^k  \in M^k$, $m^\ell \in M^\ell$). 

For $\underline{\ell} = (\ell_1,\ldots,\ell_N) \in \C^N$, consider the tensor product
\[ M^{\underline{\ell}}=M^{\ell_1}\otimes \cdots \otimes M^{\ell_N}. \]
The boundary qKZ equations \cite{CQKZ,CQKZ2} in $M^{\underline{\ell}}$ are given by the following compatible system of difference equations
\begin{equation}\label{ReflqKZ}
f(\mathbf{t}+\tau \bm e_r)=
\Xi_r^{\underline{\ell}}(\mathbf{t};\xi_+,\xi_-;\tau)f(\mathbf{t}),\qquad r=1,\ldots,N
\end{equation}
for $M^{\underline{\ell}}$-valued meromorphic functions $f(\mathbf{t})$ in $\mathbf{t}\in\mathbb{C}^N$, where $\tau\in\mathbb{C}^\times$ and $\{\bm e_r\}_r$ is the standard orthonormal
basis of $\mathbb{R}^N$. Here
\begin{equation}\label{Atauj}
\begin{split}
\Xi_r^{\underline{\ell}}(\mathbf{t};\xi_+,\xi_-;\tau)&:=
\mathcal{R}_{r,r+1}^{\ell_r\ell_{r+1}}(t_r-t_{r+1}+\tau)\cdots \mathcal{R}_{r,N}^{\ell_r\ell_N}(t_r-t_N+\tau)\\
&\qquad \times \mathcal{K}_{r}^{+,\ell_r}(t_r+\frac{\tau}{2})
\mathcal{R}_{N,r}^{\ell_N\ell_r}(t_N+t_r)\cdots \mathcal{R}_{r+1,r}^{\ell_{r+1}\ell_r}(t_{r+1}+t_r)\\
&\qquad \times \mathcal{R}_{r-1,r}^{\ell_{r-1}\ell_r}(t_{r-1}+t_r)\cdots
\mathcal{R}_{1,r}^{\ell_1\ell_r}(t_1+t_r)\mathcal{K}_{r}^{-,\ell_r}(t_r)\\
&\qquad \times \mathcal{R}_{r,1}^{\ell_r\ell_1}(t_r-t_1)\cdots \mathcal{R}_{r,r-1}^{\ell_r\ell_{r-1}}(t_r-t_{r-1})
\end{split}
\end{equation}
is the (boundary) transport operator on $M^{\underline{\ell}}$, depending meromorphically on $\mathbf{t}\in\mathbb{C}^N$. 
The compatibility of the system \eqref{ReflqKZ} is guaranteed by the conditions
\[ \Xi_r^{\underline{\ell}}(\mathbf{t}+\mathbf{e}_s \tau;\xi_+,\xi_-;\tau)\Xi_s^{\underline{\ell}}(\mathbf{t};\xi_+,\xi_-;\tau) = \Xi_s^{\underline{\ell}}(\mathbf{t}+\mathbf{e}_r \tau;\xi_+,\xi_-;\tau)\Xi_r^{\underline{\ell}}(\mathbf{t};\xi_+,\xi_-;\tau), \]
for $r,s=1,\ldots,N$, which themselves are consequences of the quantum Yang-Baxter and reflection equations (\ref{qYBkl}-\ref{rree-gen}). In this paper we construct explicit Jackson integral solutions of \eqref{ReflqKZ} when the left and right $K$-operators 
$\mathcal{K}^{\pm,\ell}(x)$ are of the form $\mathcal{K}^{\xi_{\pm},\ell}(x)$ with $\mathcal{K}^{\xi,\ell}(x)$ ($\xi\in\mathbb{C}$) an explicit
one-parameter family of $K$-operators diagonal with respect to the weight basis of $M^\ell$.

\subsection{Finite-dimensional representations and fusion}

When $\ell \in \frac{1}{2} \Z_{\geq 0}$ the representation $M^\ell$ is no longer irreducible; it has an infinite-dimensional subrepresentation and an irreducible finite-dimensional quotient representation $V^\ell$. 
When some of the $\ell_s$'s in the boundary qKZ equations are in $\frac{1}{2}\Z_{\geq 0}$ the equations descend to the tensor product of corresponding quotient modules.

For $k,\ell \in \frac{1}{2} \Z_{\geq 0}$, the tensor product of the associated evaluation modules $V^k(x) \otimes V^\ell(y)$ becomes reducible for special values of $x, y \in \C$
\cite{CP}. Owing to this degeneracy, $R$-operators acting in (tensor products of) higher-dimensional evaluation modules can be obtained from corresponding objects acting in (tensor products of) lower-dimensional evaluation modules
through a process called fusion \cite{KRS,EFK}. 
We extend this re\-pre\-sen\-ta\-tion-theoretic approach to fusion of $K$-operators in Section \ref{RE4section}. 
Such $R$- and $K$-operators can then be generalized to $R$- and $K$-operators associated with 
modules $M^\ell(x)$ for arbitrary $\ell \in \C$ by means of an analytical continuation.
This will allow us to establish the above reflection equation \eqref{rree-gen} for a larger class of $K$-operators than hitherto has been done. In particular, we obtain the diagonal
$K$-operators $\mathcal{K}^{\xi,\ell}(x)$ from this fusion approach applied to Cherednik's \cite{CQKZ2} diagonal $K$-matrix associated to $V^{\frac{1}{2}}$. The 
$\mathcal{K}^{\xi,\ell}(x)$ are closely
related to the family of $K$-operators constructed in \cite{DN} using the $q$-Onsager algebra.

For other approaches to fusion of $K$-operators, see e.g. \cite{DKM,KS,IOZ,MN,MNR,Z}.

\subsection{Main result}

In \cite{RSV} we constructed $q$-integral solutions to \eqref{ReflqKZ} when all $\ell_s=\frac{1}{2}$.
In this case the corresponding irreducible quotient spaces are two-dimensional and \eqref{ReflqKZ} reduces to an equation in $(\C^2)^{\otimes N}$.
The main result of this paper is the construction of $q$-integral solutions to (\ref{ReflqKZ}) for arbitrary $\ell_s \in \C$. For $\ell_s \in \frac{1}{2}\Z_{\geq 0}$ it gives Jackson integral
solutions in the tensor product of corresponding irreducible representations $V^{\ell_s}$. 
Our main result (Theorem \ref{mr}) can be summarized as follows.
\begin{thm}\label{mrintro}
Let $\xi_+,\xi_-\in\mathbb{C}$ and let $g_{\xi_+,\xi_-}(x)$, $h(x)$
and $F^{\ell}(x)$ be meromorphic functions in $x\in\mathbb{C}$ satisfying
the functional equations
\begin{equation*}
\begin{split}
g_{\xi_+,\xi_-}(x+\tau)&= \frac{\sinh(\xi_--x-\frac{\eta}{2})\sinh(\xi_+-x-\frac{\tau}{2}-\frac{\eta}{2})}{\sinh(\xi_-+x+\tau-\frac{\eta}{2}) \sinh(\xi_++x+\frac{\tau}{2}-\frac{\eta}{2})}g_{\xi_+,\xi_-}(x),\\
h(x+\tau)&=\frac{\sinh(x+\tau)\sinh(x+\eta)}{\sinh(x)\sinh(x+\tau-\eta)}h(x),\\
F^\ell(x+\tau)&=\frac{\sinh(x+\tau-\ell\eta)}{\sinh(x+\tau+\ell\eta)}F^\ell(x).
\end{split}
\end{equation*}
Given fixed generic $\mathbf{x}_0\in\mathbb{C}^S$, and fixed parameters $\xi_+,\xi_-,\eta,\tau$ in a suitable parameter domain (see Section \ref{sec:mr}),
the 
$M^{\underline{\ell}}$-valued sum
\begin{equation*}
\begin{split}
f_S^{\underline{\ell}}(\mathbf{t}):=\sum_{\mathbf{x}\in\mathbf{x}_0+\tau\mathbb{Z}^S}
&\Bigl(\prod_{i=1}^Sg_{\xi_+,\xi_-}(x_i)\Bigr)
\Bigl(\prod_{1\leq i<j\leq S}h(x_i+ x_j)h(x_i-x_j)\Bigr)\\
&\times\Bigl(\prod_{r=1}^N\prod_{i=1}^S F^{\ell_r}(t_r+ x_i) F^{\ell_r}(t_r- x_i)\Bigr)
\Bigl( \prod_{i=1}^S\overline{\mathcal{B}}^{\xi_-}(x_i;\mathbf{t}) \Bigr) \Omega
\end{split}
\end{equation*}
is a solution of the boundary qKZ equations \eqref{ReflqKZ}, meromorphic in $\mathbf{t}\in\mathbb{C}^N$.
Here, $\overline{\mathcal{B}}^\xi(x;\mathbf{t})$ are matrix elements of the boundary quantum monodromy matrix 
and $\Omega=m_1^{\ell_1} \otimes \cdots \otimes m_1^{\ell_N}$ is the tensor product of highest-weight vectors $m_1^{\ell_s} \in M^{\ell_s}$ (see Section \ref{Bethevectors} for details). 
\end{thm}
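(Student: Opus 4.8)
The plan is to verify directly that $f_S^{\underline\ell}(\mathbf t)$ solves \eqref{ReflqKZ} by exploiting the algebraic Bethe ansatz structure underlying the boundary quantum monodromy matrix and its off-diagonal entries $\overline{\mathcal B}^{\xi}(x;\mathbf t)$. The starting point is the observation that the transport operator $\Xi_r^{\underline\ell}(\mathbf t;\xi_+,\xi_-;\tau)$ can be written in terms of a transfer-matrix-like object built from the double-row monodromy matrix, so that the boundary qKZ equations are equivalent to a system of intertwining relations between the transport operator and the creation operators $\overline{\mathcal B}^{\xi_-}(x_i;\mathbf t)$ acting on the reference state $\Omega$. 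Concretely, I would first establish (or recall from Section \ref{Bethevectors}) that $\Omega$ is an eigenvector of the relevant diagonal entries of the boundary monodromy matrix with explicit eigenvalues expressed through the functions $\sinh(t_r\pm x\mp\ell_r\eta)$ and the boundary factors $\sinh(\xi_\pm\mp x-\tfrac\eta2)$; these eigenvalues are precisely what force the functional equations for $F^{\ell}$, $h$ and $g_{\xi_+,\xi_-}$ in the statement.

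The key steps, in order, would be: (i) rewrite $\Xi_r^{\underline\ell}$ as a "shift-and-conjugate" operator intertwining the product of $\overline{\mathcal B}^{\xi_-}(x_i;\mathbf t)$'s, using the reflection equation \eqref{rree-gen} and the Yang--Baxter equation \eqref{qYBkl} to push the $R$-operators in $\Xi_r$ through the creation operators; this produces the standard "wanted" and "unwanted" terms of the algebraic Bethe ansatz; (ii) show that after applying $\Xi_r^{\underline\ell}(\mathbf t)$ to a single summand of $f_S^{\underline\ell}$, the wanted term reproduces the same summand evaluated at $\mathbf t+\tau\bm e_r$ up to the scalar ratio of the prefactors $\prod_i g_{\xi_+,\xi_-}(x_i)\prod_{i<j}h(x_i\pm x_j)\prod_{r,i}F^{\ell_r}(t_r\pm x_i)$, and that this ratio is exactly cancelled by the functional equations imposed on $g$, $h$, $F^\ell$ together with the eigenvalue of $\Omega$; (iii) handle the unwanted terms by the usual mechanism: each unwanted term, as a function of the summation variable $x_i$, is $\tau$-periodic up to an explicitly computable factor, so that summing over the full lattice $\mathbf x_0+\tau\mathbb Z^S$ makes them telescope to zero — here one needs genericity of $\mathbf x_0$ and convergence of the Jackson sum in the stated parameter domain to justify the rearrangement; (iv) appeal to the finite-dimensional case of \cite{RSV} (all $\ell_s=\tfrac12$) as a consistency check and, via the fusion procedure of Section \ref{RE4section}, as an alternative derivation when $\ell_s\in\tfrac12\mathbb Z_{\geq0}$, which also pins down the normalization.

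The main obstacle I anticipate is step (iii): controlling the unwanted terms for $S>1$ creation operators simultaneously. The symmetrization over the Weyl group of type $B_S$ (the sign changes $x_i\mapsto -x_i$ built into the products $h(x_i+x_j)h(x_i-x_j)$ and $F^{\ell_r}(t_r+x_i)F^{\ell_r}(t_r-x_i)$) is what ultimately kills these terms, but verifying that the residues of the unwanted contributions at the "collision" hyperplanes $x_i=\pm x_j$ and $x_i=\pm t_r+\cdots$ cancel pairwise requires a careful bookkeeping of the commutation relations among the $\overline{\mathcal B}^{\xi_-}$'s and of the quasi-periodicity factors; a clean way to organize this is to first prove the $S=1$ case in full and then induct on $S$, using that $\overline{\mathcal B}^{\xi_-}(x_S;\mathbf t)$ intertwines the $S$-variable transport operator with an $(S-1)$-variable one in which the $\ell_r$'s are effectively shifted. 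A secondary technical point is justifying the meromorphic continuation in $\mathbf t$ and the interchange of the infinite sum with the action of $\Xi_r^{\underline\ell}$, which is where the "suitable parameter domain" hypothesis of Section \ref{sec:mr} enters.
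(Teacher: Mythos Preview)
Your overall architecture---reduce the qKZ equation to an intertwining relation between the transport operator and the string of creation operators $\overline{\mathcal B}^{\xi_-}(x_i;\mathbf t)$, then separate into ``wanted'' and ``unwanted'' contributions and use the functional equations for $g,h,F^\ell$---matches the paper's strategy. The depth-zero (wanted) part is exactly as you describe, and the paper handles it just as you outline in step (ii).

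There is, however, a genuine gap in step (iii). Your expectation that the unwanted terms ``telescope to zero'' after summing over the lattice is the mechanism from the \emph{bulk} qKZ case, and it does not carry over to the boundary setting. Here the transport operator $\Xi_r^{\underline\ell}$ contains the left reflection $\mathcal K^{\xi_+,\ell_r}(t_r+\tfrac{\tau}{2})$, which is \emph{not} built into the creation operators $\overline{\mathcal B}^{\xi_-}$. Consequently the unwanted terms on the left-hand side of the qKZ equation carry the $K^{\xi_+}$-eigenvalue $C_{d+1}^{\ell_r}(t_r+\tfrac\tau2;\xi_+)$ (where $d$ is the number of creation operators that ``hit'' the $r$-th site), while the right-hand side unwanted terms do not. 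Neither side vanishes separately; what must be shown is that they are \emph{equal}. The paper organizes this not by induction on $S$ but by fixing $S$ and stratifying by the depth $d\in\{0,\dots,S\}$. For each $d\ge 1$ one first uses the $g,h$ functional equations to shift the summation variables $x_j\mapsto x_j-\tau$ for those $j$ carrying a $+$ sign (this is the telescoping step, and it reduces all $2^d$ sign patterns to a single one), after which the equality of the two sides boils down to a pointwise multivariate trigonometric identity: a type-$C_d$ Weyl-group anti-symmetrization of an explicit product of $\sinh$'s equals the Weyl denominator times $\prod_{i=1}^d\sinh(\xi_++t_r+\tfrac\tau2+(\ell_r-i+\tfrac12)\eta)$, which precisely cancels $C_{d+1}^{\ell_r}(t_r+\tfrac\tau2;\xi_+)$. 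This identity (the paper's Lemma \ref{multivariable}) is the missing ingredient in your plan; it does not follow from residue matching at collision hyperplanes or from periodicity alone. Your induction-on-$S$ scheme, with $\overline{\mathcal B}^{\xi_-}(x_S;\mathbf t)$ intertwining transport operators of different sizes, would run into the same obstruction: the intertwining produces extra terms weighted by $C_n^{\ell_r}$ that must be matched, not annihilated.
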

Explicit formulae for functions $g_{\xi_+,\xi_-}, h$ and $F^\ell$ are given in Section \ref{sec:mr}. 
We will discuss integral (not Jackson integral) solutions in a forthcoming paper. It yields a complete system of solutions to the boundary qKZ equations.

Theorem \ref{mrintro} gives for $\ell_s\in\frac{1}{2}\mathbb{Z}_{\geq 0}$ Jackson integral solutions of the boundary qKZ equations taking values in
\[
V^{\underline{\ell}}=V^{\ell_1}\otimes\cdots\otimes V^{\ell_S}.
\]
These can alternatively be obtained from a fusion procedure applied to the Jackson integral
solutions when all $\ell_s=\frac{1}{2}$ derived earlier in \cite{RSV} (see Subsection \ref{fusionsubsection}). 
It seems though that the result for continuous spin $\ell_s\in\mathbb{C}$
(Theorem \ref{mrintro}) cannot be obtained from half-integer spins by analytic continuation.

\subsection{Outline of the paper}
In Section \ref{ic} and Section \ref{FusionSection-R}
we overview solutions to the quantum Yang-Baxter equation corresponding to quantum $\mathfrak{sl}_2$ and their fusion,
following \cite{KR,KRS, EFK}. Reflection equations and the fusion of $K$-operators are discussed in Section \ref{HSK-matrices}.
The boundary monodromy matrices defined in terms of these $R$- and $K$-operators are introduced in Section \ref{Bethevectors}, as are the off-shell Bethe vectors $\bigl( \prod_{i=1}^S\overline{\mathcal{B}}^{\xi_-}(x_i;\mathbf{t}) \bigr) \Omega$.
In Section \ref{sec:mr} we state and discuss the main theorem on the Jackson integral solutions of the boundary qKZ equations with continuous spins; its proof is given in Section \ref{sec:proof}. 
In Section \ref{sec:fusion2}, we show that the boundary qKZ equations \eqref{ReflqKZ} acting on $V^{\underline \ell}$ ($\ell_s\in\frac{1}{2}\mathbb{Z}_{\geq 0}$)
and the associated Jackson integral solutions of the 
boundary qKZ equations can be obtained from the special case when all $\ell_s=\frac{1}{2}$ by fusion.

\subsection{Acknowledgements} 
The research of N.R. was supported by Chern-Simons chair and by the NSF grant DMS-1201391; he also acknowledges support from the KdV Institute of the University of Amsterdam and from QGM at Aarhus, where an important part of the work has been done. 
J.S. and B.V. are grateful to the University of California for hospitality; the work of B.V. was supported by an NWO free competition grant.

\section{Quantum affine $\mathfrak{sl}_2$ and $R$-operators}\label{ic}
In this section we discuss basic facts on quantum affine $\mathfrak{sl}_2$ and its associated evaluation $R$-and $L$-operators, following \cite{FR,EFK}.
We use slightly different conventions compared to \cite{FR,EFK} in order to obtain a direct match with the $R$-and $L$-operators of the
$6$-vertex model (see Subsection \ref{Loperator}).

\subsection{Quantum affine algebra $\mathfrak{sl}_2$ and the universal $R$-matrix}
We fix $\eta\in\mathbb{C}$ such that $p:=\mathrm{e}^\eta$ is not a root of unity. We write $p^x:=\mathrm{e}^{\eta x}$ for $x\in\mathbb{C}$.

Set $\mathfrak{h}=\mathbb{C}h_0\oplus\mathbb{C}h_1$. Quantum affine $\mathfrak{sl}_2$ is the Hopf algebra  $\widehat{\mathcal{U}}_\eta:=
\mathcal{U}_\eta(\widehat{\mathfrak{sl}}_2)$ over $\mathbb{C}$
with generators $e_i, f_i$ ($i=0,1$), $p^{h}$ ($h\in\mathfrak{h}$) and with defining relations
\begin{gather*}
p^0=1,\qquad p^{h+h^\prime}=p^hp^{h^\prime},\\
p^{h}e_ip^{-h}=p^{\alpha_i(h)}e_i,\quad p^hf_ip^{-h}=p^{-\alpha_i(h)}f_i,
\quad \lbrack e_i,f_j\rbrack=\delta_{i,j}\frac{p^{h_i}-p^{-h_i}}{p-p^{-1}},\\
\begin{aligned} e_i^3e_j-(p^2+1+p^{-2})e_i^2e_je_i+(p^2+1+p^{-2})e_ie_je_i^2-e_je_i^3&=0,&& i\not=j,\\
f_i^3f_j-(p^2+1+p^{-2})f_i^2f_jf_i+(p^2+1+p^{-2})f_if_jf_i^2-f_jf_i^3&=0,&& i\not=j
\end{aligned}
\end{gather*}
for $i,j=0,1$ and $h,h^\prime\in\mathfrak{h}$.
Here $\alpha_i$ are linear functionals on $\mathfrak{h}$ satisfying $\alpha_j(h_i)=a_{ij}$ with Cartan matrix
\[
\left(\begin{matrix} a_{00} & a_{01}\\ a_{10} & a_{11}\end{matrix}\right)=\left(\begin{matrix} 2 & -2\\ -2 & 2\end{matrix}\right).
\]
The comultiplication $\Delta$ and the counit $\epsilon$ are determined by their
action on generators:
\begin{equation*}
\begin{split}
\Delta(p^h)&=p^h\otimes p^h,\\
\Delta(e_i)&=e_i\otimes 1+p^{-h_i}\otimes e_i,\\
\Delta(f_i)&=f_i\otimes p^{h_i}+1\otimes f_i
\end{split}
\end{equation*}
and
\begin{equation*}
\epsilon(p^h)=1,\quad \epsilon(e_i)=0,\quad \epsilon(f_i)=0.
\end{equation*}
The antipode is determined by $S(p^h)=p^{-h}$, $S(e_i)=-p^{h_i}e_i$ and $S(f_i)=-f_ip^{-h_i}$.

The extension $\widetilde{\mathcal{U}}_\eta$ of this algebra by generators $p^{\lambda d}$ ($\lambda\in\mathbb{C}$)
such that $[p^{\lambda d},p^{h}]=0$ and $p^{\lambda d}e_i=p^{\lambda\delta_{i,0}}e_ip^{\lambda d}$, $p^{\lambda d}f_i=
p^{-\lambda\delta_{i,0}}f_ip^{\lambda d}$
is a quantized Kac-Moody algebra. 
The corresponding Lie algebra has a non-degenerate scalar product and there is a universal $R$-matrix $R\in \widetilde{\mathcal{U}}_\eta\hat{\otimes} \widetilde{\mathcal{U}}_\eta$ \cite{Dr}. 
It has the form
\[
R=\exp(\eta (c\otimes d+d\otimes c))\mathcal{R}
\]
where $c=h_0+h_1$ and $\mathcal{R}\in\widehat{\mathcal{U}}_\eta\hat{\otimes}\widehat{\mathcal{U}}_\eta$.
In the category of modules where $c$ acts by zero (zero-level representations), the
element $\mathcal{R}$ satisfies all properties of the universal $R$-matrix:
\begin{gather*}
\mathcal{R}\Delta(a)=\Delta^\mathrm{op}(a)\mathcal{R}, \\
(\Delta\otimes \textup{Id})(\mathcal{R})=\mathcal{R}_{13}\mathcal{R}_{23}, \qquad (\textup{Id}\otimes \Delta)(\mathcal{R})=\mathcal{R}_{13}\mathcal{R}_{12}. 
\end{gather*}
Here, $\Delta^{op}$ the opposite comultiplication. See also \cite[Lecture 9]{EFK} for further details (note though that we have a different convention for
the comultiplication).

\subsection{Evaluation representations}

We write $\mathcal{U}_\eta\subset \widehat{\mathcal{U}}_\eta$ for the Hopf subalgebra generated by $e_1,f_1$ and $p^{\lambda h_1}$ ($\lambda\in\mathbb{C}$). 
It is the quantized universal enveloping algebra of $\mathfrak{sl}_2$.

Let $\ell\in\mathbb{C}$ and $M^\ell:=\bigoplus_{n=1}^{\infty}\mathbb{C}m_n^\ell$ be a left $\mathcal{U}_\eta$-module
with the action given by
\[ \begin{split}
\pi^\ell(p^{\lambda h_1})m_n^\ell&=p^{2\lambda (\ell+1-n)}m_n^\ell,\\
\pi^\ell(e_1)m_n^\ell&=\frac{\sinh((n-1)\eta)\sinh((2\ell+2-n)\eta)}{\sinh(\eta)^2}m_{n-1}^\ell,\\
\pi^\ell(f_1)m_n^\ell&=m_{n+1}^\ell,
\end{split} \]
where $m_0^\ell:=0$.
The $\mathcal{U}_\eta$-module $(\pi^\ell,M^\ell)$ is the Verma module
with highest weight $\ell$ and highest weight vector $m_1^\ell$.

If $k\in\frac{1}{2}\mathbb{Z}_{\geq 0}$ the subspace $N^k:=\bigoplus_{n=2k+2}^\infty\mathbb{C}m_n^k\subset M^k$ is a $\mathcal{U}_\eta$-submodule. 
We write $V^k:=M^k/N^k$ for the resulting quotient $\mathcal{U}_\eta$-module. The cosets $v_n^k:=m_n^k+N^k$ ($1\leq n\leq 2k+1$) form a weight basis in $V^k$.
The associated representation map will be denoted by $\overline{\pi}^k$ and for this representation of
$\mathcal{U}_\eta$ we will write $(\overline{\pi}^k, V^k)$.

For each $x\in\mathbb{C}$ there exists a unique unit-preserving algebra homomorphism $\phi_x: \widehat{\mathcal{U}}_\eta\rightarrow\mathcal{U}_\eta$ satisfying
\begin{align*}
\phi_x(p^{\lambda h_0})&=p^{-\lambda h_1},& \phi_x(p^{\lambda h_1})&=p^{\lambda h_1},\\
\phi_x(e_0)&=\mathrm{e}^{-x}f_1,& \phi_x(e_1)&=\mathrm{e}^{-x}e_1,\\
\phi_x(f_0)&=\mathrm{e}^xe_1,& \phi_x(f_1)&=\mathrm{e}^xf_1.
\end{align*}
Given a representation $\pi$ of $\mathcal{U}_\eta$ on $V$
we write $\pi_x:=\pi\circ\phi_x$, which turns $V$ in a 
representation of $\widehat{\mathcal{U}}_\eta$ called the evaluation representation.
Sometimes we will denote it by $V(x)$.

In what follows we will work with evaluation representation $(\overline{\pi}^k_x, V^k)$ and $(\pi^\ell_x, M^\ell)$,
where $k \in \frac{1}{2} \Z_{\geq 0}$ and $\ell\in \C$.

\subsection{Evaluation $R$-and $L$-operators} 

We follow here \cite[Lecture 9]{EFK}.
Fix $x,y\in\mathbb{C}$ with $\Re(x-y)\ll 0$. For $k,\ell\in\mathbb{C}$ the evaluation of the truncated universal $R$-matrix
\[
\bigl(\pi^k_x\otimes\pi^\ell_y\bigr)(\mathcal{R})
\]
is a linear operator on $M^k\otimes M^\ell$ which only depends on the difference $x-y$ of $x$ and $y$.
It acts on the tensor product of highest weight vectors as
\[
\bigl(\pi^k_x\otimes\pi^\ell_y)(\mathcal{R})m_1^k\otimes m_1^\ell=\alpha^{k\ell}(x-y)m_1^k\otimes m_1^\ell
\]
where $\alpha^{k\ell}(x-y)$ is invertible for generic $p$ and $x-y$. 
Define
\[
\mathcal{R}^{k\ell}(x-y):=\alpha^{k\ell}(x-y)^{-1}\bigl(\pi^k_x\otimes\pi^\ell_y\bigr)(\mathcal{R}).
\]

The operator $\mathcal{R}^{k\ell}(x-y)$ intertwines the action of $\widehat{\mathcal{U}}_\eta$ with its opposite:
\begin{equation}\label{intertwiningproperty}
\mathcal{R}^{k\ell}(x-y)(\pi^k_x\otimes\pi^\ell_y)(\Delta(X))=(\pi^k_x\otimes\pi^\ell_y)(\Delta^\mathrm{op}(X))\mathcal{R}^{k\ell}(x-y),\qquad
X\in\widehat{\mathcal{U}}_{\eta}
\end{equation}
and satisfies $\mathcal{R}^{k\ell}(x-y)m_1^k\otimes m_1^\ell=m_1^k\otimes m_1^\ell$. These properties determine $\mathcal{R}^{k\ell}(x-y)$
uniquely for generic values of $x-y$. 

The dependence of the operator $\mathcal{R}^{k\ell}(x-y)$ on $x,y,k,\ell$ is as a rational function in $\mathrm{e}^{x-y}, p^k$ and $p^\ell$.
Analytic continuation thus gives a well-defined linear operator $\mathcal{R}^{k\ell}(x-y)$ on $M^k\otimes M^\ell$ for generic values of $x-y$, which can be characterized by the same intertwining property \eqref{intertwiningproperty} with respect to the action of $\widehat{\mathcal{U}}_\eta$.

Let $k\in\frac{1}{2}\mathbb{Z}_{\geq 0}$ and write $\textup{pr}^k: M_k\twoheadrightarrow V_k$ for the canonical map. For each
$x\in\mathbb{C}$, it defines an intertwiner $\textup{pr}^k_x: M_k(x)\twoheadrightarrow V_k(x)$ of $\widehat{\mathcal{U}}_{\eta}$-modules.
Note that for $k\in\frac{1}{2}\mathbb{Z}_{\geq 0}$, there exists a unique linear map 
\[
L^{k\ell}(x-y): V^k\otimes M^\ell\rightarrow V^k\otimes M^\ell\]
depending rationally on $\mathrm{e}^{x-y}$ and satisfying
\[
(\textup{pr}^k\otimes\textup{Id}_{M^\ell})\mathcal{R}^{k\ell}(x-y)=L^{k\ell}(x-y)(\textup{pr}^k\otimes\textup{Id}_{M^\ell}).
\]
Similarly, for $k,\ell\in\frac{1}{2}\mathbb{Z}_{\geq 0}$, there exists a unique linear map 
\[
R^{k\ell}(x-y): V^k\otimes V^\ell\rightarrow
V^k\otimes V^\ell
\]
satisfying
\begin{equation}\label{finitered}
(\textup{pr}^k\otimes\textup{pr}^{\ell})\mathcal{R}^{k\ell}(x-y)=R^{k\ell}(x-y)(\textup{pr}^k\otimes\textup{pr}^{\ell}).
\end{equation}

\subsection{Basic properties of evaluation $R$-and $L$-operators}
We follow \cite{FR} and for details \cite[Lecture 9]{EFK}.

The basic properties of the universal $R$-matrix give the quantum Yang-Baxter equation
\begin{equation}
\mathcal{R}^{k\ell}_{12}(x-y)\mathcal{R}^{km}_{13}(x-z)\mathcal{R}^{\ell m}_{23}(y-z)=
\mathcal{R}^{\ell m}_{23}(y-z)\mathcal{R}^{km}_{13}(x-z)\mathcal{R}^{k\ell}_{12}(x-y)
\end{equation}
as linear operators on $M^k\otimes M^\ell\otimes M^m$. 
In addition, the operator $\mathcal{R}^{k\ell}(x-y)$ satisfies unitarity:
\[
\mathcal{R}^{k \ell}(x-y)^{-1}= \mathcal{R}^{\ell k}_{21}(y-x),
\]
where 
\[
\mathcal{R}^{\ell k}_{21}(x):=\mathcal{P}^{\ell k}\mathcal{R}^{\ell k}(x)\mathcal{P}^{k\ell}:
M^k\otimes M^{\ell}\rightarrow M^k\otimes M^{\ell}
\]
and $\mathcal{P}^{k\ell}: M^k\otimes M^\ell\rightarrow M^\ell\otimes M^k$ is the permutation operator.

Both properties descend naturally to the $L$-operators and finite $R$-operators. In particular, the familiar RLL-relations
\begin{equation}\label{RLL}
R^{k\ell}_{12}(x-y)L_{13}^{km}(x-z)L_{23}^{\ell m}(y-z)=
L_{23}^{\ell m}(y-z)L_{13}^{km}(x-z)R^{k\ell}_{12}(x-y)
\end{equation}
for $k,\ell\in\frac{1}{2}\mathbb{Z}_{\geq 0}$
as well as the quantum Yang-Baxter equation for the $R$-operators $R^{k\ell}(x)$ ($k,\ell\in\frac{1}{2}\mathbb{Z}_{\geq 0}$) follow immediately
from the quantum Yang-Baxter equation for $\mathcal{R}^{k\ell}$.

The next property of $\mathcal{R}^{k\ell}(x)$ is $P$-symmetry:
\begin{lem}\label{PsymmLem}
As linear maps on $M^k \otimes M^\ell$ we have for generic $x\in\mathbb{C}$,
\begin{equation}\label{Psymmetrytoprove}
\mathcal{R}^{\ell k}_{21}(x)=\mathcal{R}^{kl}(x).
\end{equation}
\end{lem}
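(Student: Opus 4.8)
The plan is to exploit the characterization of $\mathcal{R}^{k\ell}(x)$ by the intertwining property \eqref{intertwiningproperty} together with normalization on the highest weight vectors, and to show that $\mathcal{R}^{\ell k}_{21}(x)$ satisfies the very same characterizing properties. First I would record what $\mathcal{R}^{\ell k}_{21}(x) = \mathcal{P}^{\ell k}\mathcal{R}^{\ell k}(x)\mathcal{P}^{k\ell}$ does on the highest weight vector: since $\mathcal{R}^{\ell k}(x) m_1^\ell \otimes m_1^k = m_1^\ell \otimes m_1^k$ and $\mathcal{P}^{k\ell}(m_1^k \otimes m_1^\ell) = m_1^\ell \otimes m_1^k$, one gets $\mathcal{R}^{\ell k}_{21}(x) m_1^k \otimes m_1^\ell = m_1^k \otimes m_1^\ell$, which matches the normalization of $\mathcal{R}^{k\ell}(x)$. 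So the whole content is the intertwining property.

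Next I would verify that $\mathcal{R}^{\ell k}_{21}(x)$ intertwines $(\pi^k_x \otimes \pi^\ell_y)(\Delta(X))$ with $(\pi^k_x \otimes \pi^\ell_y)(\Delta^{\mathrm{op}}(X))$ for all $X \in \widehat{\mathcal{U}}_\eta$. Starting from the intertwining relation for $\mathcal{R}^{\ell k}(x)$ on $M^\ell \otimes M^k$, namely $\mathcal{R}^{\ell k}(x)(\pi^\ell_y \otimes \pi^k_x)(\Delta(X)) = (\pi^\ell_y \otimes \pi^k_x)(\Delta^{\mathrm{op}}(X))\mathcal{R}^{\ell k}(x)$, I would conjugate by $\mathcal{P}$. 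The key bookkeeping fact is that conjugation by the permutation operator swaps $\Delta$ and $\Delta^{\mathrm{op}}$: for any $Y \in \widehat{\mathcal{U}}_\eta \otimes \widehat{\mathcal{U}}_\eta$ acting on the relevant modules, $\mathcal{P}^{\ell k}(\pi^\ell_y \otimes \pi^k_x)(Y)\mathcal{P}^{k\ell} = (\pi^k_x \otimes \pi^\ell_y)(Y^{\mathrm{op}})$ where $Y^{\mathrm{op}}$ is $Y$ with its tensor legs swapped; in particular $(\Delta(X))^{\mathrm{op}} = \Delta^{\mathrm{op}}(X)$ and $(\Delta^{\mathrm{op}}(X))^{\mathrm{op}} = \Delta(X)$. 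Applying this to both sides of the conjugated relation turns it into exactly \eqref{intertwiningproperty} for $\mathcal{R}^{\ell k}_{21}(x)$ on $M^k \otimes M^\ell$.

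Having shown that $\mathcal{R}^{\ell k}_{21}(x)$ satisfies both the intertwining property \eqref{intertwiningproperty} and the normalization $m_1^k \otimes m_1^\ell \mapsto m_1^k \otimes m_1^\ell$, I invoke the uniqueness statement recorded just after \eqref{intertwiningproperty} (these properties determine the operator uniquely for generic $x$) to conclude $\mathcal{R}^{\ell k}_{21}(x) = \mathcal{R}^{k\ell}(x)$ for generic $x \in \mathbb{C}$, first for $k,\ell$ in the range where $\mathcal{R}$ comes from the genuine universal $R$-matrix and then, by the rationality in $\mathrm{e}^{x-y}, p^k, p^\ell$ noted in the text, for all $k,\ell \in \mathbb{C}$ by analytic continuation. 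The main obstacle is purely notational: carefully tracking which module sits in which tensor slot and confirming that conjugation by $\mathcal{P}$ genuinely exchanges $\Delta$ with $\Delta^{\mathrm{op}}$ at the level of the composed representation maps $\pi^k_x, \pi^\ell_y$ — once that is set up cleanly, the rest is immediate from uniqueness.
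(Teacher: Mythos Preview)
There is a genuine gap. Your conjugation-by-$\mathcal{P}$ argument, when the bookkeeping is done carefully, yields unitarity rather than $P$-symmetry. Start from the defining relation
\[
\mathcal{R}^{\ell k}(u-v)\,(\pi^\ell_u\otimes\pi^k_v)(\Delta(X))=(\pi^\ell_u\otimes\pi^k_v)(\Delta^{\mathrm{op}}(X))\,\mathcal{R}^{\ell k}(u-v)
\]
and conjugate by $\mathcal{P}$. Since $\mathcal{P}^{\ell k}(\pi^\ell_u\otimes\pi^k_v)(Y)\mathcal{P}^{k\ell}=(\pi^k_v\otimes\pi^\ell_u)(Y^{\mathrm{op}})$, you obtain
\[
\mathcal{R}^{\ell k}_{21}(u-v)\,(\pi^k_v\otimes\pi^\ell_u)(\Delta^{\mathrm{op}}(X))=(\pi^k_v\otimes\pi^\ell_u)(\Delta(X))\,\mathcal{R}^{\ell k}_{21}(u-v).
\]
This has $\Delta$ and $\Delta^{\mathrm{op}}$ on the \emph{wrong} sides compared with \eqref{intertwiningproperty}, and the representation is $(\pi^k_v\otimes\pi^\ell_u)$ with spectral difference $v-u$, not $u-v$. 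What it says is that $\mathcal{R}^{\ell k}_{21}(u-v)^{-1}$ satisfies the characterizing property of $\mathcal{R}^{k\ell}(v-u)$; by uniqueness this gives $\mathcal{R}^{\ell k}_{21}(z)^{-1}=\mathcal{R}^{k\ell}(-z)$, which is precisely the unitarity relation already recorded in the paper, not the identity $\mathcal{R}^{\ell k}_{21}(z)=\mathcal{R}^{k\ell}(z)$ you are after. Your notation, using $x$ simultaneously for the spectral argument of $\mathcal{R}^{\ell k}$ and for the evaluation parameter of $\pi^k$, obscures this sign mismatch.

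$P$-symmetry is not a formality of the permutation operator; it requires an extra input specific to these evaluation modules. The paper supplies it via identities such as
\[
(\pi_y^k\otimes\pi_x^\ell)(\Delta^{\mathrm{op}}(e_0))=(\pi_{-y}^k\otimes\pi_{-x}^\ell)(\Delta(f_1)),\qquad
(\pi_y^k\otimes\pi_x^\ell)(\Delta^{\mathrm{op}}(e_1))=(\pi_{-y}^k\otimes\pi_{-x}^\ell)(\Delta(f_0)),
\]
which come from the explicit form of the evaluation homomorphism $\phi_x$ (it interchanges $e_0\leftrightarrow f_1$ and $e_1\leftrightarrow f_0$ under $x\mapsto -x$, up to the displayed bookkeeping). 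These identities are what allow one to flip the sign of the spectral parameter and simultaneously exchange $\Delta$ with $\Delta^{\mathrm{op}}$, so that the resulting relation really is \eqref{intertwiningproperty} for $T^{k\ell}(x-y):=\mathcal{R}^{\ell k}_{21}(x-y)$ with the correct spectral difference. Your proposal is missing this step.
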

\begin{proof}
Write $T^{k\ell}(x)$ for the left-hand side of \eqref{Psymmetrytoprove}. Then clearly
\[
T^{k\ell}(x)m_1^k\otimes m_1^\ell=m_1^k\otimes m_1^\ell=\mathcal{R}^{k\ell}(x)m_1^k\otimes m_1^\ell.
\]
Hence it suffices to show that for generic $x$ and $y$,
\[
T^{k\ell}(x-y)(\pi_x^k\otimes\pi_y^\ell)(\Delta(X))=(\pi_x^k\otimes\pi_y^\ell)(\Delta^{op}(X))T^{k\ell}(x-y),\qquad
\forall\, X\in\widehat{\mathcal{U}}_{\mathrm{e}^\eta}.
\]
This is clear for $X=p^h$ ($h\in\mathfrak{h}$). For $X=e_0,f_1$ it is a direct consequence of the identity
\[
(\pi_y^k\otimes\pi_x^\ell)(\Delta^{op}(e_0))=(\pi_{-y}^k\otimes\pi_{-x}^\ell)(\Delta(f_1))
\]
and \eqref{intertwiningproperty}. For the algebraic generators $X=e_1,f_0$ it follows similarly from
\eqref{intertwiningproperty} using the fact that
\[
(\pi_y^k\otimes\pi_x^\ell)(\Delta^{op}(e_1))=(\pi_{-y}^k\otimes\pi_{-x}^\ell)(\Delta(f_0)). \qedhere
\]
\end{proof}
Finally we discuss crossing symmetry. 
We start with crossing symmetry for $L$-operators:
\begin{lem}\label{cslem}
Let $k\in\frac{1}{2}\mathbb{Z}_{\geq 0}$ and $\ell\in\mathbb{C}$.
Let $w^k: V^k\overset{\sim}{\longrightarrow} V^k$ be the linear isomorphism defined by
\[
w^k(v_n^k):=c_nv_{2k+2-n}^k
\]
with $c_n\in\mathbb{C}^\times$ determined by the recursion $c_{n+1}:=-c_np^{2k+1-2n}$ and $c_1:=1$. 
Then 
\[
L^{k\ell}(-x)^{T_1}=\alpha^{k\ell}(x)\alpha^{k\ell}(x-\eta)(w^k\otimes\textup{Id}_{M^\ell})L^{k\ell}(x-\eta)(w^k\otimes\textup{Id}_{M^\ell})^{-1}
\]
with $T_1$ the transpose in the first tensor component with respect to the weight basis.
\end{lem}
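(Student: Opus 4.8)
The plan is to verify the identity by the same ``uniqueness'' strategy that underlies the construction of $\mathcal{R}^{k\ell}(x)$ and $L^{k\ell}(x)$: both sides are intertwiners for suitably matched actions of (a quotient of) $\widehat{\mathcal{U}}_\eta$, so it suffices to check that they agree on the highest weight vector $v_1^k\otimes m_1^\ell$ and that they transform in the same way under the generators. First I would reformulate the desired equality as a statement about $\bigl(\overline{\pi}^k_x\otimes\pi^\ell_y\bigr)(\mathcal{R})$ itself, pushing the normalization factors $\alpha^{k\ell}$ through; since $L^{k\ell}(x-y)$ is the map induced by $\mathcal{R}^{k\ell}(x-y)$ via $\mathrm{pr}^k\otimes\mathrm{Id}$, the crossing identity for $L$ will follow from the corresponding identity for $\mathcal{R}^{k\ell}$ composed with $\mathrm{pr}^k\otimes\mathrm{Id}_{M^\ell}$, provided one knows that the transpose $T_1$ interacts well with $\mathrm{pr}^k$. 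This last point is where finite-dimensionality of $V^k$ is essential: the transpose in the first tensor leg only makes sense on $V^k$ (the weight basis is finite), which is exactly why the lemma is phrased for $L$-operators rather than for $\mathcal{R}^{k\ell}$ directly.

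The computational heart is the behaviour of the transpose under the intertwining relation \eqref{intertwiningproperty}. Applying $T_1$ to the relation
\[
\mathcal{R}^{k\ell}(x-y)\bigl(\pi^k_x\otimes\pi^\ell_y\bigr)(\Delta(X))=\bigl(\pi^k_x\otimes\pi^\ell_y\bigr)(\Delta^{\mathrm{op}}(X))\mathcal{R}^{k\ell}(x-y)
\]
and using $(AB)^{T_1}=B^{T_1}A^{T_1}$ in the first leg converts left intertwiners into right intertwiners, but now with $\pi^k_x$ replaced by its transpose-dual $(\pi^k_x)^{*}$ (the contragredient representation) in the first slot. The key representation-theoretic input is then: the contragredient of the evaluation module $V^k(x)$, twisted by the map $w^k$, is isomorphic to the evaluation module $V^k(x-\eta)$ (a shift by the ``Coxeter number''-type quantity $\eta$), with the isomorphism given precisely by $w^k$. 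I would prove this on generators $e_1,f_1,p^{\lambda h_1}$ of $\mathcal{U}_\eta$ by a direct check against the explicit formulas for $\overline{\pi}^k$, fixing the recursion $c_{n+1}=-c_n p^{2k+1-2n}$, $c_1=1$, so that $w^k$ indeed intertwines; the shift by $-\eta$ in the evaluation parameter arises from how the antipode/transpose acts on $p^{h_1}$ together with the grading element $h_0$ under $\phi_x$. With this intertwining property of $w^k$ in hand, $L^{k\ell}(-x)^{T_1}$ and $\alpha^{k\ell}(x)\alpha^{k\ell}(x-\eta)(w^k\otimes\mathrm{Id})L^{k\ell}(x-\eta)(w^k\otimes\mathrm{Id})^{-1}$ are seen to be intertwiners for the same pair of actions, hence proportional; the proportionality constant is pinned down by evaluating on $v_1^k\otimes m_1^\ell$, which accounts for the factor $\alpha^{k\ell}(x)\alpha^{k\ell}(x-\eta)$.

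The main obstacle, I expect, will be bookkeeping the normalization factors and the argument shifts consistently: one must track how the scalar $\alpha^{k\ell}$ (defined by the action on highest weight vectors) transforms under transpose and under the parameter change $x\mapsto -x$, $x\mapsto x-\eta$, and confirm that the product $\alpha^{k\ell}(x)\alpha^{k\ell}(x-\eta)$ is exactly what emerges — this requires the functional/unitarity relations for $\alpha^{k\ell}$ and the relation between $\mathcal{R}^{k\ell}$ and $\mathcal{R}^{\ell k}_{21}$ from $P$-symmetry (Lemma \ref{PsymmLem}) and unitarity. A secondary technical point is justifying that all identities, initially established for generic $x$ and for $k,\ell$ generic, persist after specializing $k\in\frac{1}{2}\mathbb{Z}_{\geq 0}$ and passing to the quotient $V^k$; this is handled by the rational dependence on $\mathrm{e}^{x-y}$, $p^k$, $p^\ell$ already invoked in the construction of $L^{k\ell}$, so it is routine but should be stated explicitly.
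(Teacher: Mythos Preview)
Your proposal is correct and shares the paper's key ingredient: the map $w^k$ realizes an isomorphism $V^k(x-\eta)\overset{\sim}{\longrightarrow} V^k(x)^{*}$ of $\widehat{\mathcal{U}}_\eta$-modules, which is exactly what the paper establishes and uses. The difference is one of packaging. You plan to take the transpose of the intertwining relation \eqref{intertwiningproperty}, deduce that $L^{k\ell}(-x)^{T_1}$ and $(w^k\otimes\textup{Id})L^{k\ell}(x-\eta)(w^k\otimes\textup{Id})^{-1}$ satisfy the same intertwining property, invoke uniqueness to get proportionality, and then fix the scalar by evaluation. The paper bypasses the uniqueness step entirely: it applies the universal Hopf-algebra identity $(S\otimes\textup{Id})(R)=R^{-1}$ to obtain
\[
\bigl((\overline{\pi}^k_x)^{*}\otimes\pi^\ell_y\bigr)(\mathcal{R})=\bigl((\overline{\pi}^k_x\otimes\pi^\ell_y)(\mathcal{R}^{-1})\bigr)^{t_1},
\]
and then simply substitutes the $w^k$-isomorphism. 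This produces the exact equality, including the factor $\alpha^{k\ell}(x)\alpha^{k\ell}(x-\eta)$, in one line, so the normalization bookkeeping you flag as the main obstacle (functional relations for $\alpha^{k\ell}$, checking on a highest weight vector) never arises. Your route works, but the paper's is shorter because it exploits the antipode identity for the universal $R$-matrix rather than re-deriving its consequence via Schur's lemma.
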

\begin{proof}
For an evaluation module $(\pi,V)$ over $\widehat{\mathcal{U}}_\eta$ we write $(\pi^*,V^*)$ for the graded dual $V^*$ of $V$ with respect to the weight grading, with $\widehat{\mathcal{U}}_\eta$-action $(\pi^*(X)\phi)(v):=\phi(\pi(S(X))v)$.
If $A: V\rightarrow V$ is a linear map, then we write $A^t: V^*\rightarrow V^*$ for the corresponding dual linear operator.

It follows from the identity $(S\otimes \textup{Id})(R)=R^{-1}$ that
\begin{equation}\label{crossingunitarity1}
\bigl((\overline{\pi}^k_x)^*\otimes\pi^\ell_y\bigr)(\mathcal{R})=\bigl((\overline{\pi}^k_x\otimes\pi^\ell_y)(\mathcal{R}^{-1})\bigr)^{t_1}.
\end{equation}
Here $t_1$ means taking the dual with respect to the first component in the tensor product. 
Write $\{(v_n^k)^*\}$ for the basis of $(V^k)^*$ dual to the weight basis $\{v_n^k\}_n$ of $V^k$. 
We identify $V^k\simeq (V^k)^*$ by $v_n^k\mapsto (v_n^k)^*$ (the dual $A^t$ of a linear operator $A: V^k\rightarrow V^k$ then corresponds to the
transpose $A^T$ of $A$ with respect to the weight basis $\{v_n^k\}$ of $V^k$). Accordingly we interpret the map
$w^k$ as a linear map $w^k: V^k\rightarrow (V^k)^*$, in which case it defines an isomorphism $V^k(x-\eta)\overset{\sim}{\longrightarrow} V^k(x)^*$ of 
$\widehat{\mathcal{U}}_{\eta}$-modules.
Consequently
\begin{equation*}
\begin{split}
L^{k\ell}(-x+y)^{T_1}&=\alpha^{k\ell}(x-y)\bigl(\overline{\pi}_x^k\otimes\pi_y^\ell\bigr)(\mathcal{R}^{-1})\bigr)^{T_1}\\
&=\alpha^{k\ell}(x-y)\bigl((\overline{\pi}_x^k)^*\otimes\pi^\ell_y\bigr)(\mathcal{R})\\
&=\alpha^{k\ell}(x-y)(w^k\otimes\textup{Id}_{M^\ell})(\overline{\pi}_{x-\eta}^k\otimes\pi_y^\ell)(\mathcal{R})
(w^k\otimes\textup{Id}_{M^\ell})^{-1},
\end{split}
\end{equation*}
where we have used \eqref{crossingunitarity1} for the second equality. This proves the desired result.
\end{proof}
\begin{rema}
For $k\in\mathbb{C}$ the canonical linear isomorphism $M^k\overset{\sim}{\longrightarrow} (M^k)^{**}$ defines an isomorphism
$M^k(x-2\eta)\overset{\sim}{\longrightarrow} M^k(x)^{**}$ of $\widehat{\mathcal{U}}_\eta$-modules (cf. Lemma \ref{cslem}). 
It then follows from a double application
of \eqref{crossingunitarity1} (for arbitrary evaluation modules) that
\[
\mathcal{R}^{k\ell}(x-2\eta)=\frac{\alpha^{k\ell}(x)}{\alpha^{k\ell}(x-2\eta)}\bigl(\bigl((\mathcal{R}^{k\ell}(x)^{-1})^{T_1}\bigr)^{-1}\bigr)^{T_1}.
\]
Note the difference with \cite[Prop. 9.5.2]{EFK}, which involves an additional conjugation by a diagonal operator in the first tensor component.
\end{rema}

\subsection{Explicit formulae for $L$-operators} \label{Loperator}
It is possible to compute $L^{\frac{1}{2}\ell}(x)$ explicitly using the expression of the universal $R$-matrix (a comprehensive survey of this can be found in \cite{BGKNR}). 
This leads to the formulae
\begin{equation*}
\begin{split}
L^{\frac{1}{2}\ell}(x)(v_1^{\frac{1}{2}}\otimes m_n^\ell)&=\frac{\sinh(x+(\frac{3}{2}+\ell-n)\eta)}{\sinh(x+(\frac{1}{2}+\ell)\eta)}v_1^{\frac{1}{2}}\otimes m_n^\ell\\
&+\mathrm{e}^{(\ell+\frac{3}{2}-n)\eta}\frac{\sinh((n-1)\eta)\sinh((2\ell+2-n)\eta)}{\sinh(\eta)\sinh(x+(\frac{1}{2}+\ell)\eta)}v_2^{\frac{1}{2}}\otimes m_{n-1}^\ell
\end{split}
\end{equation*}
and
\begin{equation*}
\begin{split}
L^{\frac{1}{2}\ell}(x)(v_2^{\frac{1}{2}}\otimes m_n^\ell)&=
\mathrm{e}^{(-\ell-\frac{1}{2}+n)\eta}\frac{\sinh(\eta)}{\sinh(x+(\ell+\frac{1}{2})\eta)}v_1^{\frac{1}{2}}\otimes m_{n+1}^\ell\\
&+\frac{\sinh(x+(-\frac{1}{2}-\ell+n)\eta)}{\sinh(x+(\frac{1}{2}+\ell)\eta)}v_2^{\frac{1}{2}}\otimes m_n^\ell.
\end{split}
\end{equation*}
Note that exponential factors can be removed by a similarity transformation.
After this, the result 
coincides with the $L$-operator found in \cite{KR}. It follows from these formulae that the finite
$R$-operator $R^{\frac{1}{2}\frac{1}{2}}(x)$ is the $6$-vertex $R$-operator:
\begin{equation}\label{6vertexR}
R^{\frac{1}{2}\frac{1}{2}}(x)
=\frac{1}{\sinh(x+\eta)}\left(\begin{matrix}
\sinh(x+\eta) & 0 & 0 & 0\\
0 & \sinh(x) & \sinh(\eta) & 0\\
0 & \sinh(\eta) & \sinh(x) & 0\\
0 & 0 & 0 & \sinh(x+\eta)\end{matrix}
\right)
\end{equation}
with respect to the ordered basis $(v_1^{\frac{1}{2}}\otimes v_1^{\frac{1}{2}},
v_1^{\frac{1}{2}}\otimes v_2^{\frac{1}{2}}, v_2^{\frac{1}{2}}\otimes v_1^{\frac{1}{2}},
v_2^{\frac{1}{2}}\otimes v_2^{\frac{1}{2}})$ of $V^{\frac{1}{2}}\otimes V^{\frac{1}{2}}$.

The crossing symmetry of the 
$L$-operators (Lemma \ref{cslem}) becomes
\begin{equation}\label{crossingsymmetry}
L^{\frac{1}{2}\ell}(-x)^{T_1}=\vartheta^\ell(x)\sigma^y_1 L^{\frac{1}{2}\ell}(x-\eta)\sigma^y_1
\end{equation}
as linear operators on $V^{\frac{1}{2}}\otimes M^{\ell}$, where $T_1$ is the matrix transpose with respect to the weight basis in $V^{\frac{1}{2}}$ and
\begin{equation}\label{varthetal}
\sigma^y:=\left(\begin{matrix} 0 & -\sqrt{-1}\\ \sqrt{-1} & 0\end{matrix}\right), \ \ \vartheta^\ell(x)=\frac{\sinh(x-(\frac{1}{2}-\ell)\eta)}{\sinh(x-(\frac{1}{2}+\ell)\eta)}.
\end{equation}
Formula \eqref{crossingsymmetry} can be directly verified using the above explicit formulae for $L^{\frac{1}{2} \ell}(x)$.

\section{Fusion of $R$-operators}\label{FusionSection-R}
We use the notations from Section \ref{ic}. Fix a generic $\eta\in\mathbb{C}$ throughout this section and write
$p=\mathrm{e}^\eta$.
\subsection{Tensor products of evaluation representations}
Let $k,\ell\in\frac{1}{2}\mathbb{Z}_{\geq 0}$. By \cite[Thm. 4.8]{CP} the tensor product
$\widehat{\mathcal{U}}_{\eta}$-module $V^k(x)\otimes V^\ell(y)$ is irreducible for generic $x,y\in\mathbb{C}$.
For the fusion of $R$- and $K$-operators we need to focus on the special cases that the
$\widehat{\mathcal{U}}_{\eta}$-module $V^k(x)\otimes V^\ell(y)$ is
reducible.

For $k,\ell\in\frac{1}{2}\mathbb{Z}_{\geq 0}$ we write $P^{k\ell}: V^k\otimes V^\ell\rightarrow
V^\ell\otimes V^k$ for the permutation operator.
The following result should be compared with \cite[Prop. 4.9]{CP}. The proof is by a straightforward computation.
\begin{prop}\label{intertwiners}
Let $k\in\frac{1}{2}\mathbb{Z}_{\geq 0}$. \\
{\bf (i)} The linear map $\iota^k: V^{k+\frac{1}{2}}\hookrightarrow V^{\frac{1}{2}}\otimes V^{k}$, defined by
\[
\iota^k\bigl(v_n^{k+\frac{1}{2}}\bigr)=\mathrm{e}^{\frac{\eta}{2}(n-1)}v_1^{\frac{1}{2}}\otimes v_n^k+\mathrm{e}^{-\frac{\eta}{2}(n-2-2k)}\frac{\sinh((n-1)\eta)}{\sinh(\eta)}
v_2^{\frac{1}{2}}\otimes v_{n-1}^k,
\]
defines a $\widehat{\mathcal{U}}_{\eta}$-intertwiner
$\iota_x^k: V^{k+\frac{1}{2}}(x)\hookrightarrow V^{\frac{1}{2}}(x-k\eta)\otimes V^{k}(x+\frac{\eta}{2})$.\\
{\bf(ii)} The linear map
$j^k:=P^{\frac{1}{2}k}\iota^k: V^{k+\frac{1}{2}}\hookrightarrow V^{k}\otimes V^{\frac{1}{2}}$
defines
a $\widehat{\mathcal{U}}_{\eta}$-intertwiner
\[ j^k_x: V^{k+\frac{1}{2}}(x)\hookrightarrow V^{k}(x-\frac{\eta}{2})\otimes V^{\frac{1}{2}}(x+k\eta).
\]
\end{prop}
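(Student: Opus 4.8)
The plan is to prove part (i) directly by checking that the explicitly given map $\iota^k$ intertwines the action of the generators $e_0, e_1, f_0, f_1$ and $p^{\lambda h_0}, p^{\lambda h_1}$ on $V^{k+\frac12}(x)$ with their action on $V^{\frac12}(x-k\eta) \otimes V^{k}(x+\frac{\eta}{2})$; part (ii) is then an immediate corollary. First I would record the weight-lowering structure: $\iota^k(v_n^{k+\frac12})$ lies in the weight space of $V^{\frac12}\otimes V^k$ of weight $p^{2(k+\frac12+1-n)}$, so the $p^{\lambda h_1}$-intertwining is automatic from the displayed formula once one checks the exponents match, and the $p^{\lambda h_0}$-intertwining follows since $\phi_x(p^{\lambda h_0}) = p^{-\lambda h_1}$ reduces it to the $h_1$ case. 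The substantive work is the action of $e_0, e_1, f_0, f_1$.

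Next I would handle $f_1$ and $e_1$, which act through $\mathcal{U}_\eta$ alone (independently of the evaluation point). On $V^{k+\frac12}$ one has $\overline{\pi}^{k+\frac12}(f_1)v_n^{k+\frac12} = v_{n+1}^{k+\frac12}$, while on the tensor product $\Delta(f_1) = f_1 \otimes p^{h_1} + 1 \otimes f_1$ acts; applying this to the two-term expression for $\iota^k(v_n^{k+\frac12})$ and using the explicit $\overline{\pi}^{\frac12}, \overline{\pi}^k$ actions, one must verify the result equals $\iota^k(v_{n+1}^{k+\frac12})$. This is a finite computation with $\sinh$ and exponential factors; the recursion $\frac{\sinh(n\eta)}{\sinh\eta}$ coefficients and the precise exponential prefactors $\mathrm{e}^{\pm\frac{\eta}{2}(\cdots)}$ are engineered exactly so this works, and similarly for $\Delta(e_1) = e_1 \otimes 1 + p^{-h_1}\otimes e_1$ against $\overline{\pi}^{k+\frac12}(e_1)v_n^{k+\frac12} = \frac{\sinh((n-1)\eta)\sinh((2k+2-n)\eta)}{\sinh(\eta)^2}v_{n-1}^{k+\frac12}$.

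Then comes the genuinely new content: the affine generators $e_0, f_0$. Here the evaluation points enter through $\phi_x$: on $V^{k+\frac12}(x)$ we have $\pi^{k+\frac12}_x(e_0) = \mathrm{e}^{-x}\overline{\pi}^{k+\frac12}(f_1)$ and $\pi^{k+\frac12}_x(f_0) = \mathrm{e}^{x}\overline{\pi}^{k+\frac12}(e_1)$, while on the tensor product the comultiplication $\Delta(e_0) = e_0 \otimes 1 + p^{-h_0}\otimes e_0$ acts with the first factor evaluated at $x - k\eta$ and the second at $x + \frac{\eta}{2}$, so one gets $\mathrm{e}^{-(x-k\eta)}(\cdots) + \mathrm{e}^{-(x+\frac{\eta}{2})}(\cdots)$ with $f_1$'s and the $p^{-h_0}=p^{h_1}$ weight factor. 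The relative shift $(x-k\eta)$ versus $(x+\frac{\eta}{2})$ — a net difference of $(k+\frac12)\eta$ — is precisely what forces the choice of those two evaluation points; I would verify that with this choice the cross-terms and the exponential prefactors in $\iota^k$ combine to give $\mathrm{e}^{-x}$ times $\iota^k$ applied to the correct basis vector, matching $\pi^{k+\frac12}_x(e_0)$. The case $f_0$ is analogous via $\Delta(f_0) = f_0 \otimes p^{h_0} + 1 \otimes f_0$.

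The main obstacle I anticipate is purely bookkeeping rather than conceptual: keeping track of the exponential prefactors $\mathrm{e}^{\pm\frac{\eta}{2}(\cdots)}$ together with the weight-operator factors $p^{\pm h_i}$ produced by the comultiplication, and checking that in the $e_0, f_0$ verifications the $\sinh$-coefficients telescope correctly — this uses the identity $\sinh((n-1)\eta)\sinh((2k+2-n)\eta)$ expansions and addition formulas for $\sinh$, and it is exactly the place where the specific evaluation-point shifts $-k\eta$ and $+\frac{\eta}{2}$ are pinned down. Once (i) is established, (ii) follows because $P^{\frac12 k}$ is a $\widehat{\mathcal{U}}_\eta$-intertwiner from $V^{\frac12}(x-k\eta)\otimes V^k(x+\frac{\eta}{2})$ to $V^k(x+\frac{\eta}{2})\otimes V^{\frac12}(x-k\eta)$ — wait, $P$ is not an intertwiner by itself; rather $j^k_x = P^{\frac12 k}\iota^k_x$ is an intertwiner into $V^k(x+\frac{\eta}{2})\otimes V^{\frac12}(x-k\eta)$ only after re-indexing, so one should instead argue that $P^{\frac12 k}$ composed with $\iota^k$ gives an intertwiner into the module with the two tensor factors swapped and re-identify the evaluation parameters, i.e. $j^k_x: V^{k+\frac12}(x) \hookrightarrow V^k(x-\frac{\eta}{2})\otimes V^{\frac12}(x+k\eta)$ by applying $\iota^k$ at shifted base point — alternatively one simply repeats the generator check for $j^k$ directly, which is no harder than for $\iota^k$.
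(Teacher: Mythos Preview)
Your proposal is correct and matches the paper's approach: the paper's entire proof is the single sentence ``The proof is by a straightforward computation,'' which is precisely the generator-by-generator verification you outline. Your self-correction in part~(ii) is apt --- $P^{\frac12 k}$ intertwines $\Delta$ with $\Delta^{\mathrm{op}}$, not with $\Delta$, so it does not directly transport the intertwining property of $\iota^k_x$ to the stated target; the clean route is exactly your final alternative, namely to repeat the direct generator check for $j^k$ with the shifted evaluation points $x-\frac{\eta}{2}$ and $x+k\eta$.
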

Note that the intertwiners $\iota_x^k$ and $j_x^k$ do not depend on $x$ as linear maps. We add the subscript $x$ to
clarify the $\widehat{\mathcal{U}}_{\eta}$-action we are considering.

\subsection{Fusion operators}

It follows from Lemma \ref{PsymmLem} that the $R$-operators $R^{k\ell}(x)$ ($k,\ell\in\frac{1}{2}\mathbb{Z}_{\geq 0}$) are $P$-symmetric. 
In the remainder of this section we focus on the fusion of the $R$-operators $R^{k\ell}(x)$ ($k,\ell\in\frac{1}{2}\mathbb{Z}_{\geq 0}$).

For the fusion of the $R$-operators the interpretation of $R$-operators as intertwiners
between tensor products of evaluation modules plays a crucial role. We need explicit expressions
for its action in case that the tensor product of the evaluation modules is reducible.
\begin{lem}\label{link}
For $k\in\frac{1}{2}\mathbb{Z}_{\geq 0}$ the linear operators $R^{\frac{1}{2}k}(x)$ and
$R^{k\frac{1}{2}}(x)$ are regular at $x=(k+\frac{1}{2})\eta$. The resulting linear maps
$S^k:=P^{\frac{1}{2}k}R^{\frac{1}{2}k}\bigl((k+\frac{1}{2})\eta\bigr)$ and
$T^k:=P^{k\frac{1}{2}}R^{k\frac{1}{2}}\bigl((k+\frac{1}{2})\eta\bigr)$, which we will view as
$\widehat{\mathcal{U}}_{\eta}$-intertwiners
\begin{equation*}
\begin{split}
S_x^k: V^{\frac{1}{2}}(\mathrm{e}^{x+k\eta})\otimes V^k(\mathrm{e}^{x-\frac{\eta}{2}})&\rightarrow V^k(\mathrm{e}^{x-\frac{\eta}{2}})\otimes V^{\frac{1}{2}}(\mathrm{e}^{x+k\eta}),\\
T_x^k: V^k(\mathrm{e}^{x+\frac{\eta}{2}})\otimes V^{\frac{1}{2}}(\mathrm{e}^{x-k\eta})&\rightarrow
V^{\frac{1}{2}}(\mathrm{e}^{x-k\eta})\otimes V^k(\mathrm{e}^{x+\frac{\eta}{2}})
\end{split}
\end{equation*}
are explicitly given by
\begin{equation*}
\begin{split}
S^k(v_1^{\frac{1}{2}}\otimes v_n^k)&=\frac{\sinh((2k+2-n)\eta)}{\sinh((2k+1)\eta)}
\mathrm{e}^{-\frac{\eta}{2}(n-1)}j^k(v_n^{k+\frac{1}{2}}),\\
S^k(v_2^{\frac{1}{2}}\otimes v_n^k)&=\frac{\sinh(\eta)}{\sinh((2k+1)\eta)}
\mathrm{e}^{\frac{\eta}{2}(n-2k-1)}j^k(v_{n+1}^{k+\frac{1}{2}}).
\end{split}
\end{equation*}
\begin{equation*}
\begin{split}
T^k(v_n^k\otimes v_1^{\frac{1}{2}})&=\frac{\sinh((2k+2-n)\eta)}{\sinh((2k+1)\eta)}
\mathrm{e}^{-\frac{\eta}{2}(n-1)}\iota^k(v_n^{k+\frac{1}{2}}),\\
T^k(v_n^k\otimes v_2^{\frac{1}{2}})&=\frac{\sinh(\eta)}{\sinh((2k+1)\eta)}
\mathrm{e}^{\frac{\eta}{2}(n-2k-1)}\iota^k(v_{n+1}^{k+\frac{1}{2}}).
\end{split}
\end{equation*}
\end{lem}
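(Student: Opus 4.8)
The plan is to reduce everything to the explicit formulae for $L^{\frac{1}{2}\ell}(x)$ recorded in Subsection \ref{Loperator}, specialised to $\ell=k\in\frac{1}{2}\mathbb{Z}_{\geq0}$. First I would settle regularity. By \eqref{finitered} and the analogous relation defining $L^{\frac{1}{2}k}$, the operator $R^{\frac{1}{2}k}(x)$ on $V^{\frac{1}{2}}\otimes V^{k}$ is induced by $L^{\frac{1}{2}k}(x)$ after applying $\textup{pr}^{k}$ in the second tensor leg, so it suffices to note that $L^{\frac{1}{2}k}(x)$ is regular at $x=(k+\frac{1}{2})\eta$: the only singularities of $L^{\frac{1}{2}\ell}(x)$ in $\mathrm{e}^{x}$ come from the factor $\sinh(x+(\tfrac{1}{2}+\ell)\eta)^{-1}$, which for $\ell=k$ equals $\sinh((2k+1)\eta)^{-1}$ at $x=(k+\frac{1}{2})\eta$, finite and nonzero for generic $\eta$. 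For $R^{k\frac{1}{2}}(x)$ I would invoke the $P$-symmetry of Lemma \ref{PsymmLem}, which descends through \eqref{finitered} to the identity $R^{k\frac{1}{2}}(x)=P^{\frac{1}{2}k}R^{\frac{1}{2}k}(x)P^{k\frac{1}{2}}$; this yields regularity of $R^{k\frac{1}{2}}$ at the same point and the relation $T^{k}=P^{k\frac{1}{2}}S^{k}P^{k\frac{1}{2}}$, which I will use at the end to transfer the formulae.

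Next, the intertwining claim. Descending \eqref{intertwiningproperty} via \eqref{finitered}, $R^{\frac{1}{2}k}(x-y)$ intertwines $(\overline{\pi}^{\frac{1}{2}}_{x}\otimes\overline{\pi}^{k}_{y})(\Delta(X))$ with $(\overline{\pi}^{\frac{1}{2}}_{x}\otimes\overline{\pi}^{k}_{y})(\Delta^{\mathrm{op}}(X))$; composing with $P^{\frac{1}{2}k}$ converts $\Delta^{\mathrm{op}}$ into $\Delta$ in the swapped order, and choosing $x-y=(k+\tfrac{1}{2})\eta$ (so that $(x+k\eta)-(x-\tfrac{\eta}{2})=(k+\tfrac{1}{2})\eta$) identifies $S^{k}$ as a $\widehat{\mathcal{U}}_{\eta}$-intertwiner between the displayed modules, and similarly $T^{k}$. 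This already constrains the shape of the answer: the source of $S^{k}$ is reducible with irreducible quotient $V^{k+\frac{1}{2}}$, while by Proposition \ref{intertwiners}(ii) the target contains the submodule $j^{k}_{x}\bigl(V^{k+\frac{1}{2}}(x)\bigr)$, so any nonzero intertwiner must land inside this submodule; hence each $S^{k}(v_{i}^{\frac{1}{2}}\otimes v_{n}^{k})$ is forced to be a scalar multiple of some $j^{k}(v_{m}^{k+\frac{1}{2}})$, and only the scalar remains to be determined.

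That scalar comes out of a direct computation, which I regard as the routine part. I would specialise the two displayed formulae for $L^{\frac{1}{2}\ell}(x)$ to $\ell=k$ and $x=(k+\tfrac{1}{2})\eta$, pass to $V^{k}$ in the second leg (where $v_{n}^{k}:=0$ unless $1\leq n\leq 2k+1$; the formulae stay consistent thanks to the factors $\sinh((n-1)\eta)$ and $\sinh((2k+2-n)\eta)$ occurring in $L^{\frac{1}{2}k}$), apply $P^{\frac{1}{2}k}$, and match the two resulting components against $j^{k}(v_{m}^{k+\frac{1}{2}})=P^{\frac{1}{2}k}\iota^{k}(v_{m}^{k+\frac{1}{2}})$ from Proposition \ref{intertwiners}; here the identity $\mathrm{e}^{-\frac{\eta}{2}(n-1)}\mathrm{e}^{-\frac{\eta}{2}(n-2-2k)}=\mathrm{e}^{(k+\frac{3}{2}-n)\eta}$ reconciles the exponential prefactors. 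This gives the asserted formulae for $S^{k}$; applying $P^{k\frac{1}{2}}(-)P^{k\frac{1}{2}}$ and using $P^{k\frac{1}{2}}j^{k}=\iota^{k}$ together with $T^{k}=P^{k\frac{1}{2}}S^{k}P^{k\frac{1}{2}}$ then yields the formulae for $T^{k}$. I do not expect a genuine obstacle; the only real care is in bookkeeping the exponential prefactors and in keeping the two fusion limits — for $V^{\frac{1}{2}}\otimes V^{k}$ versus $V^{k}\otimes V^{\frac{1}{2}}$, i.e. the roles of $\iota^{k}$ and $j^{k}$ — apart.
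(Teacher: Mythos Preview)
Your proposal is correct and follows essentially the same route as the paper: reduce $T^{k}$ to $S^{k}$ via $P$-symmetry and $\iota^{k}=P^{k\frac{1}{2}}j^{k}$, pass from $R^{\frac{1}{2}k}$ to the explicit $L^{\frac{1}{2}k}$ via the projection relation, specialise at $x=(k+\tfrac{1}{2})\eta$, and compare with the explicit formula for $j^{k}$ from Proposition~\ref{intertwiners}. Your additional representation-theoretic remark (that the image is forced into $j^{k}(V^{k+\frac{1}{2}})$) is a helpful heuristic but not needed, since you carry out the direct computation anyway; the paper simply omits it.
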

\begin{proof}
By $P$-symmetry we have $R^{k\frac{1}{2}}(x)=P^{\frac{1}{2}k}R^{\frac{1}{2}k}(x)P^{k\frac{1}{2}}$,
and Proposition \ref{intertwiners} gives
$\iota^k=P^{k\frac{1}{2}}j^k$. So it suffices to prove the statement for $S^k$. Using the fact that
$(\textup{Id}_{V^{\frac{1}{2}}}\otimes \textup{pr}^k)L^{\frac{1}{2}k}(x)=R^{\frac{1}{2}k}(x)(\textup{Id}_{V^{\frac{1}{2}}}\otimes\textup{pr}^k)$, Remark \ref{Loperator} gives explicit formulae for $S^k$.
Comparing those formulae with the explicit formulae for $j_x^k$ (see Proposition \ref{intertwiners}) now leads to the desired result.
\end{proof}

\subsection{The fusion formula for the $R$- and $L$-operators}
The fusion formulae for the $R$-operators $R^{k\ell}(x)$ ($k,\ell\in\frac{1}{2}\mathbb{Z}_{\geq 0}$) and $L$-operators
$L^{k\ell}(x)$ ($k\in\frac{1}{2}\mathbb{Z}_{\geq 0}$, $\ell\in\mathbb{C}$)
follow directly from the re\-pre\-sen\-ta\-tion-theoretic considerations of
the previous subsection. Recall the linear map $\iota^k: V^{k+\frac{1}{2}}\hookrightarrow V^{\frac{1}{2}}\otimes V^k$ from
Proposition \ref{intertwiners}.

\begin{prop}\label{FusionL}
For $k\in\frac{1}{2}\mathbb{Z}_{\geq 0}$ and $\ell\in\mathbb{C}$ we have the fusion formula
\[
(\iota^k\otimes \textup{Id}_{M^\ell})L^{k+\frac{1}{2},\ell}(x-y)=
L_{13}^{\frac{1}{2}\ell}(x-k\eta-y)L_{23}^{k\ell}(x+\frac{\eta}{2}-y)(\iota^k\otimes\textup{Id}_{M^\ell})
\]
as linear maps $V^{k+\frac{1}{2}}\otimes M^\ell\rightarrow V^{\frac{1}{2}}\otimes V^k\otimes M^\ell$.
\end{prop}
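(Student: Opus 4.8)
The plan is to derive the fusion formula for the $L$-operators as a direct consequence of the intertwining property of $L^{k+\frac12,\ell}(x-y)$ together with the factorization of the intertwiner $\iota^k$ built in Proposition \ref{intertwiners}. Recall that $L^{k'\ell}(x-y)$ is characterized (for generic $x-y$) as the unique linear map $V^{k'}\otimes M^\ell\to V^{k'}\otimes M^\ell$ depending rationally on $\mathrm{e}^{x-y}$ that intertwines $(\overline\pi^{k'}_x\otimes\pi^\ell_y)\circ\Delta$ with $(\overline\pi^{k'}_x\otimes\pi^\ell_y)\circ\Delta^{\mathrm{op}}$ and fixes $v_1^{k'}\otimes m_1^\ell$; this descends from the corresponding characterization of $\mathcal{R}^{k'\ell}(x-y)$ via the projection $\mathrm{pr}^{k'}$, using that $\mathrm{pr}^{k'}\otimes\mathrm{Id}$ is surjective.

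First I would set $x' := x - k\eta$ and $y' := x + \tfrac{\eta}{2}$, so that by Proposition \ref{intertwiners}(i) the map $\iota^k$ is a $\widehat{\mathcal U}_\eta$-intertwiner $\iota^k_x: V^{k+\frac12}(x)\hookrightarrow V^{\frac12}(x')\otimes V^k(y')$. Tensoring on the right with $\mathrm{Id}_{M^\ell(y)}$, the map $\iota^k\otimes\mathrm{Id}_{M^\ell}$ intertwines the $\widehat{\mathcal U}_\eta$-action $(\overline\pi^{k+\frac12}_x\otimes\pi^\ell_y)\circ\Delta^{(2)}$ on $V^{k+\frac12}\otimes M^\ell$ with the action $(\overline\pi^{\frac12}_{x'}\otimes\overline\pi^k_{y'}\otimes\pi^\ell_y)\circ\Delta^{(3)}$ on $V^{\frac12}\otimes V^k\otimes M^\ell$, for the appropriate iterated coproducts (this is just coassociativity of $\Delta$). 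Next I would observe that the right-hand side operator $L_{13}^{\frac12\ell}(x'-y)L_{23}^{k\ell}(y'-y)$ intertwines the opposite-flavored actions: by the intertwining property \eqref{intertwiningproperty} of each $L$-factor (descended from $\mathcal R$), $L_{23}^{k\ell}(y'-y)$ swaps the $23$-coproduct for its opposite while leaving the first leg untouched, and then $L_{13}^{\frac12\ell}(x'-y)$ swaps the remaining $1{-}(23)$ coproduct for its opposite; composing, one moves the full iterated coproduct to its fully reversed version. I would spell out this two-step "moving the $R$-matrix past the coproduct" computation carefully — tracking which tensor legs carry which evaluation parameter is the one place where a sign or shift error can creep in, so this is the step I'd be most careful about.

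Combining the last two paragraphs, both $(\iota^k\otimes\mathrm{Id})\, L^{k+\frac12,\ell}(x-y)$ and $L_{13}^{\frac12\ell}(x'-y)L_{23}^{k\ell}(y'-y)\,(\iota^k\otimes\mathrm{Id})$ are linear maps $V^{k+\frac12}\otimes M^\ell\to V^{\frac12}\otimes V^k\otimes M^\ell$ that intertwine the same pair of $\widehat{\mathcal U}_\eta$-actions and that depend rationally on $\mathrm{e}^{x-y}$. To conclude they coincide I would invoke a Schur-type uniqueness statement: the source $V^{k+\frac12}(x)\otimes M^\ell(y)$ has one-dimensional top weight space (spanned by $v_1^{k+\frac12}\otimes m_1^\ell$), so an intertwiner out of it into a fixed target is determined by the image of that vector. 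Hence it suffices to check that both sides send $v_1^{k+\frac12}\otimes m_1^\ell$ to the same vector: on the left, $L^{k+\frac12,\ell}(x-y)$ fixes $v_1^{k+\frac12}\otimes m_1^\ell$ and $\iota^k(v_1^{k+\frac12})=v_1^{\frac12}\otimes v_1^k$, giving $v_1^{\frac12}\otimes v_1^k\otimes m_1^\ell$; on the right, $L_{23}^{k\ell}$ and then $L_{13}^{\frac12\ell}$ each fix the relevant highest-weight tensors, giving the same vector. Since both sides are $\widehat{\mathcal U}_\eta$-intertwiners agreeing on a cyclic (top-weight) vector, they are equal for generic $x-y$, and by rationality in $\mathrm{e}^{x-y}$ for all generic values; this is exactly the claimed fusion formula. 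The main obstacle, as noted, is purely bookkeeping: getting the evaluation-parameter shifts $x-k\eta$ and $x+\tfrac{\eta}{2}$ to line up with the shifts dictated by $\iota^k_x$ in Proposition \ref{intertwiners}, but no genuine difficulty beyond that.
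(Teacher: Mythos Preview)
Your approach is correct and yields the result, but it differs from the paper's argument. The paper exploits the quasi-triangular axiom $(\Delta\otimes\textup{Id})(\mathcal{R})=\mathcal{R}_{13}\mathcal{R}_{23}$ directly: evaluating both sides on $V^{\frac12}(x-k\eta)\otimes V^k(x+\tfrac{\eta}{2})\otimes M^\ell(y)$ identifies $L_{13}^{\frac12\ell}L_{23}^{k\ell}$ (up to normalization) with the image of $\mathcal{R}$ under $(\Delta\otimes\textup{Id})$, and then the module-map property of $\iota^k_x$ lets one pull this back to $(\overline\pi^{k+\frac12}_x\otimes\pi^\ell_y)(\mathcal{R})$ composed with $\iota^k\otimes\textup{Id}$. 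No Schur-type uniqueness is needed; the identity is an immediate consequence of a Hopf-algebra axiom plus one intertwiner.

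Your route instead works entirely at the representation level: you verify that both sides are $\widehat{\mathcal{U}}_\eta$-intertwiners for the same source/target actions and then match them on the top weight vector. This is a valid alternative, and arguably more self-contained since it avoids appealing to the universal element $\mathcal{R}$. Two small remarks: (i) the target action you reach is $X\mapsto X_{(2)}\otimes X_{(3)}\otimes X_{(1)}$ rather than the ``fully reversed'' $X_{(3)}\otimes X_{(2)}\otimes X_{(1)}$ --- this does not affect the argument since what matters is that both sides land in the \emph{same} target action, which they do; (ii) your uniqueness step rests on $V^{k+\frac12}(x)\otimes M^\ell(y)$ being cyclic on $v_1^{k+\frac12}\otimes m_1^\ell$ for generic $x-y$, which is true but perhaps deserves a word of justification. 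The paper's approach sidesteps both of these points by using the Hopf-algebraic identity, making the proof essentially a one-liner.
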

\begin{proof}
Using the fact that
\[
\bigl(\overline{\pi}_x^{\frac{1}{2}}\otimes\overline{\pi}_y^k\otimes\pi_z^\ell\bigr)(\mathcal{R}_{13}\mathcal{R}_{23})=
\bigl(\overline{\pi}_x^{\frac{1}{2}}\otimes\overline{\pi}_y^k\otimes\pi_z^\ell\bigr)((\Delta\otimes\textup{Id})(\mathcal{R}))
\]
and the intertwining property of $\iota_x^k$ (see Proposition \ref{intertwiners}), gives
\[
L_{13}^{\frac{1}{2}\ell}(x-k\eta-y)L_{23}^{k\ell}(x+\frac{\eta}{2}-y)(\iota_x^k\otimes\textup{Id}_{M^\ell})
=
(\iota_x^k\otimes\textup{Id}_{M^\ell})L^{k+\frac{1}{2},\ell}(x-y)
\]
as linear maps $V^{k+\frac{1}{2}}(x)\otimes M^\ell(y)\rightarrow
V^{\frac{1}{2}}(x-k\eta)\otimes V^k(x+\frac{\eta}{2})\otimes M^\ell(y)$.
The result follows now immediately.
\end{proof}
\begin{rema}
Proposition \ref{FusionL} leads for $k,\ell\in\frac{1}{2}\mathbb{Z}_{\geq 0}$ to the fusion formula
\[
(\iota^k\otimes\textup{Id}_{V^\ell})R^{k+\frac{1}{2},\ell}(x-y)=
R_{13}^{\frac{1}{2}\ell}(x-k\eta-y)R_{23}^{k\ell}(x+\frac{\eta}{2}-y)(\iota^k\otimes\textup{Id}_{V^\ell})
\]
for the $R$-operators.
\end{rema}
\begin{rema}
Another approach to fusion formulae for $L$-operators (originating from \cite{KRS}) is by
specialization of the RLL relations \eqref{RLL} at values of $x-y$ for which
$R_{12}^{k\ell}(x-y)$ is not invertible. For instance, in the present setting \eqref{RLL} gives
\begin{align*}
& (T^k\otimes \textup{Id}_{M^\ell})L_{13}^{k\ell}(x+\frac{\eta}{2}-y)L_{23}^{\frac{1}{2}\ell}(x-k\eta-y)=\\
& \qquad = L_{13}^{\frac{1}{2}\ell}(x-k\eta-y)L_{23}^{k\ell}(x+\frac{\eta}{2}-y)(T^k\otimes\textup{Id}_{M^\ell}),
\end{align*}
which shows directly that the operator $L_{13}^{\frac{1}{2}\ell}(x-k\eta-y)L_{23}^{k\ell}(x+\frac{\eta}{2}-y)$ restricts
to a linear endomorphism on the image of $T^k\otimes\textup{Id}_{M^\ell}$. The resulting linear operator is equivalent
to the fused $L$-operator $L^{k+\frac{1}{2},\ell}(x-y)$ in view of Lemma \ref{link}.
\end{rema}

\section{The reflection equation, fusion of $K$-operators and diagonal $K$-operators}\label{HSK-matrices}\label{RE4section}

\subsection{Reflection equations}\label{reintro}
A collection of linear maps $\mathcal{K}^\ell(x): M^\ell\to M^\ell$ is called a family of higher-spin $K$-operators 
if they satisfy the reflection equations in $M^k\otimes M^\ell$:
\begin{equation} \label{reMM}
\mathcal{R}^{k\ell}(x-y)\mathcal{K}_1^k(x)\mathcal{R}^{k\ell}(x+y)\mathcal{K}_2^\ell(y)=
\mathcal{K}_2^\ell(y)\mathcal{R}^{k\ell}(x+y)\mathcal{K}_1^k(x)\mathcal{R}^{k\ell}(x-y).
\end{equation}
\begin{rema}\label{onerefleqn} The natural re\-pre\-sen\-ta\-tion-theoretic forms of the reflection equations \eqref{reMM} involve 
$\mathcal{R}_{21}^{\ell k}(x)=\mathcal{P}^{\ell k}\mathcal{R}^{\ell k}(x)\mathcal{P}^{k\ell}$, cf. \eqref{rree-gen}. 
However, the $P$-symmetry \eqref{Psymmetrytoprove} 
of the $R$-operators has the simplifying effect that all $R$-operators 
can be put into the form $\mathcal{R}^{k\ell}$ and consequently the distinction between left and right 
versions of reflection equations disappears (cf. \cite{Sk}).
\end{rema}
Suppose that for $k\in\frac{1}{2}\mathbb{Z}_{\geq 0}$ there exists a (necessarily unique) linear map
$K^k(x): V^k\rightarrow V^k$ such that 
\[
\textup{pr}^k\circ\mathcal{K}^k(x)=K^k(x)\circ\textup{pr}^k.
\]
Then the equations \eqref{reMM} naturally give rise to (semi-)finite-dimensional versions 
which will also be referred to as reflection equations. 
More precisely, when $k\in \frac{1}{2}\Z_{\geq 0}$ equation \eqref{reMM} projects to the following equation in $V^k \otimes M^\ell$:
\begin{equation} \label{reVM}
L^{k\ell}(x-y) K_1^k(x) L^{k\ell}(x+y)\mathcal{K}_2^\ell(y)=\mathcal{K}_2^\ell(y) L^{k\ell}(x+y) K_1^k(x) L^{k\ell}(x-y);
\end{equation}
Furthermore, when $k,l\in \frac{1}{2}\Z_{\geq 0}$ equation \eqref{reMM} then
projects to the following equation in $V^k \otimes V^\ell$:
\begin{equation} \label{reVV}
R^{k\ell}(x-y) K_1^k(x) R^{k\ell}(x+y) K_2^\ell(y) = K_2^\ell(y) R^{k\ell}(x+y) K_1^k(x) R^{k\ell}(x-y).
\end{equation}

Just as solutions to the quantum Yang-Baxter equation are related to the representation theory of quantized universal enveloping algebras, solutions to the reflection equation ($K$-operators) are related to co-ideal subalgebras of quantized universal enveloping algebras. 
We will discuss it briefly in Subsection \ref{RErep}.

\subsection{$K$-matrices for spin-$\frac{1}{2}$}

With respect to 
the $6$-vertex $R$-operator $R^{\frac{1}{2}\frac{1}{2}}(x)$ (see \eqref{6vertexR}), 
the general diagonal solution of \eqref{reVV} (for $k = \ell = \frac{1}{2}$) is given by Cherednik's \cite{CQKZ2} one-parameter family
\[ K^{\xi,\frac{1}{2}}(x)=\left(\begin{matrix} 1 & 0\\ 0 & \frac{\sinh(\xi-x)}{\sinh(\xi+x)}\end{matrix}\right) \]
written with respect to the basis $(v_1^{\frac{1}{2}},v_2^{\frac{1}{2}})$ of $V^{\frac{1}{2}}$.
To simplify notations we will use $R(x)$ for $R^{\frac{1}{2}\frac{1}{2}}(x)$ and $K^\xi(x)$ for
$K^{\xi,\frac{1}{2}}(x)$. In other words, this matrix acts on the weight basis as
\[
K^\xi(x)v_1^{\frac{1}{2}}=v_1^{\frac{1}{2}},\qquad
K^\xi(x)v_2^{\frac{1}{2}}=\frac{\sinh(\xi-x)}{\sinh(\xi+x)}v_2^{\frac{1}{2}}.
\]
\begin{rema} The proof that $K^\xi(x)$ satisfies \eqref{reVV} for $k=\ell=1/2$ reduces to the identity
\[ \sum_{\epsilon_1,\epsilon_2\in\{\pm 1\}}\epsilon_1\epsilon_2 \frac{\sinh(\xi+\epsilon_1x)\sinh(\xi+\epsilon_2y)} {\sinh(\epsilon_1x +\epsilon_2y)}=0
\]
cf. \cite{RSV}.
\end{rema}
The reflection operator $K^\xi(x)$ satisfies the boundary crossing symmetry:
\begin{equation}\label{Kcrossing}
\textup{Tr}_{2}\Bigl(R_{12}(2x-2\eta)P_{12}K_{2}^\xi(x)\Bigr)=
\frac{\sinh(\xi+x-\eta)\sinh(2x)}{\sinh(\xi+x)\sinh(2x-\eta)}K_1^\xi(x-\eta),
\end{equation}
where $\textup{Tr}_{2}$ is the partial trace over the second tensor component of $V^{\frac{1}{2}}\otimes V^{\frac{1}{2}}$ and $P = P^{\frac{1}{2} \frac{1}{2}}$.
The identity \eqref{Kcrossing} is equivalent to the trigonometric identity
\begin{equation}\label{coeffcond4}
\sinh(\xi+x)\sinh(x-z)+\sinh(\xi-x)\sinh(x+z)=
\sinh(\xi-z)\sinh(2x).
\end{equation}
In Lemma \ref{multivariable} we prove a multivariate extension of \eqref{coeffcond4}, which plays an important
role in the proof of the main result (Theorem \ref{mr}).

A three-parameter family of solutions $K^{\frac{1}{2}}(x)$ of \eqref{reVV} (with $k=\ell=\frac{1}{2}$) is known, see
\cite{dVG,Ne}. 

\subsection{Fusion formula for $K$-operators when $k,\ell\in \frac{1}{2}\Z_{\geq 0}$}\label{FusionKsection}
Notwithstanding Remark \ref{onerefleqn}, in order to put formulas in the natural re\-pre\-sen\-ta\-tion-theoretic form,
we will sometimes use the notation $R_{21}^{\ell k}(x)$.
The intertwining property of the $R$-operator $R^{k\ell}(x)$ gives
\[
R_{21}^{k\ell}(x-y)(\pi_{-x}^\ell\otimes\pi_{-y}^k)(\Delta^{op}(X))=(\pi_{-x}^\ell\otimes\pi_{-y}^k)(\Delta(X))R_{21}^{k\ell}(x-y),
\quad \forall\, X\in\widehat{\mathcal{U}}_{\eta}.
\]

\begin{prop}\label{fusionkprop}
Suppose that the $K^{\frac{1}{2}}(x)$ are complex-linear operators on $V^{\frac{1}{2}}$ depending meromorphically on $x\in\mathbb{C}$
and satisfying the reflection equation
\begin{equation}\label{reqstart}
R^{\frac{1}{2}\frac{1}{2}}_{21}(x-y)K_1^{\frac{1}{2}}(x)R^{\frac{1}{2}\frac{1}{2}}(x+y)K_2^{\frac{1}{2}}(y)=
K_2^{\frac{1}{2}}(y)R^{\frac{1}{2}\frac{1}{2}}_{21}(x+y)K_1^{\frac{1}{2}}(x)R^{\frac{1}{2}\frac{1}{2}}(x-y)
\end{equation}
as linear operators on $V^{\frac{1}{2}}\otimes V^{\frac{1}{2}}$.
Then there exist unique complex-linear operators $K^k(x)$ on $V^k$ for $k\in\frac{1}{2}\mathbb{Z}_{\geq 2}$ satisfying
\begin{equation}\label{fusionk}
j^kK^{k+\frac{1}{2}}(x)=P^{\frac{1}{2}k}K^{\frac{1}{2}}_1(x-k\eta)R^{\frac{1}{2}k}(2x-(k-\frac{1}{2})\eta)
K_2^k(x+\frac{\eta}{2})\iota^k
\end{equation}
for all $k\in\frac{1}{2}\mathbb{Z}_{\geq 2}$.
Furthermore,
\begin{equation}\label{reqfinite}
R^{\ell k}_{21}(x-y)K_1^k(x)R^{k\ell}(x+y)K_2^\ell(y)=
K_2^\ell(y)R_{21}^{\ell k}(x+y)K_1^k(x)R^{k\ell}(x-y)
\end{equation}
as linear operators on $V^k\otimes V^\ell$ for all $k,\ell\in\frac{1}{2}\mathbb{Z}_{>0}$.
\end{prop}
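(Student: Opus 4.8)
The plan is to prove the two assertions of Proposition \ref{fusionkprop} together, by induction on $k\in\frac12\mathbb{Z}_{\geq 1}$, treating \eqref{fusionk} as the \emph{definition} of $K^{k+\frac12}(x)$ and \eqref{reqfinite} as the statement that must be propagated. The base case is $k=\ell=\frac12$, which is exactly the hypothesis \eqref{reqstart} after rewriting it via $P$-symmetry (Lemma \ref{PsymmLem}) in the form \eqref{reVV}; note that $R_{21}^{\frac12\frac12}=R^{\frac12\frac12}$ so \eqref{reqstart} and \eqref{reVV} coincide in this case. The key observation making the induction work is that the fusion formula \eqref{fusionk} expresses $K^{k+\frac12}(x)$, after precomposition with the injective intertwiner $\iota^k$ (equivalently $j^k$), as a composite of operators built only from $K^{\frac12}$, $K^k$ and $R$-operators acting on $V^{\frac12}\otimes V^k$; since $j^k$ is injective this determines $K^{k+\frac12}(x)$ uniquely and, being a composite of meromorphic families, $K^{k+\frac12}(x)$ depends meromorphically on $x$.

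For the inductive step, I would establish \eqref{reqfinite} for the pair $(k+\frac12,\ell)$ (with $\ell\leq k+\frac12$, or more symmetrically for all $\ell\in\frac12\mathbb{Z}_{>0}$ by a parallel induction on $\ell$) by the standard fusion argument: tensor \eqref{fusionk} with $\mathrm{Id}_{V^\ell}$ and with $\iota^k\otimes\mathrm{Id}_{V^\ell}$, and use the fusion formula for $R$-operators from the Remark following Proposition \ref{FusionL},
\[
(\iota^k\otimes\mathrm{Id}_{V^\ell})R^{k+\frac12,\ell}(x\mp y)=R_{13}^{\frac12\ell}(x-k\eta\mp y)R_{23}^{k\ell}(x+\tfrac{\eta}{2}\mp y)(\iota^k\otimes\mathrm{Id}_{V^\ell}),
\]
to rewrite every factor in the candidate identity on $V^{k+\frac12}\otimes V^\ell$, after applying $\iota^k\otimes\mathrm{Id}_{V^\ell}$ to the left, as an operator on $V^{\frac12}\otimes V^k\otimes V^\ell$ built entirely from $K^{\frac12}$, $K^k$, $K^\ell$ and $R$-operators in the three tensor slots. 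One then checks that the resulting two words in these operators coincide; by injectivity of $\iota^k$ this yields \eqref{reqfinite} for $(k+\frac12,\ell)$.

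The main obstacle — and the step that requires genuine bookkeeping rather than formalities — is verifying that those two words on $V^{\frac12}\otimes V^k\otimes V^\ell$ are equal. This reduction consumes: the reflection equation \eqref{reqfinite} for the pair $(k,\ell)$ (inductive hypothesis), the reflection equation for $(\frac12,\ell)$ (from the parallel induction, ultimately the $K^{\frac12}$ reflection equation), the Yang-Baxter equation for the $R$-operators $R^{k\ell}$ (which, as noted after \eqref{RLL}, descends from the one for $\mathcal{R}^{k\ell}$), $P$-symmetry of all the $R$-operators, and the unitarity relation. Concretely one pushes the central fused $R$-operator $R^{\frac12 k}(2x-(k-\frac12)\eta)$ past the $K$-operators and reorders the slots so that the $(1,3)$-slot pattern reproduces the $(\frac12,\ell)$ reflection equation and the $(2,3)$-slot pattern reproduces the $(k,\ell)$ reflection equation, with all stray $R^{\frac12 k}$ and $R^{k\ell}$ factors cancelling by Yang-Baxter and unitarity. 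This is the classical associativity-of-fusion computation; its correctness hinges on tracking the spectral-parameter shifts $x-k\eta$, $x+\frac{\eta}{2}$ and $2x-(k-\frac12)\eta$ through every braiding move, which is exactly where errors tend to creep in and hence deserves careful (though ultimately routine) checking.

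Finally, one records the uniqueness claim: it is immediate from the injectivity of $j^k$, since \eqref{fusionk} determines $j^kK^{k+\frac12}(x)$ and hence $K^{k+\frac12}(x)$ outright. I would also remark in passing that the same argument, run with $L$-operators in place of the finite $R$-operators in the appropriate slots (using Proposition \ref{FusionL}), simultaneously yields the intertwined reflection equation \eqref{reVM} for the fused $K^k$ against $\mathcal{K}^\ell$ with $\ell\in\mathbb{C}$, which is what is actually needed downstream; but for the statement as phrased, the finite-dimensional version \eqref{reqfinite} suffices and the proof is complete once the word identity above is verified.
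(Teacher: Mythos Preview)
Your overall strategy---induction on $k$, with \eqref{fusionk} serving to construct $K^{k+\frac12}$ and \eqref{reqfinite} as the inductive invariant---matches the paper's approach, and your description of the inductive step for \eqref{reqfinite} (expand via fusion into a word on $V^{\frac12}\otimes V^k\otimes V^\ell$, then reduce using Yang--Baxter plus the reflection equations for $(k,\ell)$ and $(\frac12,\ell)$) is exactly what the paper carries out in detail.

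There is, however, a genuine gap in your treatment of the \emph{existence} of $K^{k+\frac12}(x)$. You write that \eqref{fusionk} ``determines $K^{k+\frac12}(x)$ uniquely'' because $j^k$ is injective, but injectivity of $j^k$ only gives uniqueness; it does not tell you that the right-hand side
\[
P^{\frac12 k}K_1^{\frac12}(x-k\eta)R^{\frac12 k}\bigl(2x-(k-\tfrac12)\eta\bigr)K_2^k(x+\tfrac{\eta}{2})\iota^k
\]
has image contained in the (proper) subspace $\mathrm{Im}(j^k)\subset V^k\otimes V^{\frac12}$. Without that containment there is no linear map $K^{k+\frac12}(x)$ making \eqref{fusionk} hold. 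This is precisely the non-formal step: the paper obtains it by specializing the already-established reflection equation \eqref{reqfinite} for the pair $(k,\frac12)$ at $x\mapsto x+\frac{\eta}{2}$, $y\mapsto x-k\eta$, so that the outermost $R$-operators become the degenerate $R^{k\frac12}((k+\frac12)\eta)$, and then invoking Lemma~\ref{link} to identify $\mathrm{Im}(T^k)=\mathrm{Im}(\iota^k)$ and $\mathrm{Im}(S^k)=\mathrm{Im}(j^k)$. That specialization is what forces the composite to land in $\mathrm{Im}(j^k)$. You should add this argument; once you do, the rest of your outline is correct and coincides with the paper's proof.
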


\begin{rema}
We will always set $K^0(x):=\textup{Id}_{V^0}$. 
Then formulae \eqref{fusionk} and \eqref{reqfinite} are trivially satisfied for $k=0$ and/or $\ell=0$.
\end{rema}

\begin{rema}
Fusion of $K$-operators has been studied before in various different contexts, see, e.g., \cite{DKM,KS,IOZ,MNR,MN,MN2,Z}.
\end{rema}
\begin{proof}[Proof of Proposition \ref{fusionkprop}]
Let $m\in\frac{1}{2}\mathbb{Z}_{\geq 0}$ and suppose that the $K$-operators $K^k(x)$ have been constructed for $k\leq m$ satisfying \eqref{fusionk} for
$k<m$ and satisfying \eqref{reqfinite} for $k,l\leq m$.

Consider \eqref{reqfinite} for $\ell=\frac{1}{2}$ and $k=m$, and replace $x$ by $x+\frac{\eta}{2}$ and $y$ by $x-m\eta$. Then we
obtain
\begin{equation*}
\begin{split}
S^mK_2^m(x+\frac{\eta}{2})&\check{R}^{m\frac{1}{2}}(2x-(m-\frac{1}{2})\eta)K_2^{\frac{1}{2}}(x-m\eta)=\\
=&P^{\frac{1}{2}m}K_1^{\frac{1}{2}}(x-m\eta)R^{\frac{1}{2}m}(2x-(m-\frac{1}{2})\eta)K_2^m(x+\frac{\eta}{2})T^m
\end{split}
\end{equation*}
with $\check{R}^{k\ell}(x):=P^{k\ell}R^{k\ell}(x)$
(see Lemma \ref{link} for the definition of $S^m$ and $T^m$). Since the images of the linear maps $T^m$ and $\iota^m$ coincide by Lemma \ref{link}, it follows that
the image of the linear map
\[
P^{\frac{1}{2}m}K_1^{\frac{1}{2}}(x-m\eta)R^{\frac{1}{2}m}(2x-(m-\frac{1}{2})\eta)K_2^m(x+\frac{\eta}{2})\iota^m
\]
is contained in the image of $S^m$. 
By Lemma \ref{link} again, the image of $S^m$ coincides with the image of $j^m$, hence there exists a unique linear operator $K^{m+\frac{1}{2}}(x)$ on $V^{m+\frac{1}{2}}$ such that
\[
j^mK^{m+\frac{1}{2}}(x)=P^{\frac{1}{2}m}K_1^{\frac{1}{2}}(x-m\eta)R^{\frac{1}{2}m}(2x-(m-\frac{1}{2})\eta)
K_2^m(x+\frac{\eta}{2})\iota^m.
\]
It remains to show that \eqref{reqfinite} is valid for $k,\ell\in\frac{1}{2}\mathbb{Z}_{\leq 0}$ and $k,\ell\leq m+\frac{1}{2}$. It suffices to consider the case that
$k=m+\frac{1}{2}$ and/or $\ell=m+\frac{1}{2}$. We divide it into the following three cases:
\renewcommand*\labelenumi{(\theenumi)}
\begin{enumerate}
\item $(k,\ell)=(m+\frac{1}{2},\ell)$ with $\ell\leq m$.
\item $(k,\ell)=(k,m+\frac{1}{2})$ with $k\leq m$.
\item $(k,\ell)=(m+\frac{1}{2},m+\frac{1}{2})$.
\end{enumerate}
If the reflection equation \eqref{reqfinite} is proved for case (1), then (2) follows from (1) using the unitarity of the $R$-operator,
and (3) follows from (1) and (2)
by taking $\ell=m+\frac{1}{2}$ in the following proof of (1).\\

{\it Proof of (1):} Suppose $\ell\in\frac{1}{2}\mathbb{Z}_{\geq 0}$ and $\ell\leq m$. 
Using the fusion formulae of the $R$- and $K$-operators
we obtain
\begin{equation*}
\begin{split}
&R^{\ell,m+\frac{1}{2}}_{21}(x-y)K_1^{m+\frac{1}{2}}(x)R^{m+\frac{1}{2},\ell}(x+y)K_2^\ell(y)=\\
&\qquad=(\iota^m\otimes\textup{Id}_{V^\ell})^{-1}R^{\ell\frac{1}{2}}_{31}(x-m\eta-y)R^{\ell m}_{32}(x+\frac{\eta}{2}-y)
(\iota^m\otimes\textup{Id}_{V^\ell})\\
&\qquad\times
(j^m\otimes\textup{Id}_{V^\ell})^{-1}P^{\frac{1}{2}m}_{12}K_1^{\frac{1}{2}}(x-m\eta)R^{\frac{1}{2}m}_{12}(2x-(m-\frac{1}{2})\eta)
K_2^m(x+\frac{\eta}{2}) 
\\
&\qquad\times 
R^{\frac{1}{2}\ell}_{13}(x-m\eta+y)R^{m\ell}_{23}(x+\frac{\eta}{2}+y)K_3(y)(\iota^m\otimes\textup{Id}_{V^\ell}),
\end{split}
\end{equation*}
where the sublabels $1,2,3$ in the right-hand side stand for the first, second and third tensor component in $V^{\frac{1}{2}}\otimes V^m\otimes V^\ell$
and the sublabels $1,2$ in the left-hand side stand for the first and second tensor component in $V^{m+\frac{1}{2}}\otimes V^\ell$.
Using $P^{m\frac{1}{2}}j^m=\iota^m$ the expression simplifies to
\begin{equation*}
\begin{split}
(\iota^m\otimes\textup{Id}_{V^\ell})^{-1}&R^{\ell\frac{1}{2}}_{31}(x-m\eta-y)K_1^{\frac{1}{2}}(x-m\eta)\\
&\times R_{32}^{\ell m}(x+\frac{\eta}{2}-y)R_{12}^{\frac{1}{2}m}(2x-(m-\frac{1}{2})\eta)R_{13}^{\frac{1}{2}\ell}(x-m\eta+y)\\
&\times K_2^m(x+\frac{\eta}{2})R_{23}^{m\ell}(x+\frac{\eta}{2}+y)K_3^\ell(y)(\iota^m\otimes\textup{Id}_{V^\ell}).
\end{split}
\end{equation*}
Using the quantum Yang-Baxter equation in the second line the expression can be rewritten as
\begin{equation*}
\begin{split}
(\iota^m\otimes\textup{Id}_{V^\ell})^{-1}&R^{\ell\frac{1}{2}}_{31}(x-m\eta-y)K_1^{\frac{1}{2}}(x-m\eta) \\
& \times R_{13}^{\frac{1}{2}\ell}(x-m\eta+y) R_{12}^{\frac{1}{2}m}(2x-(m-\frac{1}{2})\eta)\\
&\times R^{\ell m}_{32}(x+\frac{\eta}{2}-y)K_2(x+\frac{\eta}{2})R_{23}^{m\ell}(x+\frac{\eta}{2}+y)K_3^\ell(y)
(\iota^m\otimes\textup{Id}_{V^\ell}).
\end{split}
\end{equation*}
Applying the reflection equation to the last line leads to the expression
\begin{equation*}
\begin{split}
&(\iota^m\otimes\textup{Id}_{V^\ell})^{-1}R_{31}^{\ell\frac{1}{2}}(x-m\eta-y)K_1^{\frac{1}{2}}(x-m\eta)
R_{13}^{\frac{1}{2}\ell}(x-m\eta+y)K_3^\ell(y)\\
& \qquad \times R_{12}^{\frac{1}{2}m}(2x-(m-\frac{1}{2})\eta)R_{32}^{\ell m}(x+\frac{\eta}{2}+y)\\
& \qquad \times K_2^m(x+\frac{\eta}{2})
R_{23}^{m\ell}(x+\frac{\eta}{2}-y)(\iota^m\otimes\textup{Id}_{V^\ell}).
\end{split}
\end{equation*}
Now applying the reflection equation to the first line gives
\begin{equation*}
\begin{split}
(\iota^m\otimes\textup{Id}_{V^\ell})^{-1}&K_3^\ell(y)R_{31}^{\ell\frac{1}{2}}(x-m\eta+y)K_1^{\frac{1}{2}}(x-m\eta)\\
&\times R_{13}^{\frac{1}{2}\ell}(x-m\eta-y)R_{12}^{\frac{1}{2}m}(2x-(m-\frac{1}{2})\eta)R_{32}^{\ell m}(x+\frac{\eta}{2}+y)\\
&\times K_2^m(x+\frac{\eta}{2})R_{23}^{m\ell}(x+\frac{\eta}{2}-y)(\iota^m\otimes\textup{Id}_{V^\ell}).
\end{split}
\end{equation*}
Applying the quantum Yang-Baxter equation to the second line leads to
\begin{equation*}
\begin{split}
(\iota^m\otimes\textup{Id}_{V^\ell})^{-1}&K_3^\ell(y)R_{31}^{\ell\frac{1}{2}}(x-m\eta+y)R_{32}^{\ell m}(x+\frac{\eta}{2}+y)\\
&\times K_1^{\frac{1}{2}}(x-m\eta)R^{\frac{1}{2}m}_{12}(2x-(m-\frac{1}{2})\eta)K_2^m(x+\frac{\eta}{2})\\
&\times R_{13}^{\frac{1}{2}\ell}(x-m\eta-y)R_{23}^{m\ell}(x+\frac{\eta}{2}-y)(\iota^m\otimes\textup{Id}_{V^\ell}).
\end{split}
\end{equation*}
The fusion formulae for the $R$- and $K$-operators and the fact that $P^{m\frac{1}{2}}j^m=\iota^m$ show that the last expression equals
\[
K_2^\ell(y)R_{21}^{\ell,m+\frac{1}{2}}(x+y)K_1^{m+\frac{1}{2}}(x)R^{m+\frac{1}{2},\ell}_{12}(x-y),
\]
where the sublabels $1$ and $2$ stand for the first and second tensor component in $V^{m+\frac{1}{2}}\otimes V^\ell$. This completes the proof
of the reflection equation for case (1).
\end{proof}

\subsection{Reflection equation and coideal subalgebras}\label{RErep}

Here we briefly discuss the re\-pre\-sen\-ta\-tion-theoretical meaning of reflection equations, cf., e.g., \cite{DMcmp, DM,De}.
Let $\mathcal{A}\subseteq\widehat{\mathcal{U}}_{\eta}$ be a left coideal subalgebra, i.e. it is a unital
subalgebra of $\widehat{\mathcal{U}}_{\eta}$ satisfying $\Delta(\mathcal{A})\subseteq \widehat{\mathcal{U}}_{\eta}\otimes\mathcal{A}$.
If $M$ is a $\widehat{\mathcal{U}}_{\eta}$-module, we write
$M|_{\mathcal{A}}$ for the $\mathcal{A}$-module obtained by restricting the action of $\widehat{\mathcal{U}}_{\eta}$ on $M$ to $\mathcal{A}$.

Suppose that for $k,\ell\in\frac{1}{2}\mathbb{Z}_{>0}$ we have $\mathcal{A}$-intertwiners
\begin{equation}\label{intertwinerA}
K^{k}(x): V^{k}(x)|_{\mathcal{A}}\rightarrow V^{k}(-x)|_{\mathcal{A}},
\qquad K^\ell(x): V^\ell(x)|_{\mathcal{A}}\rightarrow V^\ell(-x)|_{\mathcal{A}}.
\end{equation}
Then the left and right sides of the reflection equation \eqref{reqfinite} are $\mathcal{A}$-intertwiners
$\bigl(V^{k}(x)\otimes V^{\ell}(y)\bigr)|_{\mathcal{A}}\rightarrow \bigl(V^{k}(-x)\otimes V^{\ell}(-y)\bigr)|_{\mathcal{A}}$.
Consequently, if $\bigl(V^{k}(x)\otimes V^{\ell}(y)\bigr)|_{\mathcal{A}}$ is an irreducible $\mathcal{A}$-module for generic $x$ and $y$,
then Schur's lemma implies the reflection equation \eqref{reqfinite} up to a constant. Such examples of $K$-operators have been
constructed with $\mathcal{A}$ the $q$-Onsager algebra, cf., e.g., \cite{De,DMcmp,DM,DN}.

The fusion formula \eqref{fusionk} is compatible with this re\-pre\-sen\-ta\-tion-theoretic perspective in the following sense.  
Assume that $K^{\frac{1}{2}}(x): V^{\frac{1}{2}}(x)|_{\mathcal{A}}\rightarrow V^{\frac{1}{2}}(-x)|_{\mathcal{A}}$
and $K^k(x): V^k(x)|_{\mathcal{A}}\rightarrow V^k(-x)|_{\mathcal{A}}$ are $\mathcal{A}$-intertwiners.
Then the right-hand side of \eqref{fusionk}, which can be written as
\[
K^{\frac{1}{2}}_2(x-k\eta)\check{R}^{\frac{1}{2}k}(2x-(k-\frac{1}{2})\eta))K_2^k(x+\frac{\eta}{2})\iota_x^k
\]
with $\check{R}^{k\ell}(x):=P^{k\ell}R^{k\ell}(x)$, is an $\mathcal{A}$-intertwiner
\[
V^{k+\frac{1}{2}}(x)|_{\mathcal{A}}\rightarrow \bigl(V^k(-x-\frac{\eta}{2})\otimes V^{\frac{1}{2}}(-x+k\eta)\bigr)|_{\mathcal{A}}.
\]
It follows that the corresponding fused $K$-operator $K^{k+\frac{1}{2}}(x): V^{k+\frac{1}{2}}\rightarrow V^{k+\frac{1}{2}}$, 
characterized by
\[
j_{-x}^kK^{k+\frac{1}{2}}(x)=K^{\frac{1}{2}}_2(x-k\eta)\check{R}^{\frac{1}{2}k}(2x-(k-\frac{1}{2})\eta))K_2^k(x+\frac{\eta}{2})\iota_x^k,
\]
becomes an intertwiner
\[
K^{k+\frac{1}{2}}(x): V^{k+\frac{1}{2}}(x)|_{\mathcal{A}}\rightarrow V^{k+\frac{1}{2}}(-x)|_{\mathcal{A}}
\]
of $\mathcal{A}$-modules.

\subsection{Diagonal $K$-operators}\label{diagonalKsection}
\begin{prop}\label{stepKident}
The $K$-operator $K^{\xi,\ell}(x): V^\ell\rightarrow V^\ell$ ($\ell\in\frac{1}{2}\mathbb{Z}_{\geq 0}$) obtained
by recursively fusing $K^{\xi}(x)=K^{\xi,\frac{1}{2}}(x)$ using \eqref{fusionk} acts on the weight basis as
\begin{equation}\label{Kdiagonal}
K^{\xi,\ell}(x)v_n^\ell=C_n^\ell(x;\xi)v_n^\ell,\qquad 1\leq n\leq 2\ell+1,
\end{equation}
where  
\begin{equation}\label{Cell}
C_n^\ell(x;\xi):=\prod_{j=1}^{n-1}\frac{\sinh(\xi-x+(\ell+\frac{1}{2}-j)\eta)}{\sinh(\xi+x+(\ell+\frac{1}{2}-j)\eta)}
\end{equation}
for $n\in\mathbb{Z}_{>1}$ and $C_1^\ell(x;\xi)=1$. 
\end{prop}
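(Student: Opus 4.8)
The plan is to prove \eqref{Kdiagonal}--\eqref{Cell} by induction on $\ell\in\frac{1}{2}\mathbb{Z}_{\geq 0}$, using the fusion recursion \eqref{fusionk} as the engine. The base cases $\ell=0$ (where $K^{\xi,0}(x)=\textup{Id}_{V^0}$ and $C_1^0(x;\xi)=1$) and $\ell=\frac{1}{2}$ (where $K^{\xi,\frac12}(x)=K^\xi(x)$ acts diagonally with eigenvalues $1$ and $\frac{\sinh(\xi-x)}{\sinh(\xi+x)}$, matching $C_1^{1/2}=1$ and $C_2^{1/2}(x;\xi)=\frac{\sinh(\xi-x)}{\sinh(\xi+x)}$ since the product in \eqref{Cell} has the single factor $j=1$ with $\ell+\frac12-j=0$) are immediate. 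For the inductive step, assume $K^{\xi,k}(x)$ is diagonal with eigenvalues $C_n^k(x;\xi)$ and apply \eqref{fusionk} with $K^{\frac12}$ replaced by $K^\xi$. Rewriting the right-hand side as in Subsection \ref{RErep} as $K^\xi_2(x-k\eta)\check R^{\frac12 k}(2x-(k-\tfrac12)\eta)K_2^k(x+\tfrac{\eta}{2})\iota_x^k$, I would evaluate both sides on the basis vector $v_n^{k+\frac12}$ of $V^{k+\frac12}$.

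The computation proceeds as follows. First, $\iota^k(v_n^{k+\frac12})=\mathrm{e}^{\frac{\eta}{2}(n-1)}v_1^{\frac12}\otimes v_n^k+\mathrm{e}^{-\frac{\eta}{2}(n-2-2k)}\frac{\sinh((n-1)\eta)}{\sinh(\eta)}v_2^{\frac12}\otimes v_{n-1}^k$ by Proposition \ref{intertwiners}(i). Next I would apply $K_2^k(x+\tfrac\eta2)$, which by the inductive hypothesis multiplies the $v_m^k$-component by $C_m^k(x+\tfrac\eta2;\xi)$. Then I would apply $\check R^{\frac12 k}(2x-(k-\tfrac12)\eta)=P^{\frac12 k}R^{\frac12 k}(\cdot)$; here I use that $(\textup{Id}_{V^{1/2}}\otimes\textup{pr}^k)L^{\frac12 k}=R^{\frac12 k}(\textup{Id}_{V^{1/2}}\otimes\textup{pr}^k)$ together with the explicit $L$-operator formulae of Subsection \ref{Loperator}, giving $R^{\frac12 k}$ explicitly on the relevant two-dimensional weight-$(2k+3-2n)$ block spanned by $v_1^{\frac12}\otimes v_n^k$ and $v_2^{\frac12}\otimes v_{n-1}^k$. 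Then $K^\xi_2(x-k\eta)$ acts diagonally on the two components (eigenvalue $1$ on $v_1^{\frac12}$, eigenvalue $\frac{\sinh(\xi-x+k\eta)}{\sinh(\xi+x-k\eta)}$ on $v_2^{\frac12}$). Finally the left-hand side $j^k K^{k+\frac12}(x)(v_n^{k+\frac12})$ equals $K^{k+\frac12}(x)$ acting as a scalar (which we must show is $C_n^{k+\frac12}(x;\xi)$) times $j^k(v_n^{k+\frac12})=P^{\frac12 k}\iota^k(v_n^{k+\frac12})$; comparing the coefficient of a single basis vector, say $v_1^{\frac12}\otimes v_n^k$ (after applying $P^{\frac12 k}$ to land in $V^k\otimes V^{\frac12}$, the coefficient of $v_n^k\otimes v_1^{\frac12}$), isolates the scalar.

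Equating the two expressions yields the desired eigenvalue $C_n^{k+\frac12}(x;\xi)$; the main obstacle is the trigonometric bookkeeping, i.e. checking that the ratio coming out of the combination of the explicit $R^{\frac12 k}$-block, the diagonal $K^\xi$ factors and the inductively known $C_n^k$ telescopes to precisely $\prod_{j=1}^{n-1}\frac{\sinh(\xi-x+(k+1-j)\eta)}{\sinh(\xi+x+(k+1-j)\eta)}$ (note $(k+\frac12)+\frac12-j=k+1-j$). In particular, since $K^{k+\frac12}(x)$ is already known to be well-defined (Proposition \ref{fusionkprop}), it suffices to extract the scalar from the $v_1^{\frac12}\otimes v_n^k$-coefficient alone, and then consistency with the other coefficient is automatic; this cuts the work roughly in half. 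I expect the recursion on the eigenvalues to read $C_n^{k+\frac12}(x;\xi)=\frac{\sinh(\xi-x+(k+1-n)\eta)}{\sinh(\xi+x+(k+1-n)\eta)}\,C_{n-1}^k(x+\tfrac\eta2;\xi)$ up to a cancellation of the off-diagonal cross-term against the second summand of $\iota^k$, and a short induction then gives the closed product form \eqref{Cell}. Throughout one checks regularity of $R^{\frac12 k}$ at the relevant argument, which is guaranteed by Lemma \ref{link}.
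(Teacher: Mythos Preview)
Your approach is essentially the paper's: induction on $\ell$, apply the fusion formula \eqref{fusionk} to $\iota^k(v_n^{k+\frac12})$, use the explicit $L$-operator entries to get $R^{\frac12 k}$ on the relevant two-dimensional weight block, and compare with $j^k(v_n^{k+\frac12})$. Your observation that one coefficient suffices (because Proposition \ref{fusionkprop} already forces the right-hand side into $\textup{Im}(j^k)$, and the weight space there is one-dimensional) is a legitimate shortcut that the paper does not take; the paper instead writes down and verifies \emph{both} coefficient identities.

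One correction: the scalar identity you will actually obtain from, say, the $v_n^k\otimes v_1^{\frac12}$-coefficient is not the clean one-term recursion you guess. It is the two-term combination
\[
C_n^{k+\frac12}(x;\xi)=\frac{\sinh(2x+(2-n)\eta)}{\sinh(2x+\eta)}\,C_n^{k}(x+\tfrac{\eta}{2};\xi)+\frac{\sinh((n-1)\eta)}{\sinh(2x+\eta)}\,C_{n-1}^{k}(x+\tfrac{\eta}{2};\xi),
\]
because the off-diagonal entry of $R^{\frac12 k}$ mixes the two summands of $\iota^k(v_n^{k+\frac12})$ rather than cancelling them. This identity (and its companion from the other coefficient) is exactly the trigonometric addition formula \eqref{coeffcond4} after inserting the product form \eqref{Cell}; so the ``trigonometric bookkeeping'' you flag is precisely one application of \eqref{coeffcond4}, not a telescoping product.
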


\begin{rema}
The $K$-operators $K^{\xi,\ell}(x)$ coincide with an appropriate limit of the explicit $\mathcal{A}$-intertwiner $V^\ell(x)|_{\mathcal{A}}\rightarrow
V^\ell(-x)|_{\mathcal{A}}$ for the $q$-Onsager coideal subalgebra $\mathcal{A}\subset \widehat{\mathcal{U}}_{\eta}$ derived in \cite{DN}. This is to be
expected from the re\-pre\-sen\-ta\-tion-theoretic context of the fusion procedure of $K$-operators, cf. Section \ref{RErep}.
\end{rema}
\begin{proof}[Proof of Proposition \ref{stepKident}]
By induction with respect to $\ell$.
By the fusion formula \eqref{fusionk} for $K$-operators it suffices to show that
\begin{equation}\label{sst1}
C_n^{\ell+\frac{1}{2}}(x;\xi)j^\ell(v_n^{\ell+\frac{1}{2}})=P^{\frac{1}{2}\ell}K_{1}^{\xi,\frac{1}{2}}(x-\ell\eta)R^{\frac{1}{2}\ell}(2x-(\ell-\frac{1}{2})\eta)
K_{2}^{\xi,\ell}(x+\frac{\eta}{2})\iota^{\ell}(v_n^{\ell+\frac{1}{2}})
\end{equation}
with $K^{\xi,\ell}(x)$ satisfying \eqref{Kdiagonal}. Both sides can be computed using the the explicit actions of the maps on the standard bases.
It follows that the desired identity \eqref{sst1} is equivalent to
\begin{equation*}
\begin{split}
C_n^{\ell+\frac{1}{2}}(x;\xi) &=\frac{\sinh(2x+(2-n)\eta)}{\sinh(2x+\eta)}C_n^{\ell}(x+\frac{\eta}{2};\xi)+\\
& \qquad + \frac{\sinh((n-1)\eta)}{\sinh(2x+\eta)}C_{n-1}^{\ell}(x+\frac{\eta}{2};\xi),\\
C_n^{\ell+\frac{1}{2}}(x;\xi) &=\frac{\sinh(\xi-x+\ell\eta)}{\sinh(\xi+x-\ell\eta)} \biggl(\frac{\sinh((2\ell+2-n)\eta)}{\sinh(2x+\eta)}C_n^{\ell}(x+\frac{\eta}{2};\xi)+ \\
& \hspace{32mm} + \frac{\sinh(2x+(n-1-2\ell)\eta)}{\sinh(2x+\eta)}C_{n-1}^{\ell}(x+\frac{\eta}{2};\xi)\biggr)
\end{split}
\end{equation*}
for $1\leq n\leq 2\ell+1$. These follow easily from the trigonometric identity \eqref{coeffcond4}.
\end{proof}

\begin{defi}\label{Kmatrixdef}
For $\ell\in \C$ define the linear operator $\mathcal{K}^{\xi,\ell}(x)$ on $M^\ell$ by
\[
\mathcal{K}^{\xi,\ell}(x)m_n^\ell=C_n^\ell(x;\xi)m_n^\ell,\qquad n\geq 1.
\]
Here functions $C_n^\ell(x;\xi)$ are defined in (\ref{Cell}). 
\end{defi}

Note that if $k\in \frac{1}{2} \Z_{\geq 0}$ and $\text{pr}^k: M^k\to V^k$ is the projection from the Verma module to 
the corresponding finite-dimensional irreducible quotient $V^k$, then
\begin{equation}\label{compatibleK}
\textup{pr}^k\circ\mathcal{K}^{\xi,k}(x)=K^{\xi,k}(x)\circ\textup{pr}^k,
\end{equation}
where $K^{\xi,k}(x): V^k\rightarrow V^k$ is the $K$-operator obtained by fusion in the previous subsection.
\begin{prop}\label{Kdiagonalprop}
Let $\xi\in\mathbb{C}$ then the operators $\mathcal{K}^{\xi,k}(x)$ 
satisfy the reflection equation:
\begin{equation}\label{rree}
\mathcal{R}^{k\ell}(x-y)\mathcal{K}_{1}^{\xi,k}(x)\mathcal{R}^{k\ell}(x+y)\mathcal{K}_{2}^{\xi,\ell}(y)=
\mathcal{K}_{2}^{\xi,\ell}(y)\mathcal{R}^{k\ell}(x+y)\mathcal{K}_{1}^{\xi,k}(x)\mathcal{R}^{k\ell}(x-y)
\end{equation}
for all $k,\ell\in\mathbb{C}$.
\end{prop}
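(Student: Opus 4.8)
The plan is to prove the reflection equation \eqref{rree} first for $k,\ell\in\frac12\Z_{\geq 0}$ and then extend to all $k,\ell\in\C$ by analytic continuation. For the half-integer case, the key observation is that by \eqref{compatibleK} and the compatibility of the $L$- and $R$-operators with the projections $\textup{pr}^k$, the Verma-module reflection equation \eqref{rree} in $M^k\otimes M^\ell$ projects, via $\textup{pr}^k\otimes\textup{pr}^\ell$, to the finite-dimensional reflection equation \eqref{reVV} for the fused operators $K^{\xi,k}(x)$, $K^{\xi,\ell}(x)$. But \eqref{reVV} (equivalently \eqref{reqfinite} after using $P$-symmetry, Remark \ref{onerefleqn}) holds for all $k,\ell\in\frac12\Z_{>0}$ by Proposition \ref{fusionkprop}, since by construction $K^{\xi,k}(x)$ and $K^{\xi,\ell}(x)$ are obtained by recursively fusing $K^\xi(x)=K^{\xi,\frac12}(x)$, which satisfies the spin-$\frac12$ reflection equation \eqref{reqstart} (equivalently \eqref{reVV} with $k=\ell=\frac12$). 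The $k=0$ and/or $\ell=0$ cases are trivial since $\mathcal K^{\xi,0}(x)=\textup{Id}$.

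The subtlety is that proving \eqref{reVV} for the \emph{quotient} operators does not by itself prove \eqref{rree} for the \emph{Verma-module} operators: one needs that an operator identity on $M^k\otimes M^\ell$ which vanishes after applying $\textup{pr}^k\otimes\textup{pr}^\ell$ actually vanishes. This is false in general, so instead I would argue directly that both sides of \eqref{rree}, as operators on $M^k\otimes M^\ell$, are rational in $\mathrm e^{x}$, $\mathrm e^{y}$, $p^k$, $p^\ell$ (this follows from the explicit rational dependence of $\mathcal R^{k\ell}$ recorded in Section \ref{ic} together with the explicit coefficients $C_n^\ell(x;\xi)$ in \eqref{Cell}), and act block-diagonally on the finitely many weight spaces of each fixed total weight. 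Concretely, for each fixed $N\in\Z_{\geq 0}$, restrict both sides of \eqref{rree} to the finite-dimensional weight space $\bigoplus_{i+j=N+2}\C\,m_i^k\otimes m_j^\ell$; the matrix entries are rational functions of $\mathrm e^x,\mathrm e^y,p^k,p^\ell$. So it suffices to verify the identity for $k,\ell$ ranging over the infinite set $\frac12\Z_{\geq 0}$ with $k,\ell\geq\frac{N}{2}$, where the Verma modules are large enough that $m_i^k,m_j^\ell$ with $i+j=N+2$ are \emph{not} killed by $\textup{pr}^k\otimes\textup{pr}^\ell$. On that weight space the projection $\textup{pr}^k\otimes\textup{pr}^\ell$ is injective, so \eqref{reVV} for $K^{\xi,k},K^{\xi,\ell}$ does imply \eqref{rree} restricted there. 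Since this holds for infinitely many pairs $(k,\ell)$ of half-integers, the rational functions agree identically, giving \eqref{rree} for all $k,\ell\in\C$ on that weight space, and then for all $k,\ell\in\C$ on all of $M^k\otimes M^\ell$.

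The main obstacle is organizing this analytic-continuation argument cleanly: one must be careful that for a fixed target weight space the relevant matrix coefficients of $\mathcal R^{k\ell}(x-y)$, $\mathcal R^{k\ell}(x+y)$ and of $\mathcal K_1^{\xi,k}(x)$, $\mathcal K_2^{\xi,\ell}(y)$ are all simultaneously rational in the four variables $\mathrm e^x,\mathrm e^y,p^k,p^\ell$ (with no essential singularities), that their common domain of regularity is Zariski-dense, and that the subset of half-integer $(k,\ell)$ used for the verification is Zariski-dense in the relevant parameter space. Once that bookkeeping is in place, the identity \eqref{rree} for half-integers follows from Proposition \ref{fusionkprop} and \eqref{compatibleK} as above, and the extension to $k,\ell\in\C$ is immediate. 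An alternative, slightly cleaner route avoiding the dimension count is to observe that the intertwining characterization of $\mathcal R^{k\ell}(x)$ already makes the reflection equation a consequence of the representation-theoretic fusion picture of Subsection \ref{RErep}: $\mathcal K^{\xi,k}(x)$ is (a limit of) an $\mathcal A$-intertwiner $M^k(x)|_{\mathcal A}\to M^k(-x)|_{\mathcal A}$, and then the two sides of \eqref{rree} are both $\mathcal A$-intertwiners $(M^k(x)\otimes M^\ell(y))|_{\mathcal A}\to(M^k(-x)\otimes M^\ell(-y))|_{\mathcal A}$ agreeing on the highest weight vector, so they coincide by generic irreducibility. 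I expect the authors to take the fusion/projection route since it is the most direct given Proposition \ref{fusionkprop}.
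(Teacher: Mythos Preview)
Your proposal is correct and follows essentially the same route as the paper: reduce to a rational-function identity in $p^{2k},p^{2\ell}$ on each fixed weight space, verify it for sufficiently large half-integers $k,\ell$ where the Verma matrix coefficients coincide with those of the finite quotients, and invoke the finite reflection equation \eqref{reqfinite} from Proposition~\ref{fusionkprop}. The paper carries this out by explicitly naming the matrix coefficients $c_{n,r;s}(\mathrm{e}^x;p^{2k},p^{2\ell})$ of $\mathcal{R}^{k\ell}$ and $d_{n,r;s}^{k,\ell}(\mathrm{e}^x)$ of $R^{k\ell}$ and recording the identity $c_{n,r;s}=d_{n,r;s}^{k,\ell}$ for large half-integer $k,\ell$ (your ``injectivity of $\textup{pr}^k\otimes\textup{pr}^\ell$ on the weight space'' is exactly this), but the logic is the same as yours.
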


\begin{rema}
From the observations in Subsection \ref{reintro} it follows from Proposition \ref{Kdiagonalprop} that 
for $k \in \frac{1}{2} \Z_{\geq 0}$ and $\ell\in\mathbb{C}$, the $K$-operators $K^{\xi,k}(x)$ and 
$\mathcal K^{\xi,\ell}(x)$ satisfy \eqref{reVM}. 
\end{rema}

\begin{proof}[Proof of Proposition \ref{Kdiagonalprop}] 
For $k,\ell\in\frac{1}{2}\mathbb{Z}_{\geq 0}$ denote by
$d_{n,r;s}^{k,\ell}(\mathrm{e}^x)$ the matrix elements of $R^{k\ell}(x)$ in the weight basis:
\begin{equation}\label{coeffRfinite}
R^{k\ell}(x)v_n^k\otimes v_r^\ell=\sum_{s}d_{n,r;s}^{k,\ell}(\mathrm{e}^x)v_{n-s}^k\otimes v_{r+s}^\ell
\end{equation}
for
 $1\leq n\leq 2k+1$, $1\leq r\leq 2\ell+1$ and $s\in\mathbb{Z}$ such that
$1\leq n-s\leq 2k+1$ and $1\leq r+s\leq 2\ell+1$. Similarly, we write for $k,\ell\in\mathbb{C}$
\begin{equation}\label{coeffR}
\mathcal{R}^{k\ell}(x)m_n^k\otimes m_r^\ell=\sum_{s}c_{n,r;s}(\mathrm{e}^{x}; p^{2k},p^{2\ell})m_{n-s}^k\otimes m_{r+s}^\ell,
\qquad n,r\in\mathbb{Z}_{>0}
\end{equation}
with the sum running over the integers $s$ such that $n-s,r+s\geq 1$. The coefficients 
$c_{n,r;s}(\mathrm{e}^{x};p^{2k},p^{2\ell})$ are rational functions in $\mathrm{e}^{x}$, $p^{2k}$ and 
$p^{2\ell}$. 

Let $n,r\in\mathbb{Z}_{>0}$ satisfying $n-s, r+s\in\mathbb{Z}_{>0}$. Then we have
\begin{equation}\label{linkVermaFinite}
c_{n,r;s}(\mathrm{e}^x;\mathrm{e}^{2\eta k},\mathrm{e}^{2\eta\ell})=d_{n,r;s}^{k,\ell}(\mathrm{e}^x)
\end{equation}
for sufficiently large $k,\ell\in\frac{1}{2}\mathbb{Z}_{>0}$ by \eqref{finitered}. 

Note furthermore that the dependence of $C_n^k(x;\xi)$ on $k$ is by a rational dependence on $p^{2k}$. 
To emphasize it, we write $C_n(x;\xi;p^{2k}):=C_n^k(x;\xi)$ for the remainder of the proof. 

The equation \eqref{rree} we want to prove is equivalent to
the following identities: for all $n,r\in\mathbb{Z}_{>0}$ and $t\in\mathbb{Z}$ satisfying $1-r\leq t\leq n-1$, 
\begin{equation*}
\begin{split}
&\sum_{s=1-r}^{n-1}c_{n-s,r+s;t-s}(\mathrm{e}^{x-y};p^{2k},p^{2\ell})C_{n-s}(x;\xi;p^{2k}) \\
& \qquad \qquad\qquad \times c_{n,r;s}(\mathrm{e}^{x+y};p^{2k},p^{2\ell})C_n(y;\xi;p^{2\ell})=\\
&= \sum_{s=1-r}^{n-1}C_{r+t}(y;\xi;p^{2\ell})c_{n-s,r+s;t-s}(\mathrm{e}^{x+y};p^{2k},p^{2\ell}) \\
& \qquad \qquad \qquad\times C_{n-s}(x;\xi;p^{2k}) c_{n,r;s}(\mathrm{e}^{x-y};p^{2k},p^{2\ell}).
\end{split}
\end{equation*}
Since these identities depend rationally on $p^{2k}$ and $p^{2\ell}$, it suffices to prove them for $k,\ell\in\frac{1}{2}\mathbb{Z}_{\geq 0}$
sufficiently large. But then they follow from \eqref{linkVermaFinite} and the "finite" reflection equations
\[
R^{k\ell}(x-y)K_1^{\xi,k}(x)R^{k\ell}(x_y)K_2^{\xi,\ell}(y)=K_2^{\xi,\ell}(y)R^{k\ell}(x+y)K_1^{\xi,k}(x)R^{k\ell}(x-y)
\]
for $k,\ell\in\frac{1}{2}\mathbb{Z}_{\geq 0}$.
\end{proof}

\section{Boundary monodromy operators and Bethe vectors} \label{Bethevectors}

\subsection{Monodromy matrices} 
In order to formulate our (Jackson integral) solutions to the boundary qKZ equations in $M^{\underline{\ell}} = M^{\ell_1} \otimes \cdots \otimes M^{\ell_N}$ we need to introduce (off-shell) Bethe vectors for the reflecting chain, which in turn are defined using boundary monodromy operators. 
Boundary monodromy operators are linear operators acting on the extended tensor product $V^{\frac{1}{2}} \otimes M^{\underline{\ell}}$; the component $V^{\frac{1}{2}}$ is called auxiliary space and the component $M^{\underline{\ell}}$ state space. {}From now on we restrict
our attention to the case that the $K$-matrices are diagonal (cf. Subsection \ref{diagonalKsection}).

The definition of the boundary monodromy operators involves the $L$-operators
\[
L^\ell(x):=L^{\frac{1}{2}\ell}(x): V^{\frac{1}{2}}\otimes M^\ell\rightarrow V^{\frac{1}{2}}\otimes M^\ell
\]
for $\ell\in\mathbb{C}$.
They provide the link between the integrable structure on the auxiliary space and the integrable structure on the state space and satisfy the RLL commutation relations \eqref{RLL} (with $k=\ell=\frac{1}{2}$ and $R^{\frac{1}{2}\frac{1}{2}}(x)$ the $6$-vertex $R$-operator) as well as the ``mixed'' 
reflection equations \eqref{reVM} (with $k=\frac{1}{2}$, $K^{\frac{1}{2}}(x)=K^\xi(x)$ and 
$\mathcal{K}^{\ell}(x)=\mathcal{K}^{\xi,\ell}(x)$).
In addition,
\begin{equation}\label{compatibleYBE}
L^{k}(x)L^{\ell}(x+y)\mathcal{R}^{k\ell}(y)= \mathcal{R}^{k\ell}(x)L^{\ell}(x+y)L^k(x)
\end{equation}
as linear operators on $V^{\frac{1}{2}}\otimes M^k\otimes M^\ell$.
The $L$-operators $L^{\ell}(x)$, together with the integrable data $K^\xi(x)$ and $R(x)$ on the auxiliary space, define an integrable quantum spin chain with diagonal
reflecting ends (see \cite{Sk}).
It is the inhomogeneous Heisenberg XXZ spin chain with continuous spins.

Let $S_N$ be the symmetric group in $N$ letters.
For $\sigma\in S_N$ define the linear operator $T_\sigma(x;\mathbf{t})=T_\sigma^{\underline{\ell}}(x;\mathbf{t})$
on $V^{\frac{1}{2}}\otimes M^{\underline{\ell}}$ by
\begin{equation} \label{Tdefn}
\begin{split}
T_\sigma(x;\mathbf{t}):=&L^{\ell_{\sigma(1)}}(x-t_{\sigma(1)})
\cdots L^{\ell_{\sigma(N)}}(x-t_{\sigma(N)})\\
=&
\left(\begin{matrix} A_\sigma(x;\mathbf{t}) & B_\sigma(x;\mathbf{t})\\
C_\sigma(x;\mathbf{t}) & D_\sigma(x;\mathbf{t})
\end{matrix}\right),
\end{split}
\end{equation}
where in the last equality we have written $T_\sigma(x;\mathbf{t})$ as a
$\textup{End}(M^{\underline{\ell}})$-valued matrix with respect to the ordered basis
$(v_1^{\frac{1}{2}},v_2^{\frac{1}{2}})$ of $V^{\frac{1}{2}}$.
The special case $T(x;\mathbf{t}):=T_e(x;\mathbf{t})$ with $e\in S_N$ the neutral element is the (A-type) monodomy operator.
We write the corresponding matrix coefficients as $A(x;\mathbf{t})=A_e(x;\mathbf{t}),\ldots,
D(x;\mathbf{t})=D_e(x;\mathbf{t})$.

The operators $T_\sigma(x;\mathbf{t})$ satisfy the commutation relations
\begin{equation}\label{RTT}
R_{00'}(x-y)
T_{\sigma,0}(x;\mathbf{t})T_{\sigma,0^\prime}(y;\mathbf{t})=T_{\sigma,0^\prime}(y;\mathbf{t})T_{\sigma,0}(x;\mathbf{t})R_{00'}(x-y)
\end{equation}
as linear operators on $V^{\frac{1}{2}}\otimes V^{\frac{1}{2}}\otimes M^{\underline{\ell}}$, where $T_{\sigma,0}(x;\mathbf{t})$ is the operator $T_\sigma(x;\mathbf{t})$ acting on the first and third tensor leg and $T_{\sigma,0^\prime}(y;\mathbf{t})$ the operator $T_\sigma(y;\mathbf{t})$ on the second and third tensor leg, while $R_{00'}(x-y)$ is the action of $R(x-y)$
on the tensor product $V^{\frac{1}{2}}\otimes V^{\frac{1}{2}}$ of the auxiliary spaces only.

Similarly, for $\sigma\in S_N$ we define $\mathcal{U}_\sigma^{\xi}(x;\mathbf{t})=
\mathcal{U}_{\sigma}^{\xi,\underline{\ell}}(x;\mathbf{t})$ by
\begin{equation} \label{Udefn}
\begin{split}
\mathcal{U}_\sigma^{\xi}(x;\mathbf{t}):=& T_\sigma(x;\mathbf{t})^{-1}K^{\xi}(x)^{-1}T_\sigma(-x;\mathbf{t})\\
=&\left(\begin{matrix} \mathcal{A}_\sigma^{\xi}(x;\mathbf{t}) &
\mathcal{B}_\sigma^{\xi}(x;\mathbf{t})\\
\mathcal{C}_\sigma^{\xi}(x;\mathbf{t}) & \mathcal{D}_\sigma^{\xi}(x;\mathbf{t})
\end{matrix}\right)
\end{split}
\end{equation}
as a linear operator on $V^{\frac{1}{2}}\otimes M^{\underline{\ell}}$ (here $K^\xi(x)^{-1}$ only acts on the auxiliary space component of the tensor product). 
Then $\mathcal{U}^\xi(x;\mathbf{t}):=\mathcal{U}_\mathrm{e}^{\xi}(x;\mathbf{t})$ is the boundary monodromy operator \cite{Sk} associated to the $K$-operator $K^\xi$. 
The operators $\mathcal{U}^{\xi}_\sigma(x;\mathbf{t})$ satisfy the commutation relations
\begin{equation}\label{RU}
\begin{split}
R_{00^\prime}(y-x)\mathcal{U}^\xi_{\sigma,0}(x;\mathbf{t})&R_{00^\prime}(-x-y)\mathcal{U}^\xi_{\sigma,0^\prime}(y;\mathbf{t})=\\
=&\mathcal{U}^\xi_{\sigma,0^\prime}(y;\mathbf{t})R_{00^\prime}(-x-y)\mathcal{U}^\xi_{\sigma,0}(x;\mathbf{t})R_{00^\prime}(y-x)
\end{split}
\end{equation}
as linear operators on $V^{\frac{1}{2}}\otimes V^{\frac{1}{2}}\otimes M^{\underline{\ell}}$ with the same notational conventions as for \eqref{RTT}.
One of the consequences of these commutation relations is the commutativity of the operators $\mathcal{B}^\xi_\sigma$:
\[\lbrack \mathcal{B}_\sigma^{\xi}(x;\mathbf{t}),\mathcal{B}_\sigma^{\xi}(y;\mathbf{t})\rbrack=0.\]

\begin{rema}
Boundary transfer operators were constructed in \cite{Sk} in the context of quantum integrable models with boundaries.
In the present context the boundary transfer operator is the linear operator on $M^{\underline{\ell}}$ defined as
\begin{equation*}
\begin{split}
\mathcal{T}^{\xi_+,\xi_-}(x;\mathbf{t}):=&\textup{Tr}_{V^{\frac{1}{2}}}\bigl(K^{\xi_+}(x-\eta)\mathcal{U}^{\xi_-}(x;\mathbf{t})\bigr)\\
=&\mathcal{A}^{\xi_-}(x;\mathbf{t})+\frac{\sinh(\xi_+-x+\eta)}{\sinh(\xi_++x-\eta)}\mathcal{D}^{\xi_-}(x;\mathbf{t}),
\end{split}
\end{equation*}
where $\xi_+,\xi_-\in\mathbb{C}$. 
It is a commuting family of operators:
\[
\lbrack \mathcal{T}^{\xi_+,\xi_-}(x;\mathbf{t}),\mathcal{T}^{\xi_+,\xi_-}(y;\mathbf{t})\rbrack=0.
\]
In a similar way one can define boundary transfer operators acting on the same state space $M^{\underline{\ell}}$ but 
involving higher-spin representations $V^k$ ($k \in \frac{1}{2} \Z_{\geq 0}$) in the auxiliary space, 
similar to the situation for periodic boundary conditions (see for example, the lectures \cite{R-LH}). 
We will describe their properties in a separate publication.
\end{rema}

\subsection{The pseudo-vacuum and the Bethe vectors}
We write
\begin{equation*}
L^{\ell}(x)=
\left(\begin{matrix}
A^{\ell}(x) & B^{\ell}(x)\\
C^{\ell}(x) & D^{\ell}(x)
\end{matrix}\right)
\end{equation*}
with respect to the ordered basis $(v_1^{\frac{1}{2}},v_2^{\frac{1}{2}})$ of the auxiliary space. 
The matrix coefficients are linear operators on $M^{\ell}$. 
Explicitly they are given by
\begin{equation}\label{actionABCD}
\begin{split}
A^\ell(x)m_n^\ell&=\frac{\sinh(x+(\frac{3}{2}+\ell-n)\eta)}{\sinh(x+(\frac{1}{2}+\ell)\eta)}m_n^\ell,\\
B^\ell(x)m_n^\ell&=\frac{\sinh(\eta)}{\sinh(x+(\frac{1}{2}+\ell)\eta)}\mathrm{e}^{(-\ell-\frac{1}{2}+n)\eta}m_{n+1}^{\ell},\\
C^\ell(x)m_n^\ell&=\frac{\sinh((n-1)\eta)\sinh((2\ell+2-n)\eta)}{\sinh(\eta)\sinh(x+(\frac{1}{2}+\ell)\eta)}
\mathrm{e}^{(\ell+\frac{3}{2}-n)\eta}m_{n-1}^\ell,\\
D^\ell(x)m_n^\ell&=\frac{\sinh(x+(-\frac{1}{2}-\ell+n)\eta)}{\sinh(x+(\frac{1}{2}+\ell)\eta)}m_n^\ell,
\end{split}
\end{equation}
where $m_{0}^\ell$ should be read as zero.
Note that
\begin{gather*}
A^{\ell}(x)m_1^\ell=m_1^\ell, \qquad D^\ell(x)m_1^\ell=\vartheta^\ell(-x)m_1^\ell, \qquad  C^{\ell}(x)m_1^\ell=0,\\
\mathcal{R}^{k\ell}(x)(m_1^k\otimes m_1^\ell)=m_1^k\otimes m_1^\ell, \qquad  \mathcal{K}^{\xi,\ell}(x)m_1^\ell=m_1^\ell.
\end{gather*}
Here $\vartheta^\ell(x)$ is defined in \eqref{varthetal}.
Set
\begin{equation}\label{pseudovacuum}
\Omega:=m_1^{\ell_1}\otimes m_1^{\ell_2}\otimes\cdots\otimes m_1^{\ell_N}\in M^{\underline{\ell}}.
\end{equation}
Note that
\begin{equation}\label{eigenvalues}
A_\sigma(x;\mathbf{t})\Omega=\Omega,
\qquad D_\sigma(x;\mathbf{t})\Omega=\Bigl(\prod_{r=1}^N\vartheta^{\ell_r}(t_r-x)\Bigr)
\Omega
\end{equation}
for all $\sigma\in S_N$. 
The vector $\Omega$ will play the role of the pseudo-vacuum vector, from which off-shell Bethe vectors are generated by repeatedly applying operators $\mathcal B^\xi(x_i;\mathbf{t})$,
cf. \cite{Sk}.

For convenience, to construct our solutions to the boundary qKZ equations we will use a different normalization for $\mathcal{B}_\sigma^{\xi}(x;\mathbf{t})$:
\[
\overline{\mathcal{B}}_\sigma^{\xi}(x;\mathbf{t}):=\Bigl(\prod_{r=1}^N
\frac{\sinh(x-t_r-\ell_r\eta)}{\sinh(x-t_r+\ell_r\eta)}\Bigr)
\frac{\sinh(\xi-x-\frac{\eta}{2})\sinh(2x)}
{\sinh(2x+\eta)}\mathcal{B}_\sigma^{\xi}(x+
\frac{\eta}{2};\mathbf{t}). 
\]
The change from $\mathcal{B}$ to $\overline{\mathcal{B}}$ does not affect the commutativity:
\[\lbrack \overline{\mathcal{B}}_\sigma^{\xi}(x;\mathbf{t}),\overline{\mathcal{B}}_\sigma^{\xi}(y;\mathbf{t})\rbrack=0. \]
Hence, the following operator is well-defined for all $\mathbf{x}=(x_1,\ldots,x_S)$ with $S\in\mathbb{Z}_{\geq 0}$:
\[
\overline{\mathcal{B}}_{\sigma}^{\xi,(S)}(\mathbf{x};\mathbf{t}):=\prod_{j=1}^S\overline{\mathcal{B}}^{\xi}_\sigma(x_j;\mathbf{t}).
\]
We will write $\overline{\mathcal{B}}^\xi(x;\mathbf{t}):=
\overline{\mathcal{B}}_{e}^{\xi}(x;\mathbf{t})$ and
$\overline{\mathcal{B}}^{\xi,(S)}(\mathbf{x};\mathbf{t}):=
\overline{\mathcal{B}}_{e}^{\xi,(S)}(\mathbf{x};\mathbf{t})$
when $\sigma=e$ is the identity element of $S_N$.
The associated off-shell Bethe vectors are the vectors $\overline{\mathcal{B}}^{\xi,(S)}(\mathbf{x};\mathbf{t})  \Omega \in M^{\underline{\ell}}$.

\section{Jackson integral solutions of the boundary qKZ equations} \label{sec:mr}
We recall the notion of mero-uniformly convergent sums for scalar-valued functions (cf. \cite{Ru}), which can be extended to $M^{\underline \ell}$-valued functions in an obvious manner.
\begin{defi}
Let $\mathcal{C}\subset\C^M$ be a discrete subset and $w(\mathbf{x};\mathbf{t})$ ($\mathbf{x}\in\mathcal{C}$) a weight function with values depending meromorphically on $\mathbf{t}\in\C^N$.
Suppose that for all $\mathbf{t}_0\in\C^N$, there exists an open neighbourhood $U_{\mathbf{t}_0}\subset\C^N$ of $\mathbf{t}_0$ and a nonzero holomorphic function $v_{\mathbf{t}_0}$ on $U_{\mathbf{t}_0}$ such that
\begin{enumerate}
\item $v_{\mathbf{t}_0}(\mathbf{t})w(\mathbf{x};\mathbf{t})$ is holomorphic in $\mathbf{t}\in U_{\mathbf{t}_0}$ for all $\mathbf{x}\in\mathcal{C}$,
\item the sum $\sum_{\mathbf{x}\in\mathcal{C}}v_{\mathbf{t}_0}(\mathbf{t}) w(\mathbf{x};\mathbf{t})$ is absolutely and uniformly convergent for $\mathbf{t}\in U_{\mathbf{t}_0}$.
\end{enumerate}
Then there exists a unique meromorphic function $f(\mathbf{t})$ in $\mathbf{t}\in\C^N$ satisfying
\[ v_{\mathbf{t}_0}(\mathbf{t})f(\mathbf{t})=\sum_{\mathbf{x}\in\mathcal{C}} v_{\mathbf{t}_0}(\mathbf{t})w(\mathbf{x};\mathbf{t}) \]
for $\mathbf{t}\in U_{\mathbf{t}_0}$ and $\mathbf{t}_0\in\C^N$.
We will write
\[ f(\mathbf{t})=\sum_{\mathbf{x}\in\mathcal{C}}w(\mathbf{x};\mathbf{t}) \]
and we will say that the sum converges mero-uniformly.
\end{defi}

We are now in a position to present our main theorem.
For a meromorphic function $h$ of one variable, write $h(x\pm y)=h(x+y)h(x-y)$. 
\begin{thm}\label{mr}
Let $\xi_+,\xi_-\in\mathbb{C}$ and let $g_{\xi_+,\xi_-}(x)$, $h(x)$ and $F^{\ell}(x)$ be meromorphic functions in $x\in\mathbb{C}$ satisfying the functional equations
\begin{equation*}
\begin{split}
g_{\xi_+,\xi_-}(x+\tau)&= \frac{\sinh(\xi_--x-\frac{\eta}{2})\sinh(\xi_+-x-\frac{\tau}{2}-\frac{\eta}{2})}{\sinh(\xi_-+x+\tau-\frac{\eta}{2}) \sinh(\xi_++x+\frac{\tau}{2}-\frac{\eta}{2})}g_{\xi_+,\xi_-}(x),\\
h(x+\tau)&=\frac{\sinh(x+\tau)\sinh(x+\eta)}{\sinh(x)\sinh(x+\tau-\eta)}h(x),\\
F^\ell(x+\tau)&=\frac{\sinh(x+\tau-\ell\eta)}{\sinh(x+\tau+\ell\eta)}F^\ell(x).
\end{split}
\end{equation*}
Fix generic $\mathbf{x}_0\in\mathbb{C}^S$ and suppose that the $M^{\underline{\ell}}$-valued sum
\begin{equation*}
\begin{split}
f_S^{\underline{\ell}}(\mathbf{t}):=\sum_{\mathbf{x}\in\mathbf{x}_0+\tau\mathbb{Z}^S}
\Bigl(\prod_{i=1}^Sg_{\xi_+,\xi_-}(x_i)\Bigr)
&\Bigl(\prod_{1\leq i<j\leq S}h(x_i\pm x_j)\Bigr)\\
&\times\Bigl(\prod_{r=1}^N\prod_{i=1}^SF^{\ell_r}(t_r\pm x_i)\Bigr)
\overline{\mathcal{B}}^{\xi_-,(S)}(\mathbf{x};\mathbf{t})\Omega
\end{split}
\end{equation*}
converges mero-uniformly in $\mathbf{t}\in\mathbb{C}^N$.
Then $f_S^{\underline{\ell}}$ is a solution of the boundary qKZ equations \eqref{ReflqKZ}.
\end{thm}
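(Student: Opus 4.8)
The plan is to verify \eqref{ReflqKZ} componentwise. Since the sum defining $f_S^{\underline{\ell}}$ converges mero-uniformly and each transport operator $\Xi_r^{\underline{\ell}}(\mathbf t)$ depends meromorphically on $\mathbf t$, one may bring $\Xi_r^{\underline{\ell}}(\mathbf t)$ inside the sum over $\mathbf x\in\mathbf x_0+\tau\mathbb Z^S$; it then suffices to prove, for each $r$, that the $\mathbf x$-sum of $\Xi_r^{\underline{\ell}}(\mathbf t)\bigl(w(\mathbf x;\mathbf t)\,\overline{\mathcal B}^{\xi_-,(S)}(\mathbf x;\mathbf t)\,\Omega\bigr)$ equals the $\mathbf x$-sum of $w(\mathbf x;\mathbf t+\tau\mathbf e_r)\,\overline{\mathcal B}^{\xi_-,(S)}(\mathbf x;\mathbf t+\tau\mathbf e_r)\,\Omega$, where $w(\mathbf x;\mathbf t)$ denotes the scalar weight. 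The case $S=0$ is immediate: $f_0^{\underline{\ell}}(\mathbf t)=\Omega$ and $\Xi_r^{\underline{\ell}}(\mathbf t)\Omega=\Omega$, because every $\mathcal R^{\ell_j\ell_k}$ and every $\mathcal K^{\xi_+,\ell_r}$, $\mathcal K^{\xi_-,\ell_r}$ occurring in \eqref{Atauj} fixes the tensor product of highest-weight vectors (see the identities recorded after \eqref{actionABCD}).

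For $S\geq 1$ the heart of the proof is an exchange relation for the action of $\Xi_r^{\underline{\ell}}(\mathbf t)$ on the off-shell Bethe vector. Since the $\overline{\mathcal B}^{\xi_-}(x_i;\mathbf t)$ commute and $\Xi_r$ fixes $\Omega$, it is enough to understand how $\Xi_r^{\underline{\ell}}(\mathbf t)$ is commuted past one creation operator, and then to iterate over all $S$ of them, ending on $\Omega$. Recall that $\overline{\mathcal B}^{\xi_-}(x;\mathbf t)$ is, up to the explicit scalar prefactor of its definition, the off-diagonal matrix coefficient of the boundary monodromy operator $\mathcal U^{\xi_-}(x;\mathbf t)=T(x;\mathbf t)^{-1}K^{\xi_-}(x)^{-1}T(-x;\mathbf t)$ on $V^{\frac{1}{2}}\otimes M^{\underline{\ell}}$, with $T(\pm x;\mathbf t)=L^{\ell_1}(\pm x-t_1)\cdots L^{\ell_N}(\pm x-t_N)$, $L^\ell=L^{\frac{1}{2}\ell}$. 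The exchange relation is obtained by pushing the $\mathcal R^{\ell_j\ell_k}$- and $\mathcal K^{\xi_\pm,\ell_r}$-factors of $\Xi_r$ through the two monodromy factors $T(\pm x;\mathbf t)$, using: the relation \eqref{compatibleYBE} and the $RLL$-relation \eqref{RLL}, which carry an $\mathcal R^{\ell_j\ell_k}$ past two $L$-operators and reorganise the $\mathcal R$-factors (turning $T(\pm x;\mathbf t)$ into permuted monodromies $T_\sigma$, whose mutual commutation is governed by \eqref{RTT} and \eqref{RU}); the mixed reflection equation \eqref{reVM} with $k=\frac{1}{2}$, $K^{\frac{1}{2}}=K^{\xi_-}$ and $\mathcal K^\ell=\mathcal K^{\xi_-,\ell}$, which absorbs the boundary $\mathcal K$-factors; the $P$-symmetry \eqref{Psymmetrytoprove}; the crossing symmetry \eqref{crossingsymmetry} of $L^{\frac{1}{2}\ell}$, needed for the factor $T(-x;\mathbf t)$ whose $L$-operators carry negated arguments; and the boundary crossing symmetry \eqref{Kcrossing} of $K^{\xi_-}$. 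The outcome has the schematic form $\Xi_r^{\underline{\ell}}(\mathbf t)\,\overline{\mathcal B}^{\xi_-}(x;\mathbf t)\,\Omega = \lambda_r(x;\mathbf t)\,\overline{\mathcal B}^{\xi_-}(x;\mathbf t+\tau\mathbf e_r)\,\Omega + (\text{unwanted terms})$, with $\lambda_r$ an explicit product of $\sinh$-ratios and the unwanted terms of the form $\overline{\mathcal B}^{\xi_-}(w;\mathbf t+\tau\mathbf e_r)\Omega$ at resonant values $w$ (of shape $\pm(t_r+\tfrac{\tau}{2})$, forced by the poles of the $L$- and $K$-operators).

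Iterating over the $S$ creation operators and multiplying by $w(\mathbf x;\mathbf t)$, the main term carries the scalar $\prod_{i=1}^S\lambda_r(x_i;\mathbf t)$, which is exactly $w(\mathbf x;\mathbf t+\tau\mathbf e_r)/w(\mathbf x;\mathbf t)=\prod_{i=1}^S F^{\ell_r}(t_r+\tau\pm x_i)/F^{\ell_r}(t_r\pm x_i)$ by the functional equation for $F^{\ell_r}$ --- the only factor of $w$ depending on $t_r$. So the main term reproduces the $\mathbf x$-sum of $w(\mathbf x;\mathbf t+\tau\mathbf e_r)\,\overline{\mathcal B}^{\xi_-,(S)}(\mathbf x;\mathbf t+\tau\mathbf e_r)\,\Omega$, and it remains to show the weighted unwanted terms sum to zero over $\mathbf x\in\mathbf x_0+\tau\mathbb Z^S$. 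Genericity of $\mathbf x_0$ ensures the resonant values never lie in the lattice, so these terms are genuinely extra. Using the quasi-periodicity of $g_{\xi_+,\xi_-}$ and of $h$ --- the two functional equations not yet used, encoding respectively the boundary $K^{\xi_\pm}$-contributions and the bulk $6$-vertex $R$-matrix weights --- together with the multivariate trigonometric identity of Lemma \ref{multivariable} (the multivariable extension of \eqref{coeffcond4}), the weighted unwanted terms assemble into a sum of discrete total differences in the lattice variables $x_i$, and hence vanish upon summation. This proves the componentwise identity, and with it the theorem.

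The main obstacle is the exchange-relation computation of the second paragraph: commuting every $\mathcal R$- and $\mathcal K$-factor of $\Xi_r$ through the two monodromy factors $T(\pm x;\mathbf t)$ while keeping the scalar bookkeeping exact, and --- above all --- organising the numerous unwanted contributions into the telescoping form certified by Lemma \ref{multivariable}. By comparison, the reduction to a componentwise identity, the matching of the main term via the $F^{\ell_r}$-equation, and the base case $S=0$ are routine.
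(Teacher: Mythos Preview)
Your proposal sketches a plausible-sounding strategy---commute $\Xi_r$ through the creation operators one at a time---but this is not the paper's route, and the sketch has a substantive gap. The paper does not derive or use an operator exchange relation of the form $\Xi_r(\mathbf t)\,\overline{\mathcal B}^{\xi_-}(x;\mathbf t)\sim\lambda\,\overline{\mathcal B}^{\xi_-}(x;\mathbf t+\tau\mathbf e_r)\,\Xi_r(\mathbf t)+\cdots$. Instead it begins from an explicit expansion of the Bethe vector $\overline{\mathcal B}^{\xi_-,(S)}(\mathbf x;\mathbf t)\Omega$ as a sum over signs $\bm\epsilon\in\{\pm\}^S$ and subsets $J\subseteq\{1,\dots,S\}$ with explicit scalar coefficients $\mathcal Y^{\xi_-,\bm\epsilon,J}_\sigma$ (imported from \cite[Cor.~4.3]{RSV}), and Lemma~\ref{technicalreform} rewrites the qKZ equation as an equality of two such explicit sums. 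The proof then proceeds by \emph{depth} $d=\#J^c$: at depth $d$ exactly $d$ of the local $B^{\ell_r}$-operators hit the $r$-th tensor leg, so $\mathcal K^{\xi_+,\ell_r}(t_r+\tfrac\tau2)$ contributes the eigenvalue $C^{\ell_r}_{d+1}(t_r+\tfrac\tau2;\xi_+)$. Depth $0$ is handled by the $F^{\ell_r}$-equation alone. For $d\ge1$, a lattice shift $x_k\mapsto x_k-\tau$ together with the functional equations for $g_{\xi_+,\xi_-}$ and $h$ (Lemma~\ref{reduction}) rewrites each $\bm\epsilon_{J^c}$-term on both sides against a common reference term; the remaining identity is a $t_r\leftrightarrow -t_r-\tau$ symmetry (Lemma~\ref{trigident}), which is exactly Lemma~\ref{multivariable}.

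The concrete gap in your approach is the treatment of $\mathcal K^{\xi_+,\ell_r}$. This operator is diagonal, but its eigenvalue $C^{\ell_r}_{d+1}$ depends on the weight in the $r$-th tensor leg, i.e.\ on how many of the $S$ creation operators ultimately act there. Hence a single-$\overline{\mathcal B}$ exchange relation of the form you posit cannot exist: you cannot push $\Xi_r$ past one $\overline{\mathcal B}(x_i;\mathbf t)$ without already knowing what the remaining $\overline{\mathcal B}$'s do to the $r$-th leg. This is precisely why the paper organises the argument by depth rather than by iteration over the $x_i$. Your identification of the ``main-term'' scalar with $\prod_i F^{\ell_r}(t_r+\tau\pm x_i)/F^{\ell_r}(t_r\pm x_i)$ is correct only at depth $0$; at depth $d\ge1$ the factor $C^{\ell_r}_{d+1}(t_r+\tfrac\tau2;\xi_+)$ appears on one side and has to be matched by the $\bm\epsilon_{J^c}$-sum on the other---that matching is Lemma~\ref{trigident}, not a telescoping cancellation of ``unwanted terms'' to zero. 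Relatedly, the crossing symmetries \eqref{crossingsymmetry} and \eqref{Kcrossing} that you list as ingredients play no role in the paper's proof; the multivariate identity of Lemma~\ref{multivariable} is used to establish the $t_r\leftrightarrow -t_r-\tau$ symmetry between two nonzero expressions, not to make a residual sum vanish.
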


Theorem \ref{mr} generalizes the main result of \cite{RSV} from $2$-dimensional representations of quantum $\mathfrak{sl}_2$  to arbitrary Verma modules.
The proof of Theorem \ref{mr} follows roughly the line of reasoning of the spin-$\frac{1}{2}$ case \cite{RSV}, although considerably more technical difficulties need to be overcome. The proof is given in Section \ref{sec:proof}. 

We now make the solutions concrete. 
We set $q:=\mathrm{e}^\tau$ and we assume that $\Re(\tau)<0$, so that $|q|<1$.
Solutions $g_{\xi_+,\xi_-}, h$ and $F^\ell$ of the resulting functional relations can now be expressed in terms of $q$-Gamma functions or, equivalently, in terms of $q$-shifted factorials
\[ \bigl(x;q\bigr)_{\infty}:=\prod_{i=0}^{\infty}(1-q^ix). \]
We write $\bigl(x_1,\ldots,x_s;q\bigr)_{\infty}:= \prod_{i=1}^s\bigl(x_i;q\bigr)_{\infty}$ for products of $q$-shifted factorials. 
As solutions of the functional equations we take
\begin{equation}\label{wghtexplicit}
\begin{split}
g_{\xi_+,\xi_-}(x)&=\mathrm{e}^{(\frac{2(\xi_-+\xi_+-\eta)}{\tau}+1)x}
\frac{\bigl(q^2 \mathrm{e}^{2(x+\xi_-)-\eta},q \mathrm{e}^{2(x+\xi_+)-\eta};q^2\bigr)_{\infty}}
{\bigl(\mathrm{e}^{2(x-\xi_-)+\eta},q\mathrm{e}^{2(x-\xi_+)+\eta};q^2\bigr)_{\infty}},\\
h(x)&=\mathrm{e}^{-\frac{2\eta x}{\tau}}(1-\mathrm{e}^{2x})\frac{
\bigl(q^2\mathrm{e}^{2(x-\eta)};q^2\bigr)_{\infty}}
{\bigl(\mathrm{e}^{2(x+\eta)};q^2\bigr)_{\infty}},\\
F^\ell(x)&=\mathrm{e}^{\frac{2\ell\eta x}{\tau}}
\frac{\bigl(q^2\mathrm{e}^{2(x+\ell\eta)};q^2\bigr)_{\infty}}
{\bigl(q^2\mathrm{e}^{2(x-\ell\eta)};q^2\bigr)_{\infty}}.
\end{split}
\end{equation}
With these choices for the solutions of the functional equations and the assumption that $\Re(\tau)<0$, it is readily established  (cf. \cite[Subsections 3.4 and 3.5]{RSV}) that the solution $f^{\underline \ell}_S(\mathbf t)$ defined in Theorem \ref{mr} converges mero-uniformly in $\mathbf{t}\in \mathbb{C}^N$ for generic $\mathbf{x}_0\in\mathbb{C}^S$ when $\Re(\eta)\geq 0$ and
\begin{equation} \label{convergencecondition}
\Re\bigl(2\xi_++2\xi_-+2\bigl(2\sum_{r=1}^N\ell_r-1\bigr)\eta+\tau\bigr)<0.
\end{equation}

\section{Proof of the main result}\label{sec:proof}

\subsection{Preliminary steps}
Let $S_N$ be the symmetric group in $N$ letters and $\sigma\in S_N$.
We view
\begin{equation}\label{Lhat}
L^{\ell_{\sigma(1)}}(x-t_{\sigma(1)})L^{\ell_{\sigma(2)}}(x-t_{\sigma(2)})
\cdots L^{\ell_{\sigma(N-1)}}(x-t_{\sigma(N-1)})
\end{equation}
as a linear operator on $V^{\frac{1}{2}}\otimes M^{\underline{\ell}}$ acting trivially on the tensor component
of $M^{\underline{\ell}}$ labelled by $\sigma(N)$. Write
\begin{equation*}
\left(\begin{matrix} \widehat{A}_\sigma(x;\mathbf{t}) & \widehat{B}_\sigma(x;\mathbf{t})\\
\widehat{C}_\sigma(x;\mathbf{t}) & \widehat{D}_\sigma(x;\mathbf{t})
\end{matrix}\right)
\end{equation*}
for the operator \eqref{Lhat}, written as a matrix with respect to the ordered basis $(v_1^{\frac{1}{2}},v_2^{\frac{1}{2}})$ of $V^{\frac{1}{2}}$. 
The operators $\widehat{A}_\sigma(x;\mathbf{t}),\ldots,\widehat{D}_\sigma(x;\mathbf{t})$ act on $M^{\underline{\ell}}$. 
They act trivially on the $\sigma(N)$-th tensor component of $M^{\underline{\ell}}$ and do not depend on $t_{\sigma(N)}$.

For $\sigma\in S_N$, $J\subseteq\{1,\ldots,S\}$ and $\bm{\epsilon}\in\{\pm\}^S$ we write
\begin{equation*}
\begin{split}
\mathcal{Y}_{\sigma}^{\xi,\bm{\epsilon},J}& (\mathbf{x};\mathbf{t}):=
\Bigl(\prod_{i=1}^S \epsilon_i\sinh(\xi-\epsilon_ix_i-\frac{\eta}{2})
\prod_{r=1}^N\frac{\sinh(\epsilon_ix_i-t_r-\ell_r\eta)}
{\sinh(\epsilon_ix_i-t_r+\ell_r\eta)}\Bigr)\\
&\times\Bigl(\prod_{1\leq i<j\leq S}\frac{\sinh(\epsilon_ix_i+\epsilon_jx_j+\eta)}
{\sinh(\epsilon_ix_i+\epsilon_jx_j)}\Bigr)
Y_\sigma^J((-\epsilon_1x_1-\frac{\eta}{2},\ldots,-\epsilon_Sx_S-\frac{\eta}{2});
\mathbf{t})
\end{split}
\end{equation*}
with
\begin{equation*}
Y_\sigma^{J}(\mathbf{x};\mathbf{t}) :=\Bigl(\prod_{i\in J}\frac{\sinh(x_i-t_{\sigma(N)}+(\frac{1}{2}-\ell_{\sigma(N)})\eta)}{\sinh(x_i-t_{\sigma(N)}+(\frac{1}{2}+\ell_{\sigma(N)})\eta)}\Bigr) \hspace{-2mm} \prod_{(i,j)\in J\times J^c} \hspace{-2mm} \frac{\sinh(x_i-x_j+\eta)}{\sinh(x_i-x_j)}
\end{equation*}
and $J^c:=\{1,\ldots,S\}\setminus J$ (empty products are equal to one). 
Similarly to the spin-$\frac{1}{2}$ case (see \cite[Cor. 4.3]{RSV}) we have the explicit expression
\[
\begin{split}
{\overline{\mathcal{B}}}_{\sigma}^{\xi,(S)} (\mathbf{x};\mathbf{t})\Omega &=
\sum_{\bm{\epsilon}\in\{\pm 1\}^S}\sum_{J\subseteq\{1,\ldots,S\}}
\mathcal{Y}_{\sigma}^{\xi,\bm{\epsilon},J}(\mathbf{x};\mathbf{t}) \\
&\; \times \Bigl(\prod_{j\in J^c}B^{\ell_{\sigma(N)}}(-\epsilon_jx_j-\frac{\eta}{2}-
t_{\sigma(N)})\Bigr) \Bigl(\prod_{i\in J}\widehat{B}_\sigma(-\epsilon_ix_i
-\frac{\eta}{2};\mathbf{t})\Bigr)\Omega
\end{split}
\] 
of the Bethe vector (see \cite[Cor. 4.3]{RSV}). 
For $r \in \{1,\ldots,N-1\}$ write $s_r \in S_N$ for the simple neighbouring transposition $r \leftrightarrow r+1$.
In \cite[Lemma 5.4]{RSV} the condition that the function $f_S^{\underline \ell}(\bm t)$ with $\ell = (\tfrac{1}{2},\cdots,\tfrac{1}{2})$ satisfies the boundary qKZ equations is re-written as a system of equations involving the weight functions $\mathcal{Y}_{\sigma}^{\xi, \bm{\epsilon},J}$ where $\sigma = s_r \cdots s_{N-1}$ for some $r \in \{1,\ldots,N\}$.
This directly generalizes to the following result in the current higher-spin context.
\begin{lem}\label{technicalreform}
Provided mero-uniform convergence,
\[
f_S^{\underline{\ell}}(\mathbf{t}):=\sum_{\mathbf{x}\in\mathbf{x}_0+\tau\mathbb{Z}^S}
w^{(S)}(\mathbf{x};\mathbf{t};\xi_+,\xi_-)
\overline{\mathcal{B}}^{\xi_-,(S)}(\mathbf{x};\mathbf{t})\Omega
\]
satisfies the boundary qKZ equations \eqref{ReflqKZ} iff
\begin{equation}\label{lhs}
\begin{split}
\sum_{\mathbf{x},\bm{\epsilon},J}&w^{(S)}(\mathbf{x};\mathbf{t};\xi_+,\xi_-)
\Bigl(\prod_{i=1}^S\frac{\sinh(\pm x_i+t_r+\ell_r\eta)}{\sinh(\pm x_i+t_r-\ell_r\eta)} \Bigr)
\mathcal{Y}_{s_r\cdots s_{N-1}}^{\xi_-,\bm{\epsilon},J}(\mathbf{x};
e_r\mathbf{t}) \\
& \times \mathcal{K}^{\xi_+,\ell_r}(t_r+\frac{\tau}{2})\\
&\times
\Bigl(\prod_{j\in J^c}B^{\ell_r}(-\epsilon_jx_j-\frac{\eta}{2}+t_r)\Bigr)
\Bigl(\prod_{i\in J}\widehat{B}_{s_r\cdots s_{N-1}}(-\epsilon_ix_i-\frac{\eta}{2};
\mathbf{t})\Bigr)\Omega
\end{split}
\end{equation}
equals
\begin{equation}\label{rhs}
\begin{split}
&\hspace{-4mm} \sum_{\mathbf{x},\bm{\epsilon},J}w^{(S)}(\mathbf{x};\mathbf{t}+\tau \bm e_r;
\xi_+,\xi_-)\mathcal{Y}_{s_r\cdots s_{N-1}}^{\xi_-,\bm{\epsilon},J}(\mathbf{x};
\mathbf{t}+\tau \bm e_r)\\
& \times\Bigl(\prod_{j\in J^c}B^{\ell_r}(-\epsilon_jx_j-\frac{\eta}{2}-t_r-
\tau)\Bigr)\Bigl(\prod_{i\in J}\widehat{B}_{s_r\cdots s_{N-1}}
(-\epsilon_ix_i-\frac{\eta}{2};\mathbf{t})\Bigr)\Omega
\end{split}
\end{equation}
for $r=1,\ldots,N$,
where the summations are over $\mathbf{x}\in\mathbf{x}_0+\tau\mathbb{Z}^S$,
$\bm{\epsilon}\in\{\pm\}^S$ and over subsets $J\subseteq\{1,\ldots,S\}$ (recall that $J^c=\{1,\dots, S\}\backslash J$).
\end{lem}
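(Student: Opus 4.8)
The plan is to adapt the proof of \cite[Lemma 5.4]{RSV} to the Verma-module setting; the task is essentially to verify that each manipulation there survives the passage from $(\mathbb{C}^2)^{\otimes N}$ to $M^{\underline{\ell}}$. By \eqref{ReflqKZ}--\eqref{Atauj}, $f_S^{\underline{\ell}}$ solves the boundary qKZ equations iff $\Xi_r^{\underline{\ell}}(\mathbf{t})f_S^{\underline{\ell}}(\mathbf{t})=f_S^{\underline{\ell}}(\mathbf{t}+\tau\bm e_r)$ for $r=1,\dots,N$, and I would rewrite the two sides of this identity separately. On both sides I would substitute the Jackson sum together with the explicit expansion of $\overline{\mathcal{B}}_\sigma^{\xi_-,(S)}(\mathbf{x};\mathbf{t})\Omega$ recorded above in terms of the weight functions $\mathcal{Y}_\sigma^{\xi_-,\bm\epsilon,J}$ and the creation operators $B^{\ell_{\sigma(N)}}$, $\widehat{B}_\sigma$. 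The relevant ordering is $\sigma=s_r\cdots s_{N-1}$, the cyclic permutation with $(s_r\cdots s_{N-1})(N)=r$: it places the $r$-th tensor leg of $M^{\underline{\ell}}$ last, which is exactly the leg singled out by the transport operator $\Xi_r^{\underline{\ell}}$.

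For the transported side I would compute $\Xi_r^{\underline{\ell}}(\mathbf{t})\overline{\mathcal{B}}_e^{\xi_-,(S)}(\mathbf{x};\mathbf{t})\Omega$. The transport operator $\Xi_r^{\underline{\ell}}$ carries the $r$-th leg of $M^{\underline{\ell}}$ around the reflecting chain, successively through the other sites, reflecting at the boundaries via $\mathcal{K}^{\xi_-,\ell_r}(t_r)$ and $\mathcal{K}^{\xi_+,\ell_r}(t_r+\tfrac{\tau}{2})$, and returns it to position $r$. The $R$-operators of $\Xi_r^{\underline{\ell}}$ are to be moved past the $L$-operators occurring in $B^{\ell_N}$ and $\widehat{B}_e$ using the quantum Yang--Baxter equation \eqref{qYBkl} and the compatibility relation \eqref{compatibleYBE}; the boundary operator $\mathcal{K}^{\xi_-,\ell_r}(t_r)$ is to be moved past them using the mixed reflection equation \eqref{reVM} (with $k=\tfrac{1}{2}$ and $K^{\frac12}(x)=K^{\xi_-}(x)$, the boundary datum already built into $\overline{\mathcal{B}}^{\xi_-}$); and the scalar part is to be collapsed using $\mathcal{R}^{k\ell}(x)(m_1^k\otimes m_1^\ell)=m_1^k\otimes m_1^\ell$, $\mathcal{K}^{\xi,\ell}(x)m_1^\ell=m_1^\ell$ and the eigenvalue relations \eqref{eigenvalues}. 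The outcome should be the product $\prod_{i=1}^S\sinh(\pm x_i+t_r+\ell_r\eta)/\sinh(\pm x_i+t_r-\ell_r\eta)$ times $\mathcal{K}^{\xi_+,\ell_r}(t_r+\tfrac{\tau}{2})$ applied to the $\sigma=s_r\cdots s_{N-1}$ Bethe vector with the inhomogeneity $t_r$ reflected (the argument $e_r\mathbf{t}$ in \eqref{lhs}); expanding this Bethe vector with the explicit formula once more yields \eqref{lhs}.

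For the shifted side I would simply substitute $\mathbf{t}\mapsto\mathbf{t}+\tau\bm e_r$ into $f_S^{\underline{\ell}}$ and expand $\overline{\mathcal{B}}_{s_r\cdots s_{N-1}}^{\xi_-,(S)}(\mathbf{x};\mathbf{t}+\tau\bm e_r)\Omega$ in the ordering $\sigma=s_r\cdots s_{N-1}$; since $\widehat{B}_{s_r\cdots s_{N-1}}$ neither acts on the $r$-th leg nor depends on $t_r$, all $t_r+\tau$ dependence resides in $w^{(S)}$, in $\mathcal{Y}_{s_r\cdots s_{N-1}}^{\xi_-,\bm\epsilon,J}$, and in the arguments of the $B^{\ell_r}$-factors, which produces \eqref{rhs}. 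As each of these rewritings is an exact identity (not merely an implication), the vector equation $\Xi_r^{\underline{\ell}}f_S^{\underline{\ell}}(\mathbf{t})=f_S^{\underline{\ell}}(\mathbf{t}+\tau\bm e_r)$ becomes equivalent to the equality of \eqref{lhs} and \eqref{rhs}; the passage from ``equal vectors'' to ``equal sums'' uses the (generic) linear independence of the creation-operator strings $\bigl(\prod_{j\in J^c}B^{\ell_r}(\cdots)\bigr)\bigl(\prod_{i\in J}\widehat{B}_{s_r\cdots s_{N-1}}(\cdots;\mathbf{t})\bigr)\Omega$ appearing on both sides, handled exactly as in \cite{RSV}.

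I expect the main obstacle to be the transport computation of the second step: tracking all scalar normalisations --- the factors $\alpha^{k\ell}$ from the definition of $\mathcal{R}^{k\ell}$, the normalisation relating $\overline{\mathcal{B}}^{\xi_-}$ to $\mathcal{B}^{\xi_-}$, the coefficients $C_n^{\ell_r}$, and the factors $\vartheta^{\ell_r}$ coming from the diagonal $L$-operator entries acting on highest-weight vectors --- through the long product of $R$- and $K$-operators, and checking that after all cancellations precisely the prefactor in \eqref{lhs} and a single surviving $\mathcal{K}^{\xi_+,\ell_r}$ remain. Relative to \cite{RSV}, the new feature is that the $L$-operators now act on the infinite-dimensional modules $M^{\ell_r}$, so these manipulations must be carried out with the weight-dependent coefficients \eqref{actionABCD} rather than with $2\times2$ matrices; but the underlying Yang--Baxter and reflection equations in this generality have already been secured earlier (notably the $P$-symmetry of Lemma \ref{PsymmLem} and the reflection equation of Proposition \ref{Kdiagonalprop}), so no genuinely new algebraic input is required.
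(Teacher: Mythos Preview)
Your proposal is correct and follows essentially the same approach as the paper, which itself gives no detailed proof but simply states that \cite[Lemma~5.4]{RSV} ``directly generalizes to the following result in the current higher-spin context.'' Your outline---choosing the ordering $\sigma=s_r\cdots s_{N-1}$ so that the $r$-th leg sits last, pushing the transport operator through the Bethe vector via the Yang--Baxter and mixed reflection equations, and expanding both sides with the explicit $\mathcal{Y}^{\xi_-,\bm\epsilon,J}_\sigma$ formula---is precisely the content of that reference, carried over with the infinite-dimensional $L$-operators in place of $2\times2$ matrices.

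One small correction: your final remark about needing ``the (generic) linear independence of the creation-operator strings'' to pass from equal vectors to equal sums is unnecessary here. The expressions \eqref{lhs} and \eqref{rhs} are already the full sums over $\mathbf{x},\bm\epsilon,J$; each one is a single vector in $M^{\underline{\ell}}$, and the lemma asserts equality of those two vectors. No coefficient-matching or linear independence argument is required at this stage---that kind of reasoning enters only later, in the depth-by-depth analysis of Subsection~7.3.
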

We fix $S\geq 1$ and suppress it from the notations. For $d\in\{0,\ldots,S\}$ set $\mathcal{L}^{(d)}_r(\mathbf{t})$ and
$\mathcal{R}^{(d)}_r(\mathbf{t})$ for \eqref{lhs} and \eqref{rhs} respectively,
with the sums running over $\mathbf{x}\in\mathbf{x}_0+\tau\mathbb{Z}^S$,
$\bm{\epsilon}\in\{\pm\}^S$ and over subsets $J\subseteq\{1,\ldots,S\}$
of cardinality $S-d$. The strategy of the proof of Theorem \ref{mr} is to determine
sufficients conditions on the weight function $w^{(S)}(\mathbf{x};\mathbf{t};\xi_+,\xi_-)$ so that
\begin{equation}\label{TODOstep1}
\mathcal{L}^{(d)}_r(\mathbf{t})=\mathcal{R}^{(d)}_r(\mathbf{t})
\end{equation}
for all $d\in\{0,\ldots,S\}$ and $r\in\{1,\ldots,N\}$. We will call $d$ the {\it depth}.
\begin{rema}
In the study \cite{RSV} of Jackson integral solutions for the spin-$\frac{1}{2}$ representations
the terms $\mathcal{L}^{(d)}_r(\mathbf{t})$ and $\mathcal{R}^{(d)}_r(\mathbf{t})$ are automatically
zero if $d\geq 2$, cf. \cite[Rem. 5.5]{RSV}. When $M^{\ell_s}$ are highest weight modules with $\ell_s\in \C$
we have to deal with the terms $\mathcal{L}^{(d)}_r(\mathbf{t})$ and $\mathcal{R}^{(d)}_r(\mathbf{t})$
for any depth $d\in\{0,\ldots,S\}$.
\end{rema}

\subsection{Depth zero}
Completely analogous to the spin-$\frac{1}{2}$ case (see \cite[\S 5.1]{RSV}) we have the following result.

\begin{lem}\label{highestlemma}
Suppose that
\[
w^{(S)}(\mathbf{x};\mathbf{t};\xi_+,\xi_-)=
\Bigl(\prod_{r=1}^N\prod_{i=1}^SF^{\ell_r}(t_r\pm x_i)\Bigr)
G_{\xi_+,\xi_-}^{(S)}(\mathbf{x})
\]
with $G_{\xi_+,\xi_-}^{(S)}(\mathbf{x})$ independent of $\mathbf{t}$. If
\[
F^{\ell_r}(x+\tau)=
\frac{\sinh(x+\tau-\ell_r\eta)}{\sinh(x+\tau+\ell_r\eta)}F^{\ell_r}(x)
\]
for $r=1,\ldots,N$
then, provided mero-uniform convergence,
\begin{equation}\label{hwequal}
\mathcal{L}_r^{(0)}(\mathbf{t};\xi_+,\xi_-)=
\mathcal{R}_r^{(0)}(\mathbf{t};\xi_+,\xi_-)
\end{equation}
for $r=1,\ldots,N$.
\end{lem}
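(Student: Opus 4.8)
The plan is to treat the depth-zero case exactly along the lines of the spin-$\tfrac12$ computation in \cite[\S 5.1]{RSV}, the only genuinely new point being that the tensor leg singled out by $\sigma=s_r\cdots s_{N-1}$ now carries a Verma module. First I would record that $d=0$ forces $J=\{1,\dots,S\}$ and $J^c=\emptyset$, so that in both \eqref{lhs} and \eqref{rhs} the factors $\prod_{j\in J^c}B^{\ell_r}(\cdots)$ disappear and $Y^{\{1,\dots,S\}}_{s_r\cdots s_{N-1}}$ reduces to its single product over $i$ involving $t_{\sigma(N)}=t_r$. Next I would use that the operator $\widehat B_{s_r\cdots s_{N-1}}(\,\cdot\,;\mathbf t)$ acts as the identity on the $r$-th tensor leg of $M^{\underline\ell}$ (because $(s_r\cdots s_{N-1})(N)=r$ and the operator \eqref{Lhat} is trivial on that leg) and is independent of $t_r$; hence the $r$-th leg of $\bigl(\prod_{i=1}^S\widehat B_{s_r\cdots s_{N-1}}(-\epsilon_ix_i-\tfrac\eta2;\mathbf t)\bigr)\Omega$ is the highest weight vector $m_1^{\ell_r}$, on which $\mathcal K^{\xi_+,\ell_r}(t_r+\tfrac\tau2)$ acts by $C_1^{\ell_r}(t_r+\tfrac\tau2;\xi_+)=1$; so $\mathcal K^{\xi_+,\ell_r}$ may simply be deleted from \eqref{lhs}. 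Consequently both $\mathcal L^{(0)}_r(\mathbf t;\xi_+,\xi_-)$ and $\mathcal R^{(0)}_r(\mathbf t;\xi_+,\xi_-)$ become sums over $\mathbf x\in\mathbf x_0+\tau\mathbb{Z}^S$ and $\bm\epsilon\in\{\pm\}^S$ of scalar coefficients times one and the same vector $\bigl(\prod_{i=1}^S\widehat B_{s_r\cdots s_{N-1}}(-\epsilon_ix_i-\tfrac\eta2;\mathbf t)\bigr)\Omega$, so (granting mero-uniform convergence) it is enough to check that, for each fixed $\mathbf x$ and $\bm\epsilon$, the two scalar coefficients agree as meromorphic functions of $\mathbf t$.

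For that termwise identity I would substitute $w^{(S)}(\mathbf x;\mathbf t;\xi_+,\xi_-)=\bigl(\prod_{r'=1}^N\prod_{i=1}^SF^{\ell_{r'}}(t_{r'}\pm x_i)\bigr)G^{(S)}_{\xi_+,\xi_-}(\mathbf x)$ and cancel everything not depending on $t_r$: the factor $G^{(S)}_{\xi_+,\xi_-}(\mathbf x)$, the factors $F^{\ell_{r'}}(t_{r'}\pm x_i)$ with $r'\ne r$, and the $t_r$-independent part of $\mathcal Y^{\xi_-,\bm\epsilon,\{1,\dots,S\}}_{s_r\cdots s_{N-1}}$ (which is the same on the two sides, since $e_r\mathbf t$ and $\mathbf t+\tau\bm e_r$ have the same $r'$-components for all $r'\ne r$). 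What remains is a relation among $\prod_i F^{\ell_r}(t_r\pm x_i)$, $\prod_i F^{\ell_r}(t_r+\tau\pm x_i)$, the explicit prefactor $\prod_i\frac{\sinh(\pm x_i+t_r+\ell_r\eta)}{\sinh(\pm x_i+t_r-\ell_r\eta)}$ appearing in \eqref{lhs}, and the $t_r$-dependent part of $\mathcal Y^{\xi_-,\bm\epsilon,\{1,\dots,S\}}_{s_r\cdots s_{N-1}}$. Extracting that $t_r$-dependence from the definition of $\mathcal Y$ — the $r$-th term of $\prod_{r'}\frac{\sinh(\epsilon_ix_i-t_{r'}-\ell_{r'}\eta)}{\sinh(\epsilon_ix_i-t_{r'}+\ell_{r'}\eta)}$ together with the $Y^{\{1,\dots,S\}}$-factor — and collapsing the resulting products over $\pm x_i$ by means of $\sinh(A+B)\sinh(A-B)=\sinh^2A-\sinh^2B$, I expect that the prefactor in \eqref{lhs} combines with this part so that the required identity reduces to the single shift relation
\[
w^{(S)}(\mathbf x;\mathbf t;\xi_+,\xi_-)=w^{(S)}(\mathbf x;\mathbf t+\tau\bm e_r;\xi_+,\xi_-)\prod_{i=1}^S\frac{\sinh(\pm x_i+t_r+\tau+\ell_r\eta)}{\sinh(\pm x_i+t_r+\tau-\ell_r\eta)}.
\]
This is immediate from the hypothesis $F^{\ell_r}(x+\tau)=\frac{\sinh(x+\tau-\ell_r\eta)}{\sinh(x+\tau+\ell_r\eta)}F^{\ell_r}(x)$ applied to each of the $2S$ factors $F^{\ell_r}(t_r\pm x_i)$, and since $\ell_r$ enters the computation only as a parameter, this is formally the same manipulation as in \cite[\S 5.1]{RSV}.

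The step I expect to be the main obstacle is purely bookkeeping: keeping the $\sinh$-prefactor of \eqref{lhs} correctly aligned with the $t_r$-dependent part of $\mathcal Y^{\xi_-,\bm\epsilon,\{1,\dots,S\}}_{s_r\cdots s_{N-1}}(\mathbf x;e_r\mathbf t)$ through the sign changes $x_i\mapsto-x_i$ and through the reflection/shift of $t_r$ implicit in $e_r\mathbf t$ and in $\mathbf t\mapsto\mathbf t+\tau\bm e_r$, and making sure no stray $\sinh$-factor survives. Once that alignment is in place the functional equation for $F^{\ell_r}$ closes the argument with no further input; and the point that makes the depth-zero case work identically for all spins is precisely the collapse of $\mathcal K^{\xi_+,\ell_r}(t_r+\tfrac\tau2)$ to the identity on the highest weight vector $m_1^{\ell_r}$.
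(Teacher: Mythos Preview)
Your proposal is correct and follows precisely the route the paper indicates: the paper itself gives no proof beyond ``completely analogous to the spin-$\tfrac12$ case (see \cite[\S 5.1]{RSV}),'' and what you have written is exactly that analogue, with the one new observation that $\mathcal{K}^{\xi_+,\ell_r}(t_r+\tfrac\tau2)$ collapses on $m_1^{\ell_r}$ via $C_1^{\ell_r}=1$. Your termwise computation is right (the $\sinh(A+B)\sinh(A-B)$ identity is not actually needed---the cancellation of the $t_r$-dependent part of $\mathcal{Y}^{\xi_-,\bm\epsilon,\{1,\dots,S\}}_{s_r\cdots s_{N-1}}(\mathbf{x};e_r\mathbf{t})$ against the explicit $\sinh$-prefactor in \eqref{lhs} is a direct match of factors in the $\pm x_i$ shorthand), and the resulting shift relation for $w^{(S)}$ is exactly the one the paper later records and uses in the proof of the subsequent lemma.
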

In the remainder of the section we assume that the weight function $w^{(S)}(\mathbf{x};\mathbf{t};\xi_+,\xi_-)$ is of the form as specified in
Lemma \ref{highestlemma}.
\subsection{The remaining depths}
We have the setup that
\[
f_S^{\underline{\ell}}(\mathbf{t})=\sum_{\mathbf{x}\in\mathbf{x}_0+\tau\mathbb{Z}^S}
w^{(S)}(\mathbf{x};\mathbf{t};\xi_+,\xi_-)
\overline{\mathcal{B}}^{\xi_-,(S)}(\mathbf{x};\mathbf{t})\Omega
\]
with the sum converging mero-uniformly in $\mathbf{t}\in\mathbb{C}^N$
and with weight function of the form
\begin{equation}\label{form}
w^{(S)}(\mathbf{x};\mathbf{t};\xi_+,\xi_-)=
\Bigl(\prod_{r=1}^N\prod_{i=1}^SF^{\ell_r}(t_r\pm x_i)\Bigr)
G_{\xi_+,\xi_-}^{(S)}(\mathbf{x})
\end{equation}
with $G_{\xi_+,\xi_-}^{(S)}(\mathbf{x})$ independent of $\mathbf{t}$ and with
the $F^{\ell}$
satisfying
\begin{equation}\label{form2}
F^{\ell}(x+\tau)=
\frac{\sinh(x+\tau-\ell\eta)}
{\sinh(x+\tau+\ell\eta)}F^\ell(x).
\end{equation}
We are now going to show that conditions on the weight factor $G_{\xi_+,\xi_-}^{(S)}(\mathbf{x})$ as stated in
Theorem \ref{mr} imply that \eqref{TODOstep1} is valid for $d\in\{1,\ldots,S\}$ and $r\in\{1,\ldots,N\}$. Combined with Lemma \ref{highestlemma}
and Lemma \ref{technicalreform}, this will complete the proof of Theorem \ref{mr}.

Since the $\xi_\pm$ are fixed throughout this subsection, we will
suppress $\xi_\pm$ from the notations; in particular, we
write $w^{(S)}(\mathbf{x};\mathbf{t})$ for
$w^{(S)}(\mathbf{x};\mathbf{t};\xi_+,\xi_-)$. We also suppress $S\in\mathbb{Z}_{\geq 1}$ from the notations.

If $J\subseteq\{1,\ldots,S\}$, $\epsilon\in\{\pm\}^S$ and
$\mathbf{x}\in\mathbf{x}_0+\tau\mathbb{Z}^S$ then we write
$\mathbf{x}_J:=(x_j)_{j\in J}$ and $\bm{\epsilon}_J:=(\epsilon_j)_{j\in J}$.
Conversely, for given $\bm{\epsilon}_J$ and
$\bm{\epsilon}_{J^c}$ the associated $S$-tuple
of signs will be denoted by $\bm{\epsilon}$
(and similarly for $\mathbf{x}$).

It is convenient to define the following weights.
\begin{defi}
For $r\in\{1,\ldots,N\}$, $\epsilon\in\{\pm\}^S$ and a subset
$J\subseteq\{1,\ldots,S\}$ we write
\[
m^{\bm{\epsilon},J}_r(\mathbf{x};\mathbf{t}):=
\Bigl(\prod_{i=1}^S\frac{\sinh(\pm x_i-t_r+\ell_r\eta)}
{\sinh(\pm x_i-t_r-\ell_r\eta)}\Bigr)
\frac{\mathcal{Y}_{s_r\cdots s_{N-1}}^{\xi_-,\bm{\epsilon},J}
(\mathbf{x};\mathbf{t})}
{\prod_{j\in J^c}\sinh(-\epsilon_jx_j-t_r+\ell_r\eta)}
\]
for $\mathbf{x}\in\mathbf{x}_0+\tau\mathbb{Z}^S$.
\end{defi}
It follows by a straightforward computation that
\begin{equation}\label{m}
\begin{split}
m_r^{\bm{\epsilon},J}(\mathbf{x};\mathbf{t})&=\Bigl(\prod_{j\in J^c}\Bigl(
\epsilon_j\frac{\sinh(\xi_--\epsilon_jx_j-\frac{\eta}{2})}
{\sinh(-t_r-\epsilon_jx_j-\ell_r\eta)}\prod_{\stackrel{s=1}{s\not=r}}^N
\frac{\sinh(t_s-\epsilon_jx_j+\ell_s\eta)}
{\sinh(t_s-\epsilon_jx_j-\ell_s\eta)}\Bigr)\Bigr)\\
&\times\Bigl(\prod_{(i,j)\in J\times J^c}\frac{\sinh(\epsilon_jx_j\pm x_i+\eta)}
{\sinh(\epsilon_jx_j\pm x_i)}\Bigr)
\Bigl(\prod_{\stackrel{i,i^\prime\in J:}{i<i^\prime}}
\frac{\sinh(\epsilon_ix_i+\epsilon_{i^\prime}x_{i^\prime}+\eta)}
{\sinh(\epsilon_ix_i+\epsilon_{i^\prime}x_{i^\prime})}\Bigr)\\
&\times\Bigl(\prod_{\stackrel{j,j^\prime\in J^c:}
{j<j^\prime}}\frac{\sinh(\epsilon_jx_j+\epsilon_{j^\prime}x_{j^\prime}+\eta)}
{\sinh(\epsilon_jx_j+\epsilon_{j^\prime}x_{j^\prime})}\Bigr)\\
&\times\prod_{i\in J}\Bigl(
\epsilon_i\sinh(\xi_--\epsilon_ix_i-\frac{\eta}{2})
\prod_{\stackrel{s=1}{s\not=r}}^N\frac{\sinh(t_s-\epsilon_ix_i+\ell_s\eta)}
{\sinh(t_s-\epsilon_ix_i-\ell_s\eta)}\Bigr).
\end{split}
\end{equation}
\begin{lem}
Fix $d\in\{1,\ldots,S\}$ and $r\in\{1,\ldots,N\}$.
Suppose that
for all subsets $J\subseteq\{1,\ldots,S\}$ of cardinality $S-d$ and for
all $\mathbf{x}_J$ and $\bm{\epsilon}_J$,
\begin{equation*}
\begin{split}
C_{d+1}^{\ell_r}(t_r+\frac{\tau}{2};\xi_+)
&\sum_{\mathbf{x}_{J^c},\bm{\epsilon}_{J^c}}
w^{(S)}(\mathbf{x};\mathbf{t};\xi_+,\xi_-)m_r^{\bm{\epsilon},J}(\mathbf{x};
e_r\mathbf{t})=\\
&\qquad\qquad=\sum_{\mathbf{x}_{J^c},\bm{\epsilon}_{J^c}}
w^{(S)}(\mathbf{x};\mathbf{t};\xi_+,\xi_-)m_r^{\bm{\epsilon},J}(\mathbf{x};
\mathbf{t}+\tau \bm e_r).
\end{split}
\end{equation*}
Then $\mathcal{L}_r^{(d)}(\mathbf{t})=\mathcal{R}_r^{(d)}(\mathbf{t})$.
\end{lem}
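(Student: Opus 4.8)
The plan is to reduce the identity $\mathcal{L}_r^{(d)}(\mathbf{t})=\mathcal{R}_r^{(d)}(\mathbf{t})$ to the scalar identity of the hypothesis by bringing both sides into a common shape: a finite sum over subsets $J\subseteq\{1,\dots,S\}$ with $|J|=S-d$ and over $(\mathbf{x}_J,\bm{\epsilon}_J)$ of a scalar coefficient times a vector $\widetilde{u}_J(\mathbf{x}_J,\bm{\epsilon}_J;\mathbf{t})\in M^{\underline{\ell}}$ that does not depend on the summation over $(\mathbf{x}_{J^c},\bm{\epsilon}_{J^c})$ and is literally the same on the two sides. The structural input is that the operators $\widehat{B}_{s_r\cdots s_{N-1}}$ (for any value of the spectral parameter) act trivially on the $\sigma(N)$-th tensor leg of $M^{\underline{\ell}}$, which for $\sigma=s_r\cdots s_{N-1}$ is the $r$-th leg, whereas $B^{\ell_r}(y)$ acts on that leg alone and, by \eqref{actionABCD}, sends $m_n^{\ell_r}$ to $\frac{\sinh(\eta)}{\sinh(y+(\frac12+\ell_r)\eta)}\mathrm{e}^{(n-\ell_r-\frac12)\eta}m_{n+1}^{\ell_r}$. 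Hence $\bigl(\prod_{i\in J}\widehat{B}_{s_r\cdots s_{N-1}}(-\epsilon_ix_i-\frac{\eta}{2};\mathbf{t})\bigr)\Omega$ is a vector $u_J(\mathbf{x}_J,\bm{\epsilon}_J;\mathbf{t})$ carrying $m_1^{\ell_r}$ in the $r$-th slot, and applying $\prod_{j\in J^c}B^{\ell_r}(y_j)$ to it promotes the $r$-th slot to $m_{d+1}^{\ell_r}$ while producing the scalar $\prod_{j\in J^c}\frac{\sinh(\eta)}{\sinh(y_j+(\frac12+\ell_r)\eta)}$ times the factor $\mathrm{e}^{\eta\sum_{k=1}^{d}(k-\ell_r-\frac12)}$ (the exponential prefactor of $B^{\ell_r}$ depends only on the running index, so this outcome is independent of the order of the factors and of $(\mathbf{x}_{J^c},\bm{\epsilon}_{J^c})$). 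Since the $\widehat{B}$-arguments $-\epsilon_ix_i-\frac{\eta}{2}$ in \eqref{lhs} and \eqref{rhs} coincide, the resulting vector $\widetilde{u}_J$ (equal to $u_J$ with $m_1^{\ell_r}$ replaced by $m_{d+1}^{\ell_r}$) is common to the two sides; moreover on the left $\mathcal{K}^{\xi_+,\ell_r}(t_r+\frac{\tau}{2})$, being diagonal and applied to a vector whose $r$-th slot is $m_{d+1}^{\ell_r}$, just contributes $C_{d+1}^{\ell_r}(t_r+\frac{\tau}{2};\xi_+)$ by Definition~\ref{Kmatrixdef}.

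Using the assumed mero-uniform (hence absolute) convergence I would then regroup the sums so that $\mathcal{L}_r^{(d)}(\mathbf{t})$ and $\mathcal{R}_r^{(d)}(\mathbf{t})$ each equal $\mathrm{e}^{\eta\sum_{k=1}^d(k-\ell_r-\frac12)}(\sinh\eta)^d$ times $\sum_{J:\,|J|=S-d}\sum_{\mathbf{x}_J,\bm{\epsilon}_J}\bigl(\sum_{\mathbf{x}_{J^c},\bm{\epsilon}_{J^c}}(\,\cdots\,)\bigr)\widetilde{u}_J(\mathbf{x}_J,\bm{\epsilon}_J;\mathbf{t})$, and the task is to identify the two inner sums. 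On the left, the sinh-prefactor $\prod_{i=1}^S\frac{\sinh(\pm x_i+t_r+\ell_r\eta)}{\sinh(\pm x_i+t_r-\ell_r\eta)}$ of \eqref{lhs}, the weight $\mathcal{Y}_{s_r\cdots s_{N-1}}^{\xi_-,\bm{\epsilon},J}(\mathbf{x};e_r\mathbf{t})$, and the denominators $\prod_{j\in J^c}\sinh(-\epsilon_jx_j+t_r+\ell_r\eta)$ produced by the $B^{\ell_r}$-coefficients assemble, using that the $r$-th coordinate of $e_r\mathbf{t}$ is $-t_r$, into exactly $m_r^{\bm{\epsilon},J}(\mathbf{x};e_r\mathbf{t})$; so the left inner sum equals $C_{d+1}^{\ell_r}(t_r+\frac{\tau}{2};\xi_+)\sum_{\mathbf{x}_{J^c},\bm{\epsilon}_{J^c}}w^{(S)}(\mathbf{x};\mathbf{t};\xi_+,\xi_-)\,m_r^{\bm{\epsilon},J}(\mathbf{x};e_r\mathbf{t})$.

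For the right inner sum I would first rewrite $w^{(S)}(\mathbf{x};\mathbf{t}+\tau\bm e_r;\xi_+,\xi_-)$ in terms of $w^{(S)}(\mathbf{x};\mathbf{t};\xi_+,\xi_-)$. Since by \eqref{form} the weight has the form $w^{(S)}(\mathbf{x};\mathbf{t};\xi_+,\xi_-)=\bigl(\prod_{r'=1}^N\prod_{i=1}^S F^{\ell_{r'}}(t_{r'}\pm x_i)\bigr)G^{(S)}_{\xi_+,\xi_-}(\mathbf{x})$ with $G^{(S)}_{\xi_+,\xi_-}$ independent of $\mathbf{t}$ and $F^{\ell_r}$ subject to \eqref{form2}, the functional equation $F^{\ell_r}(x+\tau)=\frac{\sinh(x+\tau-\ell_r\eta)}{\sinh(x+\tau+\ell_r\eta)}F^{\ell_r}(x)$ gives $w^{(S)}(\mathbf{x};\mathbf{t}+\tau\bm e_r;\xi_+,\xi_-)=\bigl(\prod_{i=1}^S\frac{\sinh(t_r+\tau\pm x_i-\ell_r\eta)}{\sinh(t_r+\tau\pm x_i+\ell_r\eta)}\bigr)w^{(S)}(\mathbf{x};\mathbf{t};\xi_+,\xi_-)$, and by the elementary identities $\sinh(\pm x_i+a)=\sinh(x_i+a)\sinh(-x_i+a)$ and $\sinh(-u)=-\sinh(u)$ the bracketed factor equals $\prod_{i=1}^S\frac{\sinh(\pm x_i-t_r-\tau+\ell_r\eta)}{\sinh(\pm x_i-t_r-\tau-\ell_r\eta)}$, i.e. the sinh-prefactor of $m_r^{\bm{\epsilon},J}$ evaluated at $\mathbf{t}+\tau\bm e_r$. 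Combining this with the denominators $\prod_{j\in J^c}\sinh(-\epsilon_jx_j-t_r-\tau+\ell_r\eta)$ produced by the $B^{\ell_r}$-coefficients in \eqref{rhs} (which reproduce the $J^c$-denominator of $m_r^{\bm{\epsilon},J}$ at $\mathbf{t}+\tau\bm e_r$) and the weight $\mathcal{Y}_{s_r\cdots s_{N-1}}^{\xi_-,\bm{\epsilon},J}(\mathbf{x};\mathbf{t}+\tau\bm e_r)$, the right inner sum becomes $\sum_{\mathbf{x}_{J^c},\bm{\epsilon}_{J^c}}w^{(S)}(\mathbf{x};\mathbf{t};\xi_+,\xi_-)\,m_r^{\bm{\epsilon},J}(\mathbf{x};\mathbf{t}+\tau\bm e_r)$.

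Now the hypothesis is precisely the statement that these two inner sums coincide for every $J$ with $|J|=S-d$ and every $(\mathbf{x}_J,\bm{\epsilon}_J)$; summing against the common vectors $\widetilde{u}_J(\mathbf{x}_J,\bm{\epsilon}_J;\mathbf{t})$ then yields $\mathcal{L}_r^{(d)}(\mathbf{t})=\mathcal{R}_r^{(d)}(\mathbf{t})$. (No linear independence of the $\widetilde{u}_J$ is needed, since the hypothesis is used only in the direction ``scalar identity $\Rightarrow$ vector identity''.) I expect the main obstacle to be the trigonometric bookkeeping underlying the last two paragraphs: correctly matching the $\mathbf{t}$-shifted sinh-prefactor of $m_r^{\bm{\epsilon},J}$ with the $F^{\ell_r}$-functional-equation factors via oddness of $\sinh$, handling the shift conventions hidden in $e_r\mathbf{t}$ and $\mathbf{t}+\tau\bm e_r$, splitting each $B^{\ell_r}$-coefficient into the harmless running-index exponential and the $x_j$-dependent $\sinh$-factor that actually enters $m_r^{\bm{\epsilon},J}$, and verifying that the accumulated $(\mathbf{x}_{J^c},\bm{\epsilon}_{J^c})$-independent constants $\mathrm{e}^{\eta\sum_{k=1}^d(k-\ell_r-\frac12)}(\sinh\eta)^d$ agree on the two sides so that they cancel.
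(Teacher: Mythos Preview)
Your proposal is correct and follows essentially the same route as the paper: compute $\prod_{j\in J^c}B^{\ell_r}(-\epsilon_jx_j-\tfrac{\eta}{2}+u)m_1^{\ell_r}$ explicitly (yielding the constant $\sinh^d(\eta)\,\mathrm{e}^{\eta(d^2/2-\ell_r d)}$ times $m_{d+1}^{\ell_r}$ divided by $\prod_{j\in J^c}\sinh(-\epsilon_jx_j+u+\ell_r\eta)$), use diagonality of $\mathcal{K}^{\xi_+,\ell_r}$ on the $r$-th slot to extract $C_{d+1}^{\ell_r}(t_r+\tfrac{\tau}{2};\xi_+)$, and then rewrite $w^{(S)}(\mathbf{x};\mathbf{t}+\tau\bm e_r;\xi_+,\xi_-)$ via the functional equation for $F^{\ell_r}$ so that both sides carry the same $(\mathbf{x}_{J^c},\bm{\epsilon}_{J^c})$-independent vector and constant, reducing the equality to the stated scalar hypothesis. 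The paper's proof does exactly this, only displaying the intermediate sufficient condition (with $w^{(S)}(\mathbf{x};\mathbf{t}+\tau\bm e_r)$ still on the right) before applying the weight relation, whereas you absorb that step directly.
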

\begin{proof}
Recall that
\[
\mathcal{K}^{\xi_+,\ell_r}(t_r+\frac{\tau}{2})m_{d+1}^{\ell_r}=C_{d+1}^{\ell_r}(t_r+\frac{\tau}{2};\xi_+)
m_{d+1}^{\ell_r},
\]
see Definition \ref{Kmatrixdef}.
Since
\[
\Bigl(\prod_{j\in J^c}B^{\ell_r}(-\epsilon_jx_j-\frac{\eta}{2}+u)\Bigr)
m_1^{\ell_r}=
\frac{\sinh^d(\eta)\mathrm{e}^{\eta(\frac{d^2}{2}-\ell_rd)}}{\prod_{j\in J^c}\sinh(-\epsilon_jx_j+u+\ell_r\eta)}
m_{d+1}^{\ell_r}
\]
by \eqref{actionABCD}
we thus have
\begin{equation*}
\begin{split}
&\mathcal{K}^{\xi_+,\ell_r}(t_r+\frac{\tau}{2})
\Bigl(\prod_{j\in J^c}B^{\ell_r}(-\epsilon_jx_j-\frac{\eta}{2}+t_r)\Bigr) m_1^{\ell_r}= \\
&\qquad\qquad = \frac{\sinh^d(\eta)C_{d+1}^{\ell_r}(t_r+\frac{\tau}{2};\xi_+)
\mathrm{e}^{\eta(\frac{d^2}{2}-\ell_rd)}}{\prod_{j\in J^c}\sinh(-\epsilon_jx_j+t_r+\ell_r\eta)}m_{d+1}^{\ell_r},\\
& \Bigl(\prod_{j\in J^c}B^{\ell_r}(-\epsilon_jx_j-\frac{\eta}{2}-\tau-t_r)\Bigr)m_1^{\ell_r}=\\
&\qquad\qquad=\frac{\sinh^d(\eta)\mathrm{e}^{\eta(\frac{d^2}{2}-\ell_rd)}}
{\prod_{j\in J^c}\sinh(-\epsilon_jx_j-\tau-t_r+\ell_r\eta)}m_{d+1}^{\ell_r}.
\end{split}
\end{equation*}
Taking the expressions \eqref{lhs} and \eqref{rhs}
for $\mathcal{L}_r^{(m)}(\mathbf{t})$
and $\mathcal{R}_r^{(m)}(\mathbf{t})$
into account we conclude that
$\mathcal{L}_r^{(d)}(\mathbf{t})=\mathcal{R}_r^{(d)}(\mathbf{t})$ if
for all subsets $J\subseteq\{1,\ldots,S\}$ of cardinality $S-d$ and for
all $\mathbf{x}_J$ and $\bm{\epsilon}_J$,
\begin{equation*}
\begin{split}
&C_{d+1}^{\ell_r}(t_r+\frac{\tau}{2};\xi_+)
\sum_{\mathbf{x}_{J^c},\bm{\epsilon}_{J^c}}w^{(S)}(\mathbf{x};\mathbf{t};
\xi_+,\xi_-)m_r^{\bm{\epsilon},J}(\mathbf{x};e_r\mathbf{t})=\\
& \qquad =\sum_{\mathbf{x}_{J^c},\bm{\epsilon}_{J^c}}w^{(S)}(\mathbf{x};
\mathbf{t}+\tau \bm e_r;\xi_+,\xi_-) \\
& \qquad \qquad \times \Bigl(
\prod_{i=1}^S\frac{\sinh(\pm x_i-t_r-\tau-\ell_r\eta)}
{\sinh(\pm x_i-t_r-\tau+\ell_r\eta)}\Bigr)
m_r^{\bm{\epsilon},J}(\mathbf{x};\mathbf{t}+\tau \bm e_r).
\end{split}
\end{equation*}
The lemma now follows from the fact that
\[
w^{(S)}(\mathbf{x};\mathbf{t}+\tau \bm e_r;\xi_+,\xi_-)=\Bigl(\prod_{i=1}^S\frac{\sinh(\pm x_i-t_r-\tau+\ell_r\eta)}
{\sinh(\pm x_i-t_r-\tau-\ell_r\eta)}\Bigr)w^{(S)}(\mathbf{x};\mathbf{t};\xi_+,\xi_-),
\]
which is a direct consequence of the specific form \eqref{form}, \eqref{form2} of the weight function $w^{(S)}(\mathbf{x};\mathbf{t})$.
\end{proof}
In the remainder of this subsection we fix $d\in\{1,\ldots,S\}$,
$r\in\{1,\ldots,N\}$, a subset $J\subseteq\{1,\ldots,S\}$ of cardinality
$S-d$, as well as $\mathbf{x}_J$ and $\bm{\epsilon}_J$, which we all suppress from the notations. Set for
$\bm{\epsilon}_{J^c}\in\{\pm\}^d$,
\begin{equation*}
\begin{split}
\Lambda_{r,\bm{\epsilon}_{J^c}}(\mathbf{t})&:=\sum_{\mathbf{x}_{J^c}}
w^{(S)}(\mathbf{x};\mathbf{t};\xi_+,\xi_-)m_r^{\bm{\epsilon},J}(\mathbf{x};
e_r \mathbf t),\\
\Upsilon_{r,\bm{\epsilon}_{J^c}}(\mathbf{t})&:=\sum_{\mathbf{x}_{J^c}}
w^{(S)}(\mathbf{x};\mathbf{t};\xi_+,\xi_-)m_r^{\bm{\epsilon},J}(\mathbf{x};
\mathbf{t}+\tau \bm e_r).
\end{split}
\end{equation*}
In view of the previous lemma, the desired identity \eqref{TODOstep1} follows if
\begin{equation}\label{TODOrewrite}
C_{d+1}^{\ell_r}(t_r+\frac{\tau}{2};\xi_+)\sum_{\bm{\epsilon}_{J^c}\in\{\pm\}^d}
\Lambda_{r,\bm{\epsilon}_{J^c}}(\mathbf{t})=
\sum_{\bm{\epsilon}_{J^c}\in\{\pm\}^d}
\Upsilon_{r,\bm{\epsilon}_{J^c}}(\mathbf{t}).
\end{equation}
We write $m_r(\mathbf{x}_{J^c};\mathbf{t})$ for
$m_r^{\bm{\epsilon},J}(\mathbf{x};\mathbf{t})$ with
$\bm{\epsilon}_{J^c}$ the $d$-tuple $(-,-,\cdots,-)$
of minus signs,
\begin{equation}\label{mbasis}
\begin{split}
&m_r(\mathbf{x}_{J^c};\mathbf{t})=(-1)^d
\Bigl(\prod_{j\in J^c}\Bigl(\frac{\sinh(\xi_-+x_j-\frac{\eta}{2})}
{\sinh(-t_r+x_j-\ell_r\eta)}\prod_{\stackrel{s=1}{s\not=r}}^N
\frac{\sinh(t_s+x_j+\ell_s\eta)}{\sinh(t_s+x_j-\ell_s\eta)}\Bigr)\Bigr) \hspace{-15mm}\\
&\qquad\times\Bigl(\prod_{(i,j)\in J\times J^c}\frac{\sinh(x_j\pm x_i-\eta)}
{\sinh(x_j\pm x_i)}\Bigr)\Bigl(\prod_{\stackrel{i,i^\prime\in J:}
{i<i^\prime}}\frac{\sinh(\epsilon_ix_i+\epsilon_{i^\prime}x_{i^\prime}+\eta)}
{\sinh(\epsilon_ix_i+\epsilon_{i^\prime}x_{i^\prime})}\Bigr)\\
&\qquad\times\Bigl(\prod_{\stackrel{j,j^\prime\in J^c:}{j<j^\prime}}
\frac{\sinh(x_j+x_{j^\prime}-\eta)}{\sinh(x_j+x_{j^\prime})}\Bigr) \\
&\qquad \times \Bigl(\prod_{i\in J}\Bigl(\epsilon_i
\sinh(\xi_--\epsilon_ix_i-\frac{\eta}{2})
\prod_{\stackrel{s=1}{s\not=r}}^N\frac{\sinh(t_s-\epsilon_ix_i+\ell_s\eta)}
{\sinh(t_s-\epsilon_ix_i-\ell_s\eta)}\Bigr)\Bigr).
\end{split}
\end{equation}
\begin{lem}\label{reduction}
Suppose that for all $i\in\{1,\ldots,S\}$,
\begin{equation}\label{Gstep}
\begin{split}
& G_{\xi_+,\xi_-}(\mathbf{x}-\tau \bm e_i)=\frac{\sinh(\xi_-+x_i-\frac{\eta}{2})
\sinh(\xi_++x_i-\frac{\tau}{2}-\frac{\eta}{2})}
{\sinh(\xi_--x_i+\tau-\frac{\eta}{2})\sinh(\xi_+-x_i+\frac{\tau}{2}-
\frac{\eta}{2})}\\
& \qquad\qquad \times\Bigl(\prod_{\stackrel{i^\prime=1}{i^\prime\not=i}}^S
\frac{\sinh(x_i\pm x_{i^\prime}-\tau)\sinh(x_i\pm x_{i^\prime}-\eta)}
{\sinh(x_i\pm x_{i^\prime}-\tau+\eta)\sinh(x_i\pm x_{i^\prime})}
\Bigr)G_{\xi_+,\xi_-}(\mathbf{x}).
\end{split}
\end{equation}
Then
\begin{equation*}
\begin{split}
\Lambda_{r,\bm{\epsilon}_{J^c}}(\mathbf{t})&=(-1)^{\#J_+^c}
\sum_{\mathbf{x}_{J^c}}w^{(S)}(\mathbf{x};\mathbf{t};\xi_+,\xi_-)
q_{J_+^c}(\mathbf{x}_{J^c};t_r)m_r(\mathbf{x}_{J^c};e_r\mathbf{t}),\\
\Upsilon_{r,\bm{\epsilon}_{J^c}}(\mathbf{t})&=(-1)^{\#J_+^c}
\sum_{\mathbf{x}_{J^c}}w^{(S)}(\mathbf{x};\mathbf{t};\xi_+,\xi_-)
q_{J_+^c}(\mathbf{x}_{J^c};-t_r-\tau)m_r(\mathbf{x}_{J^c};\mathbf{t}+\tau \bm e_r)
\end{split}
\end{equation*}
with $J_+^c:=\{j\in J^c \, | \, \epsilon_j=+\}$, $J_-^c:=J^c\setminus J_+^c$ and
\begin{equation*}
\begin{split}
q_{J_+^c}(\mathbf{x}_{J^c};t_r):=&\Bigl(
\prod_{\stackrel{j,j^\prime\in J_+^c:}{j<j^\prime}}\frac{\sinh(x_j+x_{j^\prime}-
\tau-\eta)}{\sinh(x_j+x_{j^\prime}-\tau+\eta)}\Bigr)\\
\times&\Bigl(
\prod_{j\in J_-^c}\prod_{j^\prime\in J_+^c}
\frac{\sinh(x_{j^\prime}-x_j-\eta)\sinh(x_{j^\prime}+x_j-\tau)}
{\sinh(x_{j^\prime}-x_j)\sinh(x_{j^\prime}+x_j-\tau+\eta)}\Bigr)\\
\times&
\Bigl(\prod_{j\in J_+^c}\frac{\sinh(\xi_++x_j-\frac{\tau}{2}-\frac{\eta}{2})
\sinh(t_r+x_j+\ell_r\eta)}{\sinh(\xi_+-x_j+\frac{\tau}{2}-\frac{\eta}{2})
\sinh(t_r-x_j+\tau+\ell_r\eta)}\Bigr).
\end{split}
\end{equation*}
\end{lem}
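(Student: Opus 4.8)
The plan is to reduce $\Lambda_{r,\bm{\epsilon}_{J^c}}(\mathbf{t})$ and $\Upsilon_{r,\bm{\epsilon}_{J^c}}(\mathbf{t})$ to sums over the all-minus sign pattern by performing, for each index $j\in J_+^c$, the substitution $x_j\mapsto -x_j$ in the summation; by the assumed mero-uniform convergence this reindexing is legitimate, just as in the spin-$\tfrac{1}{2}$ case of \cite{RSV}. The substitution has three effects on the summand. First, the factor $\prod_{s=1}^N\prod_{i=1}^SF^{\ell_s}(t_s\pm x_i)$ of $w^{(S)}$ is unchanged, because $F^{\ell_s}(t_s+x_i)F^{\ell_s}(t_s-x_i)$ is even in $x_i$, so only $G_{\xi_+,\xi_-}(\mathbf{x})$ is affected and I would control it using the shift equation \eqref{Gstep}. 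Second, inspection of \eqref{m} shows that $x_j$ enters $m_r^{\bm{\epsilon},J}(\mathbf{x};\cdot)$ only through the product $\epsilon_jx_j$, apart from the global sign $\prod_{j'\in J^c}\epsilon_{j'}$; hence $m_r^{\bm{\epsilon},J}\big|_{x_j\mapsto-x_j}=-\,m_r^{\bm{\epsilon}',J}$ with $\bm{\epsilon}'$ obtained from $\bm{\epsilon}$ by flipping $\epsilon_j$, and carrying this out for all $j\in J_+^c$ turns $m_r^{\bm{\epsilon},J}$ into $(-1)^{\#J_+^c}m_r(\mathbf{x}_{J^c};\cdot)$, which produces the prefactor $(-1)^{\#J_+^c}$ in the statement. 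Third, the substitution sends the summation lattice to its reflection in the $J_+^c$-coordinates, and bringing it back while re-expressing $G_{\xi_+,\xi_-}$ through \eqref{Gstep} is what generates the remaining $\tau$-dependent factors of $q_{J_+^c}$.

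Granting these three effects, I would then carry out the remaining bookkeeping: collect the $\sinh$-quotients produced by \eqref{Gstep} for the indices in $J_+^c$ together with the factors of $m_r^{\bm{\epsilon},J}$ coming from the products over $J\times J^c$ and over unordered pairs inside $J^c$ in \eqref{m}, and cancel them against the corresponding factors of $m_r(\mathbf{x}_{J^c};\cdot)$ in \eqref{mbasis}. What should survive is precisely $q_{J_+^c}(\mathbf{x}_{J^c};t_r)$: its first product corresponds to pairs $j<j'$ inside $J_+^c$, its second to mixed pairs $j\in J_-^c$, $j'\in J_+^c$, and its third to the boundary factors $\sinh(\xi_+\pm\cdot)$ coming from the $g_{\xi_+,\xi_-}$-part of \eqref{Gstep} together with the $\ell_r$-dependent factors built into $m_r^{\bm{\epsilon},J}$. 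The computation for $\Upsilon_{r,\bm{\epsilon}_{J^c}}$ should run identically, the only difference being that the $\mathbf{t}$-argument is $\mathbf{t}+\tau\bm e_r$ instead of $e_r\mathbf{t}$, which replaces $t_r$ by $-t_r-\tau$ in the last product of $q_{J_+^c}$.

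The hard part will be this second, bookkeeping step: tracking signs and keeping straight which trigonometric factors survive, and in particular checking that the $\tau$-shifts introduced by \eqref{Gstep} are compensated exactly by the reindexing of the summation lattice so that no spurious factors remain. This is the step where the precise shape of the functional equations for $g_{\xi_+,\xi_-}$, $h$ and $F^\ell$ — equivalently, the hypothesis \eqref{Gstep} — is indispensable, and it is the natural, if more intricate, higher-spin counterpart of the corresponding manipulation in \cite{RSV}.
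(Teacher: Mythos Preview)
Your approach has a genuine gap at its very first step. The summation in $\Lambda_{r,\bm{\epsilon}_{J^c}}(\mathbf{t})$ runs over $x_j\in x_{0,j}+\tau\mathbb{Z}$ for each $j\in J^c$, with $\mathbf{x}_0$ \emph{generic}. The reflection $x_j\mapsto -x_j$ sends this to $-x_{0,j}+\tau\mathbb{Z}$, which is a different lattice; the hypothesis \eqref{Gstep} is a one-step $\tau$-shift relation and cannot transport you from the reflected lattice back to the original one. Your remark that ``bringing it back while re-expressing $G_{\xi_+,\xi_-}$ through \eqref{Gstep}'' will do the job is exactly the point where the argument breaks: no finite composition of $\tau$-shifts relates the two lattices when $2x_{0,j}\notin\tau\mathbb{Z}$. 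Consequently the appearance of $\tau$ in $q_{J_+^c}$, which you correctly note must be explained, has no source in your scheme.

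The paper's mechanism is different. One fixes a single $k\in J_+^c$ and performs the lattice-preserving shift $x_k\mapsto x_k-\tau$ in the sum. The hypothesis \eqref{Gstep} (together with the functional equation for $F^{\ell}$) gives $w^{(S)}(\mathbf{x}-\tau\bm e_k;\mathbf{t})=\beta_k(\mathbf{x};\mathbf{t})\,w^{(S)}(\mathbf{x};\mathbf{t})$ with an explicit $\beta_k$. A direct computation from \eqref{m} then shows
\[
\beta_k(\mathbf{x};\mathbf{t})\,m_r^{\bm{\epsilon},J}(\mathbf{x}-\tau\bm e_k;e_r\mathbf{t})
=-\gamma_k^{\bm{\epsilon}_{J^c}}(\mathbf{x}_{J^c};t_r)\,m_r^{\bm{\epsilon}^{\{k\},-},J}(\mathbf{x};e_r\mathbf{t}),
\]
i.e.\ the $\tau$-shift, combined with the weight ratio $\beta_k$, \emph{flips the sign} $\epsilon_k$ from $+$ to $-$ at the cost of an explicit factor $\gamma_k$. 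Iterating over all $k\in J_+^c$ (formally, by induction on $\#J_+^c$, with an intermediate family $\widetilde{q}_I$ keeping track of the accumulated factors) yields the stated formula; the $\gamma_k$ multiply to $q_{J_+^c}$. The analogous identity with $e_r\mathbf{t}$ replaced by $\mathbf{t}+\tau\bm e_r$ and $t_r$ by $-t_r-\tau$ gives the claim for $\Upsilon_{r,\bm{\epsilon}_{J^c}}$. Your observation that $m_r^{\bm{\epsilon},J}$ depends on $x_j$ ($j\in J^c$) essentially through $\epsilon_jx_j$ is correct and is part of what makes this computation go through, but the sign-flip is effected by a $\tau$-shift, not by a reflection of the summation variable.
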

\begin{proof}
The formula for $\Lambda_{r,\bm{\epsilon}_{J^c}}(\mathbf{t})$ is correct if $\bm{\epsilon}_{J^c}$ is the $d$-tuple $(-,-\cdots,-)$ of minus signs since
$q_\emptyset(\mathbf{x}_{J^c};t_r)=1$ (empty products are equal to one by convention).

Fix $\bm{\epsilon}_{J^c}\in\{\pm\}^d$ and $I \subset J^c_+$.
Write $\bm{\epsilon}_{J^c}^{I,-}$ for the $d$-tuple of signs obtained from $\bm{\epsilon}_{J^c}$ by replacing $\epsilon_i=+$ by $-$ for all $i\in I$.
Similarly, we write $\bm{\epsilon}^{I,-}$ for the $S$-tupe of signs obtained from $\bm{\epsilon}$ by replacing $\epsilon_i=+$ by $-$ for all $i\in I$.

Fix $k\in J_+^c$ and rewrite $\Lambda_{r,\bm{\epsilon}_{J^c}}(\mathbf{t})$ as
\[
\Lambda_{r,\bm{\epsilon}_{J^c}}(\mathbf{t})=
\sum_{\mathbf{x}_{J^c}}w^{(S)}(\mathbf{x}-\tau \bm e_k;\mathbf{t};\xi_+,\xi_-)
m_r^{\bm{\epsilon},J}(\mathbf{x}-\tau \bm e_k;e_r\mathbf{t}).
\]
By the assumptions on $w^{(S)}(\mathbf{x};\mathbf{t})$ we have
\[
w^{(S)}(\mathbf{x}-\tau \bm e_k;\mathbf{t};\xi_+,\xi_-)=
\beta_k(\mathbf{x};\mathbf{t})w^{(S)}(\mathbf{x};\mathbf{t};\xi_+,\xi_-)
\]
with
\begin{equation*}
\begin{split}
\beta_k(\mathbf{x};\mathbf{t}):=&
\Bigl(\prod_{s=1}^N\frac{\sinh(t_s+x_k+\ell_s\eta)\sinh(t_s-x_k+\tau-\ell_s\eta)}
{\sinh(t_s+x_k-\ell_s\eta)\sinh(t_s-x_k+\tau+\ell_s\eta)}\Bigr)\\
\times&\frac{\sinh(\xi_-+x_k-\frac{\eta}{2})\sinh(\xi_++x_k-\frac{\tau}{2}-
\frac{\eta}{2})}{\sinh(\xi_--x_k+\tau-\frac{\eta}{2})
\sinh(\xi_+-x_k+\frac{\tau}{2}-\frac{\eta}{2})}\\
\times&\Bigl(\prod_{\stackrel{k^\prime=1}{k^\prime\not=k}}^S
\frac{\sinh(x_k\pm x_{k^\prime}-\tau)\sinh(x_k\pm x_{k^\prime}-\eta)}
{\sinh(x_k\pm x_{k^\prime}-\tau+\eta)\sinh(x_k\pm x_{k^\prime})}\Bigr).
\end{split}
\end{equation*}
In addition, by a direct computation using \eqref{m},
\[
\beta_k(\mathbf{x};\mathbf{t})m_r^{\bm{\epsilon},J}(\mathbf{x}-\tau \bm e_k;e_r\mathbf{t})=
-\gamma_k^{\bm{\epsilon}_{J^c}}(\mathbf{x}_{J^c};t_r) m_r^{\bm{\epsilon}^{\{k\},-},J}(\mathbf{x};e_r\mathbf{t})
\]
with
\begin{equation*}
\begin{split}
\gamma_k^{\bm{\epsilon}_{J^c}}(\mathbf{x}_{J^c};t_r):=&
\Bigl(\prod_{j\in J^c\setminus \{k\}}\frac{\sinh(x_k+\epsilon_jx_j-\eta)\sinh(x_k-\epsilon_jx_j-\tau)}{\sinh(x_k+\epsilon_jx_j)\sinh(x_k-\epsilon_jx_j-\tau+\eta)}\Bigr) \\
 \times & \frac{\sinh(\xi_++x_k-\frac{\tau}{2}-\frac{\eta}{2})\sinh(t_r+x_k+\ell_r\eta)}{\sinh(\xi_+-x_k+\frac{\tau}{2}-\frac{\eta}{2})\sinh(t_r-x_k+\tau+\ell_r\eta)}.
\end{split}
\end{equation*}
Hence
\[
\Lambda_{r,\bm{\epsilon}_{J^c}}(\mathbf{t})= -\sum_{\mathbf{x}_{J^c}}w^{(S)}(\mathbf{x};\mathbf{t};\xi_+,\xi_-)
\gamma_k^{\bm{\epsilon}_{J^c}}(\mathbf{x}_{J^c};t_r) m_r^{\bm{\epsilon}^{\{k\},-},J}(\mathbf{x};e_r\mathbf{t}).
\]
This in particular proves the desired expression of
$\Lambda_{r,\bm{\epsilon}_{J^c}}(\mathbf{t})$ if $\epsilon_k=+$ and $\epsilon_j=-$ for $j\in J^c\setminus\{k\}$.

The formula for arbitrary $\bm{\epsilon}_{J^c}\in\{\pm\}^d$ follows by an induction argument with respect to
$\# J^c_+$ using the following observation.
For a subset $I\subseteq J_+^c$ set
\begin{equation*}
\begin{split}
\widetilde{q}_I(\mathbf{x}_{J^c};t_r):=&
\Bigl(\prod_{\stackrel{i,i^\prime\in I:}{i<i^\prime}}
\frac{\sinh(x_i+x_{i^\prime}-\tau-\eta)}{\sinh(x_i+x_{i^\prime}-\tau+\eta)}\Bigr)\\
\times&\Bigl(
\prod_{i\in I}\frac{\sinh(\xi_++x_i-\frac{\tau}{2}-\frac{\eta}{2})
\sinh(t_r+x_i+\ell_r\eta)}{\sinh(\xi_+-x_i+\frac{\tau}{2}-\frac{\eta}{2})
\sinh(t_r-x_i+\tau+\ell_r\eta)}\Bigr)\\
\times&\prod_{(i,j)\in I\times J^c\setminus I}
\frac{\sinh(x_i+\epsilon_jx_j-\eta)\sinh(x_i-\epsilon_jx_j-\tau)}
{\sinh(x_i+\epsilon_jx_j)\sinh(x_i-\epsilon_jx_j-\tau+\eta)}.
\end{split}
\end{equation*}
Then $\widetilde{q}_\emptyset(\mathbf{x}_{J^c};t_r)=1$, $\widetilde{q}_{J_+^c}(\mathbf{x}_{J^c};t_r)=q_{J_+^c}(\mathbf{x}_{J^c};t_r)$ and
for a subset $I\subset J_+^c$ and $k\in J_+^c\setminus I$,
\[
\frac{\widetilde{q}_{I\cup\{k\}}(\mathbf{x}_{J^c};t_r)}{\widetilde{q}_I(\mathbf{x}_{J^c}-\tau \bm e_k;t_r)} = \gamma_k^{\bm{\epsilon}^{I,-}_{J^c}}(\mathbf{x}_{J^c};t_r) .
\]

The alternative expression for $\Upsilon_{r,\bm{\epsilon}_{J^c}}(\mathbf{t})$ follows from a
similar computation, now using the observation that for $k\in J_+^c$,
\[
\beta_k(\mathbf{x};\mathbf{t})m_r^{\bm{\epsilon},J}(\mathbf{x}-\tau \bm e_k;\mathbf{t}+\tau \bm e_r)=
-\gamma_k^{\bm{\epsilon}_{J^c}}(\mathbf{x}_{J^c};-t_r-\tau)m_r^{\bm{\epsilon}^{\{k\},-},J}(\mathbf{x};\mathbf{t}+\tau \bm e_r). \qedhere
\]
\end{proof}
Note that \eqref{Gstep} is satisfied if
\[
G_{\xi_+,\xi_-}(\mathbf{x})=
\Bigl(\prod_{i=1}^Sg_{\xi_+,\xi_-}(x_i)\Bigr)\prod_{1\leq i<i^\prime\leq S}
h(x_i\pm x_{i^\prime})
\]
with $g_{\xi_+,\xi_-}$ and $h$ as in Theorem \ref{mr}.

By the explicit expression \eqref{mbasis} of $m_r(\mathbf{x}_{J^c};\mathbf{t})$
we have
\begin{equation}\label{mrel}
\begin{split}
\widetilde{m}_r(\mathbf{x}_{J^c};\mathbf{t}):=&
m_r(\mathbf{x}_{J^c};e_r\mathbf{t})
\prod_{j\in J^c}\sinh(t_r+x_j-\ell_r\eta)\\
=&m_r(\mathbf{x}_{J^c};\mathbf{t}+\tau \bm e_r)
\prod_{j\in J^c}\sinh(-t_r-\tau+x_j-\ell_r\eta).
\end{split}
\end{equation}
Combined with Lemma \ref{reduction}, it follows that
\eqref{TODOrewrite} is equivalent to
\begin{equation*}
\begin{split}
&\sum_{\mathbf{x}_{J^c}}w^{(S)}(\mathbf{x};\mathbf{t};\xi_+,\xi_-)\widetilde{m}_r(\mathbf{x}_{J^c};\mathbf{t}) C_{d+1}^{\ell_r}(t_r+\frac{\tau}{2};\xi_+) \\
& \hspace{50mm} \times \sum_{\bm{\epsilon}_{J^c}\in\{\pm\}^d} \frac{(-1)^{\#J_+^c}q_{J_+^c}(\mathbf{x}_{J^c};t_r)}{\prod_{j\in J^c}\sinh(t_r+x_j-\ell_r\eta)}=\\
&\,=\sum_{\mathbf{x}_{J^c}}w^{(S)}(\mathbf{x};\mathbf{t};\xi_+,\xi_-) \widetilde{m}_r(\mathbf{x}_{J^c};\mathbf{t}) \sum_{\bm{\epsilon}_{J^c}\in\{\pm\}^d} \frac{(-1)^{\#J_+^c}q_{J_+^c}(\mathbf{x}_{J^c};-t_r-\tau)}{\prod_{j\in J^c}\sinh(-t_r-\tau+x_j-\ell_r\eta)}.
\end{split}
\end{equation*}
Substituting the explicit expression \eqref{Cell}
of  $C_{n}^{\ell}(x;\xi)$, this is a direct consequence of the following lemma.
\begin{lem}\label{trigident}
Let $J\subseteq\{1,\ldots,S\}$ be a subset of cardinality $S-d$ and
$\bm{\epsilon}_{J^c}\in\{\pm\}^d$.
Then the finite sum
\[
\Bigl(\prod_{n=1}^d\sinh(\xi_+-t_r-\frac{\tau}{2}+(\ell_r+\frac{1}{2}-n)\eta)
\Bigr)\sum_{\bm{\epsilon}_{J^c}\in\{\pm\}^d}
\frac{(-1)^{\#J_+^c}q_{J_+^c}(\mathbf{x}_{J^c};t_r)}
{\prod_{j\in J^c}\sinh(t_r+x_j-\ell_r\eta)}
\]
is invariant under the exchange of $t_r$ by $-t_r-\tau$.
\end{lem}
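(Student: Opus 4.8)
The plan is to reduce the asserted symmetry to an identity between Laurent polynomials in $\mathrm{e}^{2t_r}$ and then to check it at finitely many values of $t_r$. Write $E(t_r)$ for the displayed expression in the statement. First I would multiply $E(t_r)$ by
\[
D(t_r):=\prod_{j\in J^c}\sinh(t_r+x_j-\ell_r\eta)\,\sinh(t_r-x_j+\tau+\ell_r\eta),
\]
which is itself invariant under $t_r\mapsto -t_r-\tau$: for each $j\in J^c$ the two hyperbolic sines are interchanged, the two sign changes cancelling. Inspecting the explicit formula for $q_{J_+^c}$ one sees that $D(t_r)$ cancels exactly the denominator $\prod_{j\in J^c}\sinh(t_r+x_j-\ell_r\eta)$ common to all summands, together with the denominators $\sinh(t_r-x_j+\tau+\ell_r\eta)$ ($j\in J_+^c$) hidden in $q_{J_+^c}$; since $D$ does not depend on $\bm{\epsilon}$, this shows that $\Phi(t_r):=D(t_r)E(t_r)$ is holomorphic in $t_r$ for generic values of the remaining variables, and a count of hyperbolic-sine factors shows it is a Laurent polynomial in $\mathrm{e}^{2t_r}$ of length at most $2d+1$. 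As $D$ is invariant, the lemma is equivalent to $\Phi(t_r)=\Phi(-t_r-\tau)$; the difference $\Phi(t_r)-\Phi(-t_r-\tau)$ is anti-invariant under $t_r\mapsto -t_r-\tau$ and still of length at most $2d+1$, hence lies in a $d$-dimensional space, so it suffices to verify $\Phi(\alpha)=\Phi(-\alpha-\tau)$ at $d$ generic values $\alpha$, no two of which are exchanged by the reflection.

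For these $d$ checks I would use the resonance points $\alpha_k:=-x_k+\ell_r\eta$, $k\in J^c$ (pairwise distinct and not exchanged by the reflection for generic $\mathbf{x}_{J^c}$). At $t_r=\alpha_k$ the factor $\sinh(t_r+x_k-\ell_r\eta)$ vanishes, so $D$ has a simple zero and $E$ a simple pole and $\Phi(\alpha_k)$ equals the associated residue; at the reflected point $t_r=-\alpha_k-\tau=x_k-\tau-\ell_r\eta$ the factor $\sinh(t_r-x_k+\tau+\ell_r\eta)$ vanishes, which kills every summand with $\epsilon_k=-$, and $\Phi(-\alpha_k-\tau)$ is again a residue. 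On both sides one is then left with a sum over $\bm{\epsilon}_{J^c\setminus\{k\}}\in\{\pm\}^{d-1}$ of products of hyperbolic sines; carrying out the remaining two-term $\epsilon_k$-summation by means of the single-variable identity \eqref{coeffcond4} — for $d=1$ this step \emph{is} \eqref{coeffcond4}, and it is what makes the prefactor and one of the numerator factors interchange under the reflection — produces, on each side, an explicit factor depending on $\mathbf{x}_{J^c},\xi_+,\eta,\tau,t_r$ times an $\bm{\epsilon}_{J^c\setminus\{k\}}$-sum. The equality $\Phi(\alpha_k)=\Phi(-\alpha_k-\tau)$ then amounts to a finite trigonometric identity in the remaining variables, which I would handle by the same strategy with $d$ replaced by $d-1$ (a nested induction); the base case $d=0$ is the tautology $1=1$.

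The main obstacle will be precisely this $\epsilon_k$-summation and the comparison of the two residues. One has to disentangle from $q_{J_+^c}$ the three blocks of factors coupling the index $k$ to the remaining indices — the pairs within $J_+^c$, the pairs in $J_-^c\times J_+^c$, and the single $\xi_+$-block for $k$ — combine the $\epsilon_k=\pm$ contributions via \eqref{coeffcond4}, and verify that the two residues then coincide. In effect what must be pushed through is a multivariate refinement of \eqref{coeffcond4}: a closed-form evaluation of $\sum_{\bm{\epsilon}_{J^c}}(-1)^{\#J_+^c}q_{J_+^c}(\mathbf{x}_{J^c};t_r)$ extending the one-variable relation
\[
1-q_{\{k\}}(\mathbf{x}_{\{k\}};t_r)=-\,\frac{\sinh\bigl(\xi_++t_r+(\ell_r-\tfrac12)\eta+\tfrac{\tau}{2}\bigr)\,\sinh(2x_k-\tau)}{\sinh\bigl(\xi_+-x_k+\tfrac{\tau}{2}-\tfrac{\eta}{2}\bigr)\,\sinh(t_r-x_k+\tau+\ell_r\eta)};
\]
once such a closed form is available the symmetry asserted in Lemma \ref{trigident} follows by inspection, exactly as in the displayed one-variable case. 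A final, small point to record explicitly is the length bound $2d+1$ for $\Phi$, which legitimises the reduction to checking $d$ points.
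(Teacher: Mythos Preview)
Your reduction to a Laurent-polynomial identity in $\mathrm{e}^{2t_r}$ is correct, and so is the count that an anti-invariant such polynomial of length $\le 2d+1$ is determined by its values at $d$ points. You also correctly identify the crux: what is really needed is a closed-form evaluation of $\sum_{\bm{\epsilon}_{J^c}}(-1)^{\#J_+^c}q_{J_+^c}(\mathbf{x}_{J^c};t_r)$ generalizing \eqref{coeffcond4}.

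The gap is in the inductive step. At the reflected point $-\alpha_k-\tau$ there is no $\epsilon_k$-summation to carry out (only $\epsilon_k=+$ survives), so your description ``on each side'' is inaccurate; and once $t_r$ has been specialized to $\alpha_k$ there is no longer a free variable playing the role of $t_r$, so the residual identity $\Phi(\alpha_k)=\Phi(-\alpha_k-\tau)$ is not an instance of the lemma with $d$ replaced by $d-1$. It is merely some trigonometric identity in the $x_j$'s, and ``the same strategy'' (anti-invariance in $t_r$, interpolation at $d$ points) no longer applies. An induction of this shape can sometimes be rescued by formulating a stronger inductive hypothesis---for instance, proving the closed form itself by induction on $d$ rather than the symmetry statement---but that is not what you have written.

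The paper takes a different route that avoids induction altogether. After the same clearing of denominators (and the cosmetic substitution $x_i\mapsto x_i+\tfrac{\tau}{2}$, $t_r\mapsto t_r-\tfrac{\tau}{2}$ to remove $\tau$), the sum over $\bm{\epsilon}_{J^c}$ is recognized as a signed orbit sum for the natural action of the Weyl group of type $\mathrm{C}_d$ on $\mathbb{C}[\mathrm{e}^{\pm 2x_1},\dots,\mathrm{e}^{\pm 2x_d}]$: the group acts simultaneously on $\mathbf{x}$ (by permutations and sign flips) and on the subsets $I\subseteq\{1,\dots,d\}$, and each summand transforms by the sign character. Hence the sum is $W$-anti-invariant in $\mathbf{x}$, so divisible by the Weyl denominator $\delta(\mathbf{x})=\prod_i\sinh(2x_i)\prod_{i<j}\sinh(x_i\pm x_j)$; a degree comparison shows the quotient is independent of $\mathbf{x}$, and evaluating at the single point $\mathbf{y}=(-\xi+(d-1)\eta,\dots,-\xi)$ (where all but one term vanish) yields the closed form $(-1)^d\prod_{i=1}^d\sinh(\xi+t+(\ell-i+1)\eta)$. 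The asserted $t_r\leftrightarrow -t_r-\tau$ symmetry is then immediate. This is cleaner than an interpolation-in-$t_r$ argument because the hard work is done once, in the $\mathbf{x}$-variables, via a structural observation.
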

The proof of the lemma is given in the next subsection. It completes
the proof of the main theorem (Theorem \ref{mr}).
\subsection{Proof of Lemma \ref{trigident}}\label{AppenA}
Let $J\subseteq\{1,\ldots,S\}$ be a subset of cardinality $S-d$ and
$\bm{\epsilon}_{J^c}\in\{\pm\}^d$.
Choose an identification of the fixed subset $J^c$ of cardinality $d$ with $\{1,\ldots,d\}$.
The choice of signs $\bm{\epsilon}_{J^c}\in\{\pm\}^d$ then is identified
with choosing a subset $I\subseteq\{1,\ldots,d\}$ by the rule
\[
I:=\{i\in\{1,\ldots,d\}\, | \, \epsilon_i=+\}.
\]
Write $\xi = \xi_+ - \frac{\eta}{2}$ and $\mathbf{x}=(x_1,\ldots,x_d)$.
Then the statement in Lemma \ref{trigident} is easily seen to be equivalent to the claim that
\begin{equation}  \begin{aligned}
F(\mathbf x;t)&:=
\Bigl( \prod_{i = 1}^d \frac{\sinh(\xi-t-\frac{\tau}{2} +(\ell + 1 - i)\eta)}{\sinh(t+x_i-\ell \eta)} \Bigr)\\
& \times\sum_{I \subseteq \{ 1,\ldots, d\}} \Biggl\{ (-1)^{\#I}
 \Bigl( \prod_{i,j \in I \atop i<j} \frac{\sinh(x_i+x_j-\tau-\eta)}{\sinh(x_i+x_j-\tau+\eta)} \Bigr) \\
& \qquad\quad \times  \biggl(  \prod_{i \in I} \frac{\sinh(\xi+x_i-\frac{\tau}{2})\sinh(t+x_i+\ell \eta)}{\sinh(\xi-x_i+\frac{\tau}{2})\sinh(t-x_i+\tau+\ell \eta)} \biggr) \\
& \qquad\quad \times \prod_{(i,j) \in I \times I^\mathrm{c}} \frac{\sinh(x_i-x_j-\eta)\sinh(x_i+x_j-\tau)}{\sinh(x_i-x_j)\sinh(x_i+x_j-\tau+\eta)}\biggr) \Biggr\}
\end{aligned} \end{equation}
satisfies
\begin{equation} \label{toprove}
F(\mathbf x; -t-\tau) = F(\mathbf x; t).
\end{equation}
By substituting $x_i \to x_i+\frac{\tau}{2}$ ($i=1,\ldots,d$) and $t \to t-\frac{\tau}{2}$ and clearing denominators in \eqref{toprove}, we obtain a trigonometric polynomial identity independent of $\tau$.
More precisely, for $i \in \{1,\ldots,d\}$ and $I \subseteq \{1,\ldots,d\}$ write $\epsilon_i^{(I)} = +$ if $i \in I$ and $\epsilon_i^{(I)} = -$ if $i \not\in I$; also, write $x_i^{(I)} = x_i - \epsilon_i^{(I)} \frac{\eta}{2}$.
For $I \subseteq \{1,\ldots,d\}$ we define
\begin{align*}
Q_I(\mathbf x;t) &:=  (-1)^{\#I} \Bigl( \prod_{i =1}^d \sinh(\xi+\epsilon_i^{(I)} x_i) \sinh(t +  \epsilon_i^{(I)}x_i + \ell \eta) \Bigr) \\
& \hspace{45mm} \prod_{1\leq i<j\leq d}\sinh(x_i^{(I)} \pm x_j^{(I)})
\end{align*}
and write
\[
V(\mathbf x;t) := \Bigl( \prod_{i=1}^d \sinh(\xi-t+(\ell - i + 1)\eta) \Bigr) \sum_{I \subseteq \{ 1,\ldots, d\}}Q_I(\mathbf x;t).
\]
Then \eqref{toprove} is equivalent to
\begin{equation} \label{Pidentity}
V(\mathbf x;t) = V(\mathbf x;-t).
\end{equation}
The identity \eqref{Pidentity} is a direct consequence of the following multivariate generalization of the trigonometric identity \eqref{coeffcond4}.
\begin{lem}\label{multivariable}
We have
\begin{equation} \label{Pequation}
\begin{split}
 \sum_{I \subseteq \{ 1,\ldots, d\}}Q_I(\mathbf x;t) &=\Bigl(\prod_{1\leq i<j\leq d}\sinh(x_i \pm x_j) \Bigr)\prod_{i=1}^d \sinh(2x_i)\\
 &\times (-1)^d\prod_{i=1}^d\sinh(\xi + t + (\ell - i + 1)\eta).
 \end{split}
 \end{equation}
\end{lem}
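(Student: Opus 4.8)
The plan is to prove the identity in Lemma~\ref{multivariable} by induction on $d$, using the base case $d=1$ (which is precisely \eqref{coeffcond4} after the substitution $z\mapsto -x_1$, $\xi\mapsto \xi$, $x\mapsto t+\tfrac{\ell+1}{?}\eta$-type relabelling; concretely, the $d=1$ sum is $\sinh(\xi-x_1)\sinh(t-x_1+\ell\eta)-\sinh(\xi+x_1)\sinh(t+x_1+\ell\eta)$ up to an overall sign, and \eqref{coeffcond4} rewrites this as $-\sinh(\xi+t+\ell\eta)\sinh(2x_1)$). For the inductive step I would single out the variable $x_d$ and split the sum over $I\subseteq\{1,\ldots,d\}$ according to whether $d\in I$ or $d\notin I$. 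Writing $I=I'\cup\{d\}$ or $I=I'$ with $I'\subseteq\{1,\ldots,d-1\}$, each of the two partial sums factors as a function of $x_d$ times a sum of the form $\sum_{I'\subseteq\{1,\ldots,d-1\}}Q_{I'}(\mathbf x';t')$ for a suitably shifted spectral parameter $t'$, to which the induction hypothesis applies.

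The key computational step is tracking how the prefactors depend on $x_d$: the factor $\sinh(\xi\pm x_d)\sinh(t\pm x_d+\ell\eta)$ and the products $\prod_{i<d}\sinh(x_i^{(I)}\pm x_d^{(I)})$. The point is that when $d\in I$ we get $x_d^{(I)}=x_d-\tfrac{\eta}{2}$, and when $d\notin I$ we get $x_d^{(I)}=x_d+\tfrac{\eta}{2}$; correspondingly the induction hypothesis produces $\prod_{i<d}\sinh(x_i\pm x_d)$ with $t'$ shifted by $\pm\tfrac{\eta}{2}$ and also a sign $(-1)^{d-1}\prod_{i=1}^{d-1}\sinh(\xi+t'+(\ell-i+1)\eta)$. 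After collecting terms the whole expression reduces to showing a one-variable identity in $x_d$ of exactly the shape \eqref{coeffcond4}: namely that
\[
\sinh(\xi-x_d)\sinh(t-x_d+\ell\eta)\,A - \sinh(\xi+x_d)\sinh(t+x_d+\ell\eta)\,B
\]
collapses, where $A$ and $B$ are the $x_d$-independent sine products coming from the induction hypothesis applied with the two shifted parameters. Here one needs the $\eta$-shift in $t'$ to conspire so that $A$ and $B$ differ only by the expected product $\prod_{i<d}\sinh(x_i\pm x_d)$ evaluated with the shift, after which \eqref{coeffcond4} finishes the job and reproduces the claimed right-hand side of \eqref{Pequation} with $d$ variables, including the overall sign $(-1)^d$ and the telescoped product $\prod_{i=1}^d\sinh(\xi+t+(\ell-i+1)\eta)$.

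The main obstacle I anticipate is the bookkeeping of the ``cross'' factors $\prod_{(i,j)\in I\times I^{\mathrm c}}$ and the sine products $\prod_{i<j}\sinh(x_i^{(I)}\pm x_j^{(I)})$ under the split: one must verify that, after isolating $x_d$, the residual sum over $I'\subseteq\{1,\ldots,d-1\}$ is \emph{literally} of the form $\sum_{I'}Q_{I'}(\mathbf x';t')$ with the correctly shifted $t'$ and no leftover $x_d$-dependence in the summand, so that the induction hypothesis is directly applicable. This amounts to a careful but elementary identity among $\sinh(x_i\pm x_d\mp\tfrac{\eta}{2})$-type factors; once that matching is established, the remaining work is the single-variable reduction above, which is just \eqref{coeffcond4}. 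A parallel, and slightly cleaner, route would be to prove \eqref{Pidentity} directly by a partial-fractions / Liouville argument in the variable $\mathrm e^{2t}$: both sides of \eqref{Pequation}, after multiplying by the $t$-prefactor $\prod_{i=1}^d\sinh(\xi-t+(\ell-i+1)\eta)$, are entire and bounded in $\mathrm e^{2t}$, hence determined by matching residues at the obvious poles and the constant term; but the inductive argument seems more transparent and is what I would write up.
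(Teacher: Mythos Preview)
Your inductive plan has a genuine gap at exactly the point you flag as ``the main obstacle.'' After splitting the sum by $d\in I$ versus $d\notin I$, the residual sum over $I'\subseteq\{1,\ldots,d-1\}$ is \emph{not} of the form $\sum_{I'}Q_{I'}(\mathbf{x}';t')$ for any single shifted parameter $t'$. The reason is the cross factor $\prod_{i<d}\sinh(x_i^{(I)}\pm x_d^{(I)})$: with $d\notin I$ one has $x_d^{(I)}=x_d+\tfrac{\eta}{2}$, so this product equals
\[
\prod_{i\in I'}\sinh(x_i+x_d)\sinh(x_i-x_d-\eta)\;\prod_{i\notin I',\,i<d}\sinh(x_i+x_d+\eta)\sinh(x_i-x_d),
\]
which depends on $I'$. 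These extra $x_d$-dependent factors attach \emph{two} new ``parameter''-type factors $\sinh(\cdot+\epsilon_i x_i)$ to each $i<d$, whereas $Q_{I'}$ carries only the two factors coming from $\xi$ and $t+\ell\eta$. So after the split each summand acquires four such factors per $i$, not two, and no shift of $t$ alone can absorb this. (Already for $d=2$: the ``$2\notin I$'' partial sum is $\sinh(\xi-x_2)\sinh(t-x_2+\ell\eta)$ times
\[
\sinh(\xi-x_1)\sinh(t-x_1+\ell\eta)\sinh(x_1+x_2+\eta)\sinh(x_1-x_2)
-\sinh(\xi+x_1)\sinh(t+x_1+\ell\eta)\sinh(x_1+x_2)\sinh(x_1-x_2-\eta),
\]
visibly not a $d=1$ sum $\sum_{I'}Q_{I'}(x_1;t')$.) One could try to rescue the induction by generalising the lemma to allow an arbitrary number of parameter factors, but then the right-hand side is no longer $\delta(\mathbf{x})$ times a function of $t$ only, and the whole structure changes.

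The paper's proof avoids this entirely: it observes the transformation law $Q_I(w\mathbf{x};t)=(-1)^{l(w)}Q_{w^{-1}I}(\mathbf{x};t)$ under the type-$\mathrm{C}_d$ Weyl group $W$ (permutations and sign flips of the $x_i$, with a compatible action on subsets $I$). Summing over $I$ is then the same as anti-symmetrising $Q_\emptyset$ over $W$, so $\sum_I Q_I$ is divisible by the Weyl denominator $\delta(\mathbf{x})=\prod_i\sinh(2x_i)\prod_{i<j}\sinh(x_i\pm x_j)$; a degree count forces the quotient to be independent of $\mathbf{x}$, and a single evaluation at $\mathbf{y}=(-\xi+(d-1)\eta,\ldots,-\xi)$ (where all but the $S_d$-orbit of $Q_\emptyset$ vanish) pins down the constant. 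Your Liouville-in-$\mathrm{e}^{2t}$ idea is closer in spirit to this degree-counting argument, but note that the real content of the lemma lies in the $\mathbf{x}$-dependence, not the $t$-dependence; the $W$-anti-invariance is what makes the $\mathbf{x}$-structure transparent.
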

\begin{proof}
Write $\mathbb{V}(\mathbf{x};t)$ for the left-hand side of \eqref{Pequation}.
It is easy to see that
\[
\mathbb{V}(\mathbf{x};t)\in\mathbb{C}[\mathrm{e}^{\pm 2x_1},\ldots,\mathrm{e}^{\pm 2x_d}],
\]
since each term $Q_I(\mathbf{x};t)$ is a Laurent polynomial in $\mathrm{e}^{2x_1},\ldots,\mathrm{e}^{2x_d}$. 
We now first show that $\mathbb{V}(\mathbf{x};t)$ is anti-invariant with respect to the natural action of the Weyl group $W$ of type C$_d$ on $\mathbb{C}[\mathrm{e}^{\pm 2x_1},\ldots,\mathrm{e}^{\pm 2x_d}]$.

Let $W = \langle s_1, \ldots, s_d \rangle$ be the Weyl group of type C$_d$, with the simple reflections $s_i$ ($i=1,\ldots,d$) acting
on $\C^d$ by permutations and sign flips: for $1\leq i<d$ the simple reflection
$s_i$ acts on $(x_1,\ldots,x_d) \in \C^d$ by permuting $x_i$ and $x_{i+1}$, and
$s_d$ acts by sending $x_d$ to $-x_d$. The Weyl group $W$ also acts on the power set of $\{1,\ldots,d\}$ by
\begin{gather*}
s_i I = \begin{cases} (I \setminus \{ i \} ) \cup \{ i+1 \}, & \text{if } i \in I, \, i+1 \not \in I, \\
(I \setminus \{ i+1 \} ) \cup \{ i \}, & \text{if } i \not \in I, \, i+1 \in I, \\
I, & \text{otherwise}
\end{cases}
\end{gather*}
for $1\leq i<d$, and
\begin{gather*}
s_d I = \begin{cases} I \setminus \{d \}, & \text{if } d \in I, \\ I \cup \{ d \}, & \text{if } d \not \in I. \end{cases}
\end{gather*}
Note that the action of $W$ on the power set of $\{1,\ldots,d\}$ is transitive, and that the stabilizer subgroup of the empty set $\emptyset$
is equal to the symmetric group $S_d:=\langle s_1,\ldots,s_{d-1}\rangle$ in $d$ letters.

By a direct computation we obtain the invariance property
\begin{equation} \label{Qtransformation}
Q_I(w\mathbf x;t) = (-1)^{l(w)}Q_{w^{-1}I}(\mathbf x;t), \qquad w\in W,
\end{equation}
where $l(w)$ is the length of $w\in W$. It follows that
\[
\mathbb{V}(\mathbf{x};t)=\frac{1}{d!}\sum_{w\in W}(-1)^{l(w)}Q_\emptyset(w^{-1}\mathbf{x};t),
\]
in particular $\mathbb{V}(\mathbf{x};t)\in\mathbb{C}[\mathrm{e}^{\pm 2x_1},\ldots,\mathrm{e}^{\pm 2x_d}]$ is $W$-anti-invariant. 
Thus
\begin{equation}\label{stepWeyl0}
\mathbb{V}(\mathbf{x};t)=Z(\mathbf{x};t) \delta(\mathbf{x})
\end{equation}
with the Weyl denominator
\[
\delta(\mathbf{x}):=\Bigl(\prod_{1\leq i<j\leq d}\sinh(x_i \pm x_j) \Bigr)\prod_{i=1}^d \sinh(2x_i)
\]
and with $Z(\mathbf{x};t)\in\mathbb{C}[\mathrm{e}^{\pm 2x_1},\ldots,\mathrm{e}^{2x_d}]$ $W$-invariant. A standard argument comparing degrees on both sides of
\eqref{stepWeyl0}
shows that $Z(\mathbf{x};t)$ is independent of $\mathbf{x}$. So
\begin{equation}\label{stepWeyl}
\mathbb{V}(\mathbf{x};t)=Z(t)\delta(\mathbf{x})
\end{equation}
for some constant $Z(t)$. We compute $Z(t)$ by evaluating both sides of \eqref{stepWeyl} in
\[
\mathbf{y}:=(-\xi+(d-1)\eta,-\xi+(d-2)\eta,\ldots,-\xi).
\]
By the explicit expression
\[
Q_\emptyset(\mathbf{x};t)=\! \Bigl(\prod_{i=1}^d\sinh(\xi-x_i)\sinh(t-x_i+\ell\eta)\Bigr) \hspace{-2mm}
\prod_{1\leq i<j\leq d} \hspace{-2mm} \sinh(x_i-x_j)\sinh(x_i+x_j+\eta)
\]
it follows that $Q_\emptyset(w^{-1}\mathbf{y};t)=0$ for $w\in W$ unless $w\in S_d$. Hence
\[
\mathbb{V}(\mathbf{y};t)=\frac{1}{d!}\sum_{w\in S_d}(-1)^{l(w)}Q_\emptyset(w^{-1}\mathbf{y};t)=
\frac{1}{d!}\sum_{w\in S_d}Q_{w\emptyset}(\mathbf{y};t)=Q_\emptyset(\mathbf{y};t),
\]
and consequently
\[
Z(t)=\frac{Q_\emptyset(\mathbf{y};t)}{\delta(\mathbf{y})}=
(-1)^d\prod_{i=1}^d\sinh(\xi + t + (\ell - i + 1)\eta),
\]
where the last equality follows from a straightforward computation.
\end{proof}


\section{Fusion for the boundary qKZ equations and their solutions.} \label{sec:fusion2}

In this section we will show that, for $\underline{\ell} \in \frac{1}{2} \Z_{\geq 0}^N$, the solutions $f_S^{\underline{\ell}}(\mathbf{t})$ exhibited in Theorem \ref{mr} can be directly obtained using a fusion process from the spin-half solution $\bigl(\textup{pr}^{\frac{1}{2}}\bigr)^{\otimes N}\bigl(f_S^{(\frac{1}{2},\ldots,\frac{1}{2})}(\mathbf{t})\bigr)$
constructed before in \cite{RSV}.
Moreover, as we will see, arbitrary solutions of the boundary qKZ equations \eqref{ReflqKZ} in $M^{(\ell_1,\ldots,\ell_{s-1})} \otimes V^k \otimes V^\frac{1}{2} \otimes M^{(\ell_{s+1},\ldots,\ell_N)}$ can be naturally fused to obtain solutions in $M^{(\ell_1,\ldots,\ell_{s-1})} \otimes V^{k+\frac{1}{2}} \otimes M^{(\ell_{s+1},\ldots,\ell_N)}$.

\subsection{Notations}
In this section, we will slightly abuse notation when considering operators acting on a ``mixed'' $N$-fold tensor product made up of finite- and infinite-dimensional modules $V^k$ ($k \in \frac{1}{2} \Z_{\geq 0}$) and $M^\ell$ ($\ell \in \C$).
For example, if $\ell_s \in \frac{1}{2}\Z_{\geq 0}$, there is a unique linear operator $\tilde \Xi^{\underline{\ell}}_r(\mathbf t;\xi_+,\xi_-;\tau)$ on $M^{(\ell_1,\ldots,\ell_{s-1})} \otimes V^{\ell_s} \otimes M^{(\ell_{s+1},\ldots,\ell_N)}$ determined by
\begin{align*}
& \tilde \Xi^{\underline{\ell}}_r(\mathbf t;\xi_+,\xi_-;\tau) \Bigl( \textup{Id}_{M^{(\ell_1,\ldots,\ell_{s-1})}} \otimes \textup{pr}^{\ell_s} \otimes \textup{Id}_{M^{(\ell_{s+1},\ldots,\ell_N)}}  \Bigr) = \\
&\qquad = \Bigl( \textup{Id}_{M^{(\ell_1,\ldots,\ell_{s-1})}} \otimes \textup{pr}^{\ell_s} \otimes \textup{Id}_{M^{(\ell_{s+1},\ldots,\ell_N)}}  \Bigr) \Xi^{\underline{\ell}}_r(\mathbf t;\xi_+,\xi_-;\tau);
\end{align*}
we will denote the resulting operator $\tilde\Xi^{\underline{\ell}}_r(\mathbf t;\xi_+,\xi_-;\tau)$ on $M^{(\ell_1,\ldots,\ell_{s-1})} \otimes V^{\ell_s} \otimes M^{(\ell_{s+1},\ldots,\ell_N)}$ simply by $\Xi^{\underline{\ell}}_r(\mathbf t;\xi_+,\xi_-;\tau)$ as long as it is clear from context which tensor component we have projected onto its finite-dimensional quotient.

We will use this mild abuse of notation also when discussing the operators $T^{\underline \ell}(x;\mathbf t)$, $\mathcal{U}^{\xi,\underline \ell}(x;\mathbf{t})$, 
$\mathcal{B}^{\xi,\underline \ell}(x;\mathbf{t})$, $\overline{\mathcal{B}}^{\xi,\underline \ell}(x;\mathbf{t})$ and $\overline{\mathcal{B}}^{\xi,(S),\underline \ell}(\mathbf{x};\mathbf{t})$.
Similarly, we will use the notations $\Omega^{\underline \ell}$ and $f^{\underline \ell}_S(\mathbf t)$ for those elements of $M^{(\ell_1,\ldots,\ell_{s-1})} \otimes V^{\ell_s} \otimes M^{(\ell_{s+1},\ldots,\ell_N)}$ that are actually given by $\textup{pr}^{\ell_s} \Omega^{\underline \ell}$ and $\textup{pr}^{\ell_s} f^{\underline \ell}_S(\mathbf t)$, respectively.

To fuse the boundary qKZ transport operators $\Xi_r^{\underline{\ell}}(\mathbf t) := \Xi_r^{\underline{\ell}}(\mathbf{t};\xi_+,\xi_-;\tau)$, it is convenient to use the injection
$j^k=P^{\frac{1}{2}k}\iota^k: V^{k+\frac{1}{2}}\hookrightarrow V^{k}\otimes V^{\frac{1}{2}}$ instead of $\iota^k$.  Let $k\in\frac{1}{2}\mathbb{Z}_{\geq 0}$ and $\ell\in\mathbb{C}$.
The following ``local'' fusion relations in terms of $j^k$ follow straightforwardly from Proposition \ref{FusionL} and \eqref{fusionk} respectively,
\begin{gather}
(j^k\otimes \textup{Id}_{M^\ell}) L^{k+\frac{1}{2},\ell}(x-y)=
L_{23}^{\frac{1}{2}\ell}(x-k\eta-y)L_{13}^{k\ell}(x+\frac{\eta}{2}-y)(j^k\otimes\textup{Id}_{M^\ell}), \label{Lfusionj} \\
j^k K^{k+\frac{1}{2}}(x) = K^{\frac{1}{2}}_2(x-k\eta)R^{k\frac{1}{2}}(2x-(k-\frac{1}{2})\eta) K_1^k(x+\frac{\eta}{2}) j^k. \label{Kfusionj}
\end{gather}
Furthermore, in a similar way as we derived Proposition \ref{FusionL} and \eqref{Lfusionj},
\begin{equation}
(j^k\otimes \textup{Id}_{M^\ell}) L^{k+\frac{1}{2},\ell}(x-y)=
L_{13}^{k\ell}(x-\frac{\eta}{2}-y)L_{23}^{\frac{1}{2}\ell}(x+k \eta-y)(j^k\otimes\textup{Id}_{M^\ell}). \label{Linvfusionj}
\end{equation}

Given $s=1,\ldots,N$ and $k\in\frac{1}{2}\mathbb{Z}_{\geq 0}$, denote
\begin{align*}
j^k_s &:= \textup{Id}_{M^{(\ell_1,\ldots,\ell_{s-1})}} \otimes j^k \otimes  \textup{Id}_{M^{(\ell_{s+1},\ldots,\ell_N)}},
\end{align*}
an injective map from $M^{(\ell_1,\ldots,\ell_{s-1})} \otimes V^{k+\frac{1}{2}} \otimes M^{(\ell_{s+1},\ldots,\ell_N)}$ to $M^{(\ell_1,\ldots,\ell_{s-1})} \otimes V^k \otimes V^\frac{1}{2} \otimes M^{(\ell_{s+1},\ldots,\ell_N)}$.

For the rest of this section, given $1 \leq s \leq N$ and $\underline \ell \in \C^N$ such that $\ell_s = k+\frac{1}{2}$ for $k \in \frac{1}{2} \Z_{\geq 0}$, we write
\begin{equation}\label{expanded}
\begin{split}
\underline{\ell}'&= (\ell_1,\ldots,\ell_{s-1},k,\frac{1}{2},\ell_{s+1},\ldots,\ell_N) \in \C^{N+1}, \\
\mathbf t' &= (t_1,\ldots,t_{s-1},t_s+\frac{\eta}{2},t_s-k \eta,t_{s+1},\ldots,t_N),
\end{split}
\end{equation}
while $\mathbf t=(t_1,\ldots,t_{s-1},t_s,t_{s+1},\ldots,t_N)$ and $\underline{\ell}=(\ell_1,\dots, \ell_N)$ with $\ell_s=k+\frac{1}{2}$.

\subsection{Fusion of transport operators}

\begin{prop}\label{transportoperatorfusion}
Let $1 \leq s \leq N$ and $\underline \ell  \in \C^N$ such that $\ell_s = k+\frac{1}{2}$ for $k \in \frac{1}{2} \Z_{\geq 0}$.
For $1 \leq r \leq N$ we have
\begin{equation} \label{Xifusion}
j^k_s \Xi_r^{\underline{\ell}}(\mathbf t) = \begin{cases} \Xi_r^{\underline{\ell}'}(\mathbf t') j^k_s, & r<s, \\
\Xi_{s+1}^{\underline{\ell}'}(\mathbf t' + \mathbf e_s \tau) \Xi_s^{\underline{\ell}'}(\mathbf t') j^k_s, & r=s, \\
\Xi_{r+1}^{\underline{\ell}'}(\mathbf t') j^k_s, & r>s, \end{cases}
\end{equation}
as linear operators $ M^{(\ell_1,\ldots,\ell_{s-1})} \otimes V^{\ell_s} \otimes M^{(\ell_{s+1},\ldots,\ell_N)} \to M^{(\ell_1,\ldots,\ell_{s-1})} \otimes V^{k} \otimes V^{\frac{1}{2}} \otimes M^{(\ell_{s+1},\ldots,\ell_N)}$.
\end{prop}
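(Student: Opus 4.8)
The plan is to substitute, into the explicit product \eqref{Atauj} defining $\Xi_r^{\underline{\ell}}(\mathbf t)$, the ``local'' fusion relations for its elementary building blocks and then read off the three asserted identities. Recall that $\Xi_r^{\underline{\ell}}(\mathbf t)$ is an ordered product of operators of the form $\mathcal R^{\ell_a\ell_b}_{a,b}(\cdot)$ and $\mathcal K^{\pm,\ell_r}_r(\cdot)$. Composing with $j^k_s$ on the left, I would first note that every factor not acting on the $s$-th tensor leg commutes past $j^k_s$ unchanged and, under the relabelling $a\mapsto a$ for $a<s$ and $a\mapsto a+1$ for $a>s$, becomes the corresponding factor in the transport operator written for $\underline{\ell}'$. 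Hence only the factors involving leg $s$ need attention.

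For $r\neq s$ there are exactly two such factors: the two $\mathcal R$-operators connecting legs $r$ and $s$ (with arguments of the form $t_r+t_s$ and $t_r-t_s$, the latter shifted by $\tau$ when $s>r$), each carrying the spin $\ell_s=k+\tfrac12$ on leg $s$. I would put these into $L$-operator form via $P$-symmetry (Lemma \ref{PsymmLem}) and apply the local fusion relation \eqref{Lfusionj}, or its variant \eqref{Linvfusionj}, to each; which of the two is used is dictated by whether the resulting adjacent pair $\mathcal R_{\cdot,s}\mathcal R_{\cdot,s+1}$ appears ordered $(V^k,V^{\frac12})$ or $(V^{\frac12},V^k)$ inside $\Xi^{\underline{\ell}'}_r$ (resp. $\Xi^{\underline{\ell}'}_{r+1}$). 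The arguments coming out of \eqref{Lfusionj}/\eqref{Linvfusionj} are precisely those prescribed by $\mathbf t'$ in \eqref{expanded}, and the two pieces land consecutively in the slots they occupy there. So everything telescopes and one gets $j^k_s\Xi_r^{\underline{\ell}}(\mathbf t)=\Xi_r^{\underline{\ell}'}(\mathbf t')j^k_s$ for $r<s$ and $j^k_s\Xi_r^{\underline{\ell}}(\mathbf t)=\Xi_{r+1}^{\underline{\ell}'}(\mathbf t')j^k_s$ for $r>s$, with no reorganisation needed.

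The case $r=s$ is the crux. Now every $\mathcal R$-factor of $\Xi_s^{\underline{\ell}}(\mathbf t)$ connecting leg $s$ to another leg must be fused as above, and in addition both $\mathcal K^{+,k+\frac12}_s(t_s+\tfrac\tau2)$ and $\mathcal K^{-,k+\frac12}_s(t_s)$ must be fused using \eqref{Kfusionj}; each $\mathcal K$-fusion produces, besides $K^k_s$ and $K^{\frac12}_{s+1}$ at the right parameters, one extra factor $R^{k\frac12}_{s,s+1}$. After these substitutions $j^k_s\Xi_s^{\underline{\ell}}(\mathbf t)$ is a long product on the $(N+1)$-fold tensor product in which the two new legs $s$ and $s+1$ are heavily interleaved; on the other hand I would expand $\Xi_{s+1}^{\underline{\ell}'}(\mathbf t'+\mathbf e_s\tau)\,\Xi_s^{\underline{\ell}'}(\mathbf t')$ via \eqref{Atauj}, where all leg-$s$ operators stand to one side of all leg-$(s+1)$ operators. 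The remaining task is to disentangle the first product into the second by repeated use of the Yang--Baxter equation \eqref{qYBkl}, the reflection equation \eqref{rree} and unitarity of the $R$-operators: one pushes the two $R^{k\frac12}_{s,s+1}$ factors into place and moves each pair of $\mathcal R$-operators connecting an external leg to the block $\{s,s+1\}$ past one another, so that the product factorises as $\Xi_{s+1}^{\underline{\ell}'}\cdot\Xi_s^{\underline{\ell}'}$. This is closely parallel to the computation in the proof of Proposition \ref{fusionkprop}, where fusion of $R$- and $K$-operators together with \eqref{qYBkl} and the reflection equation was used in just this way; conceptually \eqref{Xifusion} is the statement that fusion intertwines the single-step qKZ transports.

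The step I expect to be the main obstacle is precisely this reorganisation when $r=s$. It is lengthy, and the delicate point is to keep exact track of all the argument shifts --- the offsets $t_s+\tfrac\eta2$ and $t_s-k\eta$ in $\mathbf t'$ coming from $j^k$, the arguments $2x-(k-\tfrac12)\eta$ of the fused $R^{k\frac12}_{s,s+1}$, and the $\tau$-shift in $\mathbf t'+\mathbf e_s\tau$ --- so as to verify that each application of \eqref{qYBkl}, \eqref{rree} and unitarity lands on matching arguments and that no residual factor is left over.
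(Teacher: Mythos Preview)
Your approach is correct and close in spirit to the paper's. For $r\neq s$ the two are identical: both simply push $j^k_s$ through the product \eqref{Atauj} using the local fusion identities \eqref{Lfusionj}--\eqref{Linvfusionj}.

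For $r=s$ the paper runs the argument in the opposite direction, and this makes the bookkeeping cleaner. Rather than fusing $j^k_s\Xi_s^{\underline{\ell}}(\mathbf t)$ and then reorganising the result into $\Xi_{s+1}^{\underline{\ell}'}(\mathbf t'+\tau\mathbf e_s)\,\Xi_s^{\underline{\ell}'}(\mathbf t')$, the paper starts from the latter product, simplifies it using unitarity of the $R$-operator (the factor $R_{s+1,s}(t'_{s+1}-t'_s-\tau)$ at the end of $\Xi_{s+1}$ cancels $R_{s,s+1}(t'_s-t'_{s+1}+\tau)$ at the beginning of $\Xi_s$) together with the RLL relations \eqref{RLL} to interleave the remaining factors, and then recognises the resulting expression as precisely what one obtains by applying \eqref{Lfusionj}--\eqref{Kfusionj} to $j^k_s\Xi_s^{\underline{\ell}}(\mathbf t)$. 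One small correction: the reflection equation \eqref{rree} is \emph{not} needed in this reorganisation. The $K$-operators on legs $s$ and $s+1$ act on disjoint tensor components and hence commute with each other and with $R$-operators not touching their leg; only unitarity and RLL suffice. Your analogy with Proposition~\ref{fusionkprop} is therefore a slight overreach---that proof genuinely requires the reflection equation because it is \emph{proving} a reflection equation, whereas here the $K$'s never have to be moved past an $R$ sharing their leg.
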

\begin{proof}
For the cases where $r \ne s$, simply by judiciously applying (\ref{Lfusionj}-\ref{Linvfusionj})
to the right-hand side of \eqref{Xifusion} (see \eqref{Atauj} for the definition of the transport operators).
For $r=s$, the product of factors in $\Xi_{s+1}^{\underline{\ell}'}(\mathbf t' + \mathbf e_s \tau) \Xi_s^{\underline{\ell}'}(\mathbf t')$ can first be simplified using unitarity of the $R$-operator and the RLL-relations \eqref{RLL}, yielding
\begin{align*}
& \Xi_{s+1}^{\underline{\ell}'}(\mathbf t' + \mathbf e_s \tau) \Xi_s^{\underline{\ell}'}(\mathbf t') = \Biggl( \prod_{j=s+1}^N L^{\frac{1}{2} \ell_j}(t_s - t_j + \tau - k \eta) L^{k \ell_j}(t_s - t_j + \tau + \frac{\eta}{2}) \Biggr) \\
& \qquad\qquad \times K^{\xi_+,\frac{1}{2}}(t_s + \frac{\tau}{2} - k \eta) R^{k \frac{1}{2}}(2(t_s + \frac{\tau}{2}) - (k-\frac{1}{2})\eta) K^{\xi_+,k}(t_s + \frac{\tau}{2} + \frac{\eta}{2}) \\
& \qquad\qquad \times \Biggl( \prod_{j=N \atop j \ne s}^1 L^{\frac{1}{2}\ell_j}(t_j + t_s - k \eta) L^{k\ell_j}(t_j + t_s + \frac{\eta}{2}) \Biggr)  \\
& \qquad\qquad \times K^{\xi_-,\frac{1}{2}}(t_s - k \eta) R^{\frac{1}{2} k}(2t_s - (k-\frac{1}{2})\eta) K^{\xi_-,k}(t_s + \frac{\eta}{2}) \\
& \qquad\qquad \times \Biggl( \prod_{j=1}^{s-1} L^{\frac{1}{2} \ell_j}(t_s - t_j - k \eta) L^{k \ell_j}(t_s - t_j + \frac{\eta}{2}) \Biggr),
\end{align*}
where the ordering of the products over $j$ is as prescribed.
Now applying (\ref{Lfusionj}-\ref{Kfusionj}) yields \eqref{Xifusion} for the case $r=s$.
\end{proof}
\subsection{Fusion of solutions}\label{fusionsubsection}

\begin{prop}
Let $1 \leq s \leq N$ and $\underline \ell  \in \C^N$ such that $\ell_s = k+\frac{1}{2}$ for $k \in \frac{1}{2} \Z_{\geq 0}$.
Suppose that $f:\mathbb{C}^{N+1}\rightarrow M^{(\ell_1,\ldots,\ell_{s-1})} \otimes V^k \otimes V^\frac{1}{2} \otimes M^{(\ell_{s+1},\ldots,\ell_N)}$
is a meromorphic solution of the boundary qKZ equations,
\begin{equation}\label{initialequation}
\Xi_r^{\underline{\ell}'}(\mathbf{z})f(\mathbf{z})=f(\mathbf{z}+\tau\mathbf{e}_r),\qquad 1\leq r\leq N+1,
\end{equation}
where $\underline{\ell}'$ is given by \eqref{expanded}. Suppose that $f$ restricts to a meromorphic function on the hyperplane
\[
H:=\{\mathbf{z}\in\mathbb{C}^{N+1} \,\, | \,\, z_s-z_{s+1}=(k+\frac{1}{2})\eta\, \}.
\]
Then there exists a unique meromorphic function
\[\textup{Fus}^{\underline{\ell}}_s(f): \mathbb{C}^N\rightarrow
M^{(\ell_1,\ldots,\ell_{s-1})} \otimes V^{k +\frac{1}{2}} \otimes M^{(\ell_{s+1},\ldots,\ell_N)}
\]
satisfying
\begin{equation} \label{solfusion}
j^k_s \textup{Fus}^{\underline{\ell}}_s(f)(\mathbf t) = f(\mathbf t'),
\end{equation}
with $\mathbf t'$ given by \eqref{expanded}. Furthermore, $\textup{Fus}^{\underline{\ell}}_s(f)$ is a meromorphic
solution of the boundary qKZ equations \eqref{ReflqKZ} with values in $M^{(\ell_1,\ldots,\ell_{s-1})} \otimes V^{k+\frac{1}{2}} \otimes M^{(\ell_{s+1},\ldots,\ell_N)}$,
\begin{equation}\label{fusqKZ}
\Xi_r^{\underline{\ell}}(\mathbf{t})\textup{Fus}_s^k(f)(\mathbf{t})=\textup{Fus}_s^k(f)(\mathbf{t}+\tau\mathbf{e}_r),\qquad 1\leq r\leq N.
\end{equation}
\end{prop}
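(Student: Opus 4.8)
The plan is to construct $\textup{Fus}^{\underline{\ell}}_s(f)$ directly from the defining relation \eqref{solfusion} and then verify the boundary qKZ equations \eqref{fusqKZ} using Proposition \ref{transportoperatorfusion}. First I would address existence and uniqueness of $\textup{Fus}^{\underline{\ell}}_s(f)$. Since $j^k$ is injective (Proposition \ref{intertwiners}), the map $j^k_s$ is injective, so $\textup{Fus}^{\underline{\ell}}_s(f)(\mathbf t)$ is uniquely determined as an element of the tensor product provided $f(\mathbf t')$ lies in the image of $j^k_s$. Note that $\mathbf t'$ lies on the hyperplane $H$, since $(t_s + \tfrac{\eta}{2}) - (t_s - k\eta) = (k+\tfrac12)\eta$; by hypothesis $f$ restricts meromorphically to $H$, so $\mathbf t \mapsto f(\mathbf t')$ is a well-defined meromorphic function $\mathbb{C}^N \to M^{(\ell_1,\ldots,\ell_{s-1})} \otimes V^k \otimes V^{\frac12} \otimes M^{(\ell_{s+1},\ldots,\ell_N)}$. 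The key point is then that the image of this map is contained in the image of $j^k_s$; this should follow from the fact that $f$ solves the boundary qKZ equations on $H$, combined with the intertwining/fusion structure. Concretely, by Lemma \ref{link} the image of $j^k$ equals the image of $S^k = P^{\frac12 k}R^{\frac12 k}((k+\tfrac12)\eta)$, and the fusion relations for $L$- and $K$-operators (together with the fact that $R^{\frac12 k}(x)$ specialized at $x=(k+\tfrac12)\eta$ projects onto this subspace) show that the transport operator $\Xi^{\underline{\ell}'}_s(\mathbf z)$ restricted to $H$ preserves $\textup{im}(j^k_s)$; since $f(\mathbf z+\tau\mathbf e_s) = \Xi^{\underline{\ell}'}_s(\mathbf z)f(\mathbf z)$, an inductive/connectedness argument in the lattice directions transverse to $H$ propagates membership in $\textup{im}(j^k_s)$ from a single point, establishing that $f(\mathbf t') \in \textup{im}(j^k_s)$ for all $\mathbf t$.

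Once $\textup{Fus}^{\underline{\ell}}_s(f)$ is defined, I would verify \eqref{fusqKZ} by a short computation using Proposition \ref{transportoperatorfusion}. Fix $r\in\{1,\ldots,N\}$. Applying $j^k_s$ to the left-hand side of \eqref{fusqKZ} and using \eqref{Xifusion} together with \eqref{solfusion}, we get
\[
j^k_s\,\Xi_r^{\underline{\ell}}(\mathbf t)\,\textup{Fus}^{\underline{\ell}}_s(f)(\mathbf t) = \begin{cases} \Xi_r^{\underline{\ell}'}(\mathbf t')\,f(\mathbf t'), & r<s,\\ \Xi_{s+1}^{\underline{\ell}'}(\mathbf t'+\mathbf e_s\tau)\,\Xi_s^{\underline{\ell}'}(\mathbf t')\,f(\mathbf t'), & r=s,\\ \Xi_{r+1}^{\underline{\ell}'}(\mathbf t')\,f(\mathbf t'), & r>s. \end{cases}
\]
For $r<s$, the initial equation \eqref{initialequation} gives $\Xi_r^{\underline{\ell}'}(\mathbf t')f(\mathbf t') = f(\mathbf t'+\tau\mathbf e_r)$, and since $(\mathbf t+\tau\mathbf e_r)' = \mathbf t'+\tau\mathbf e_r$ for $r<s$, this equals $f((\mathbf t+\tau\mathbf e_r)') = j^k_s\,\textup{Fus}^{\underline{\ell}}_s(f)(\mathbf t+\tau\mathbf e_r)$. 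The case $r>s$ is analogous, using that $(\mathbf t+\tau\mathbf e_r)' = \mathbf t'+\tau\mathbf e_{r+1}$. For $r=s$, applying \eqref{initialequation} twice (first with index $s$, then with index $s+1$) yields $\Xi_{s+1}^{\underline{\ell}'}(\mathbf t'+\mathbf e_s\tau)\Xi_s^{\underline{\ell}'}(\mathbf t')f(\mathbf t') = f(\mathbf t'+\tau\mathbf e_s+\tau\mathbf e_{s+1})$, and one checks directly that $(\mathbf t+\tau\mathbf e_s)' = \mathbf t'+\tau(\mathbf e_s+\mathbf e_{s+1})$ from the definition \eqref{expanded} of $\mathbf t'$. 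In all cases we obtain $j^k_s\,\Xi_r^{\underline{\ell}}(\mathbf t)\,\textup{Fus}^{\underline{\ell}}_s(f)(\mathbf t) = j^k_s\,\textup{Fus}^{\underline{\ell}}_s(f)(\mathbf t+\tau\mathbf e_r)$, and injectivity of $j^k_s$ gives \eqref{fusqKZ}.

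I expect the main obstacle to be the first part: rigorously showing that $f(\mathbf t')$ takes values in $\textup{im}(j^k_s)$ for all $\mathbf t$, i.e. that the fused function is genuinely well-defined in the smaller module $M^{(\ell_1,\ldots,\ell_{s-1})} \otimes V^{k+\frac12} \otimes M^{(\ell_{s+1},\ldots,\ell_N)}$. The cleanest route is probably to observe that $\textup{im}(j^k_s)$ is precisely the intersection of $H$ with the image of the relevant $R$-operator at its pole/reducibility value, and that the transport operators $\Xi_r^{\underline{\ell}'}$ restricted to $H$ preserve this subspace as a consequence of the fusion formulas \eqref{Lfusionj}, \eqref{Kfusionj}, \eqref{Linvfusionj} (which is essentially what Proposition \ref{transportoperatorfusion} encodes); then since the lattice $\mathbf t_0 + \tau\mathbb{Z}^N$ of base points for the qKZ system is reached from any single point by repeated application of the $\Xi_r^{\underline{\ell}'}$ and their inverses, and since $\Xi_r^{\underline{\ell}'}$ is invertible, membership in $\textup{im}(j^k_s)$ propagates. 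A subtlety to handle carefully is meromorphy: one must ensure that the poles introduced by evaluating at $\mathbf t'$ and by the fusion operators do not cause $\textup{Fus}^{\underline{\ell}}_s(f)$ to fail to be meromorphic — but since $j^k_s$ is a fixed injective linear map (independent of $\mathbf t$), $\textup{Fus}^{\underline{\ell}}_s(f)(\mathbf t)$ is simply the preimage under $j^k_s$ of the meromorphic function $\mathbf t\mapsto f(\mathbf t')$, hence meromorphic, and uniqueness is immediate from injectivity.
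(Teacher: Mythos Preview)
Your verification of the boundary qKZ equations \eqref{fusqKZ} is correct and matches the paper's proof essentially verbatim: apply $j_s^k$, invoke Proposition \ref{transportoperatorfusion}, use \eqref{initialequation} once (or twice, for $r=s$), and conclude by injectivity of $j_s^k$.

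The gap is in your existence argument for $\textup{Fus}_s^{\underline{\ell}}(f)$. Your proposed inductive/propagation scheme does not work as stated. First, it has no base case: you never exhibit a single point at which $f(\mathbf t')$ is known to lie in $\textup{Im}(j_s^k)$. Second, the shift $\mathbf z\mapsto\mathbf z+\tau\mathbf e_s$ does \emph{not} preserve the hyperplane $H$, so the relation $f(\mathbf z+\tau\mathbf e_s)=\Xi_s^{\underline{\ell}'}(\mathbf z)f(\mathbf z)$ cannot be iterated along $H$; and even if it could, it would only yield the conclusion on a lattice of points, not for all $\mathbf t$. Third, saying the transport operators ``preserve'' $\textup{Im}(j_s^k)$ is the wrong formulation: preservation of a subspace tells you nothing about vectors not already known to lie in it.

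The paper's argument is direct and avoids all of this. Write \eqref{initialequation} with $r=s$ in the form $f(\mathbf z)=\Xi_s^{\underline{\ell}'}(\mathbf z-\tau\mathbf e_s)f(\mathbf z-\tau\mathbf e_s)$. For $\mathbf z\in H$ the \emph{leftmost} factor of $\Xi_s^{\underline{\ell}'}(\mathbf z-\tau\mathbf e_s)$ is $R_{s,s+1}^{k\frac{1}{2}}\bigl((z_s-\tau)-z_{s+1}+\tau\bigr)=R^{k\frac{1}{2}}\bigl((k+\tfrac12)\eta\bigr)$, so on $H$ the operator factors as $R^{k\frac{1}{2}}\bigl((k+\tfrac12)\eta\bigr)Z(\cdot)$ with $Z$ meromorphic. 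Hence $f|_H$ takes values in $\textup{Im}\bigl(R^{k\frac{1}{2}}((k+\tfrac12)\eta)\bigr)$, and by Lemma \ref{link} this image equals $\textup{Im}(j_s^k)$. No induction, no base point, and the conclusion holds on all of $H$ at once. Note this uses the shifted argument $\mathbf z-\tau\mathbf e_s$ (which is \emph{off} $H$) inside the transport operator; evaluating $\Xi_s^{\underline{\ell}'}$ at $\mathbf z\in H$ directly, as you do, gives the wrong $R$-factor $R^{k\frac{1}{2}}((k+\tfrac12)\eta+\tau)$.
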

\begin{proof}
It follows from \eqref{initialequation} with $r=s$ that $f(\mathbf{z})=\Xi_s^{\underline{\ell}'}(\mathbf{z}-\tau\mathbf{e}_s)f(\mathbf{z}-\tau\mathbf{e}_s)$.
By assumption the left-hand side restricts to a meromorphic vector valued function on $H$. By the explicit expressions \eqref{Atauj} for the transport operators,
the operator $\Xi_s^{\underline{\ell}'}(\mathbf{z}-\tau\mathbf{e}_s)$
restricts to a meromorphic operator valued function on $H$, and
\[ \Xi_s^{\underline{\ell}'}(\cdot\,-\tau\mathbf{e}_s)|_H=R^{k\frac{1}{2}}((k+\frac{1}{2})\eta)Z(\cdot)
\]
for some meromorphic operator valued function $Z$ on $H$. Hence $f|_H$ takes its values in the subspace
$\textup{Im}(R^{k\frac{1}{2}}((k+\frac{1}{2})\eta))$ of $M^{(\ell_1,\ldots,\ell_{s-1})}\otimes V^k\otimes V^{\frac{1}{2}}\otimes M^{(\ell_{s+1},\ldots,\ell_N)}$.
By Lemma \ref{link} we have $\textup{Im}(R^{k\frac{1}{2}}((k+\frac{1}{2})\eta))\subseteq\textup{Im}(j_s^k)$. Since $j_s^k$ is injective, we conclude that there exists
a unique meromorphic function
\[\textup{Fus}_s^{\underline{\ell}}(f): \mathbb{C}^N\rightarrow M^{(\ell_1,\ldots,\ell_{s-1})} \otimes V^{k +\frac{1}{2}} \otimes M^{(\ell_{s+1},\ldots,\ell_N)}
\]
satisfying \eqref{solfusion}.

It remains to show that $\textup{Fus}_s^{\underline{\ell}}(f)$ satisfies the boundary qKZ equations \eqref{fusqKZ}.
Since $j^k$ is an injection, it suffices to prove that, for $r=1,\ldots,N$,
\begin{equation}\label{TODOfusionsol}
j^k_s \Xi^{\underline{\ell}}_r(\mathbf t)  \textup{Fus}^{\underline{\ell}}_s(f)(\mathbf t) =  j^k_s \textup{Fus}^{\underline{\ell}}_s(f)(\mathbf t + \tau\mathbf e_r).
\end{equation}
For $r<s$ we have
\[
j^k_s \Xi^{\underline{\ell}}_r(\mathbf t) \textup{Fus}^{\underline{\ell}}_s(f)(\mathbf t) = \Xi_r^{\underline{\ell}'}(\mathbf t') f(\mathbf t') =  f(\mathbf t' + \tau\mathbf e_r) = j^k_s \textup{Fus}^{\underline{\ell}}_s(f)(\mathbf t + \tau\mathbf e_r) ,
\]
owing to \eqref{Xifusion}, \eqref{solfusion}, the boundary qKZ equations \eqref{initialequation} and \eqref{solfusion} again.
The case $r>s$ of \eqref{TODOfusionsol} is proven similarly.
Finally, for $r=s$ we have
\begin{equation*}
\begin{split}
j^k_s \Xi^{\underline{\ell}}_s(\mathbf t) \textup{Fus}^{\underline{\ell}}_s(f)(\mathbf t)
&= \Xi_{s+1}^{\underline{\ell}'}(\mathbf t' + \tau\mathbf e_s) \Xi_s^{\underline{\ell}'}(\mathbf t') f(\mathbf t')\\
&= \Xi_{s+1}^{\underline{\ell}'}(\mathbf t' + \tau\mathbf e_s)  f(\mathbf t' + \tau\mathbf e_s) \\
&= f(\mathbf t' + \tau\mathbf e_s  + \tau\mathbf e_{s+1}) =
j^k_s \textup{Fus}^{\underline{\ell}}_s(f)(\mathbf t + \tau\mathbf e_s),
\end{split}
\end{equation*}
where we have applied \eqref{Xifusion}, \eqref{solfusion}, \eqref{initialequation} twice, and finally \eqref{solfusion} again.
\end{proof}

\subsection{Fusion of the Jackson integral solutions}
The special Jackson integral solutions of the boundary qKZ equations (see Theorem \ref{mr})
are compatible with fusion in the following sense.
\begin{prop} \label{hypergeomsolfusion}
Let $1 \leq s \leq N$ and $\underline \ell\in\mathbb{C}^N$ such that $\ell_s = k+\frac{1}{2}$ with $k \in \frac{1}{2} \Z_{\geq 0}$.
Let $\ell^\prime\in\mathbb{C}^{N+1}$ be given by \eqref{expanded}. Let
\[
f_S^{\underline \ell}: \mathbb{C}^N\rightarrow M^{(\ell_1,\ldots,\ell_{s-1})}\otimes V^{k+\frac{1}{2}}\otimes
M^{(\ell_{s+1},\ldots,\ell_N)}
\]
and
\[
f_S^{\underline{\ell}'}: \mathbb{C}^{N+1}\rightarrow
M^{(\ell_1,\ldots,\ell_{s-1})}\otimes V^k\otimes V^{\frac{1}{2}}\otimes M^{(\ell_s,\ldots,\ell_N)}
\]
be the Jackson integral solutions of the boundary qKZ equations
as given in Theorem \ref{mr}, with $f_S^{\underline{\ell}}$ and $f_S^{\underline{\ell}'}$ having the same base point $\mathbf{x}_0\in\mathbb{C}^S$, the same weight factors $g_{\xi_+,\xi_-}$, $h$ and $F^{\ell_j}$ ($j\in\{1,\ldots,N\}\setminus\{s\}$) and with the remaining weight factors $F^{k+\frac{1}{2}}$, $F^k$ and $F^{\frac{1}{2}}$ satisfying the compatibility condition
\begin{equation}\label{compatibleF}
F^{k+\frac{1}{2}}(x)=F^k(x+\frac{\eta}{2})F^{\frac{1}{2}}(x-k\eta).
\end{equation}
Then
\[ f_S^{\underline \ell} = \textup{Fus}_s^{\underline \ell}(f_S^{\underline \ell'}).  \]
\end{prop}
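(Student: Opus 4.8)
The plan is to verify directly, at the level of the Jackson integrals, the identity characterising the fusion map. By construction $\textup{Fus}_s^{\underline{\ell}}(f_S^{\underline{\ell}'})$ is the unique meromorphic function with $j^k_s\,\textup{Fus}_s^{\underline{\ell}}(f_S^{\underline{\ell}'})(\mathbf{t})=f_S^{\underline{\ell}'}(\mathbf{t}')$, with $\mathbf{t}'$ as in \eqref{expanded}; the required restriction of $f_S^{\underline{\ell}'}$ to the hyperplane $H$ is meromorphic because $f_S^{\underline{\ell}'}$ is the mero-uniformly convergent sum of Theorem~\ref{mr}. Since $j^k_s$ is injective, it therefore suffices to prove the termwise identity
\[
j^k_s\,f_S^{\underline{\ell}}(\mathbf{t})=f_S^{\underline{\ell}'}(\mathbf{t}'),
\]
both sides being sums over the same set $\mathbf{x}\in\mathbf{x}_0+\tau\mathbb{Z}^S$; I would match the scalar prefactors and the Bethe vectors separately. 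The factors $g_{\xi_+,\xi_-}(x_i)$ and $h(x_i\pm x_j)$ are literally the same in the two sums, and for $r\ne s$ the factors $F^{\ell_r}(t_r\pm x_i)$ coincide because $\mathbf{t}$ and $\mathbf{t}'$ agree in the corresponding entries with matching spins; for $r=s$ the compatibility condition \eqref{compatibleF} gives
\[
F^{k+\frac{1}{2}}(t_s\pm x_i)=F^{k}\bigl((t_s+\tfrac{\eta}{2})\pm x_i\bigr)F^{\frac{1}{2}}\bigl((t_s-k\eta)\pm x_i\bigr)=F^{\ell'_s}(t'_s\pm x_i)\,F^{\ell'_{s+1}}(t'_{s+1}\pm x_i).
\]
Hence the scalar prefactors agree and it remains to establish the vector identity
\[
j^k_s\,\overline{\mathcal{B}}^{\xi_-,(S),\underline{\ell}}(\mathbf{x};\mathbf{t})\,\Omega^{\underline{\ell}}=\overline{\mathcal{B}}^{\xi_-,(S),\underline{\ell}'}(\mathbf{x};\mathbf{t}')\,\Omega^{\underline{\ell}'}.
\]

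I would obtain this from a fusion relation for the boundary monodromy operators, built from the $R$-, $L$- and $K$-fusion formulae already established. The explicit formula for $\iota^k$ in Proposition~\ref{intertwiners} gives $\iota^k(v_1^{k+\frac{1}{2}})=v_1^{\frac{1}{2}}\otimes v_1^{k}$, hence $j^k(v_1^{k+\frac{1}{2}})=v_1^{k}\otimes v_1^{\frac{1}{2}}$ and $j^k_s\Omega^{\underline{\ell}}=\Omega^{\underline{\ell}'}$. The central claim is that
\[
(\textup{Id}_{V^{\frac{1}{2}}}\otimes j^k_s)\,T^{\underline{\ell}}(x;\mathbf{t})=T^{\underline{\ell}'}(x;\mathbf{t}')\,(\textup{Id}_{V^{\frac{1}{2}}}\otimes j^k_s)
\]
as meromorphic operator-valued functions of $x$. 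Since $j^k_s$ acts only on the state-space leg labelled $s$, it commutes past the $L$-operators at the other sites, and what remains is a local fusion identity for the $s$-th factor $L^{\frac{1}{2},k+\frac{1}{2}}(x-t_s)$ — which, projected to $V^{\frac{1}{2}}\otimes V^{k+\frac{1}{2}}$ in the manner of \eqref{finitered}, equals $R^{\frac{1}{2},k+\frac{1}{2}}(x-t_s)$ — asserting that $\textup{Id}_{V^{\frac{1}{2}}}\otimes j^k$ fuses its state-space tensor factor into the product of the $s$-th and $(s+1)$-th $L$-factors of $T^{\underline{\ell}'}(x;\mathbf{t}')$. I would derive this from the fusion formula of Proposition~\ref{FusionL} (equivalently \eqref{Lfusionj} and \eqref{Linvfusionj}) together with the $P$-symmetry of the $R$-operators (Lemma~\ref{PsymmLem}), which converts fusion of the first tensor factor into fusion of the second; the spectral shifts $t_s\mapsto(t_s+\frac{\eta}{2},\,t_s-k\eta)$ are precisely those dictated by the evaluation-module structure of $j^k$ in Proposition~\ref{intertwiners}(ii).

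Granting this, everything propagates. The operator $K^{\xi_-}(x)^{-1}$ acts only on the auxiliary space and so commutes with $j^k_s$, and since $T^{\underline{\ell}}(x;\mathbf{t})$ is invertible for generic $x$, the displayed identity also holds for $T^{\underline{\ell}}(x;\mathbf{t})^{-1}$ and for $T^{\underline{\ell}}(-x;\mathbf{t})$; hence it holds for $\mathcal{U}^{\xi_-,\underline{\ell}}(x;\mathbf{t})=T^{\underline{\ell}}(x;\mathbf{t})^{-1}K^{\xi_-}(x)^{-1}T^{\underline{\ell}}(-x;\mathbf{t})$, and passing to the $(1,2)$ matrix coefficient in the auxiliary space (which again commutes with $j^k_s$) yields $j^k_s\,\mathcal{B}^{\xi_-,\underline{\ell}}(x;\mathbf{t})=\mathcal{B}^{\xi_-,\underline{\ell}'}(x;\mathbf{t}')\,j^k_s$. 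The renormalisation passing from $\mathcal{B}$ to $\overline{\mathcal{B}}$ respects this, since the product $\prod_{r}\sinh(x-t_r-\ell_r\eta)/\sinh(x-t_r+\ell_r\eta)$ has its $r=s$ term equal, by the telescoping identity
\[
\frac{\sinh(x-t_s-(k+\frac{1}{2})\eta)}{\sinh(x-t_s+(k+\frac{1}{2})\eta)}=\frac{\sinh(x-t'_s-k\eta)}{\sinh(x-t'_s+k\eta)}\cdot\frac{\sinh(x-t'_{s+1}-\frac{1}{2}\eta)}{\sinh(x-t'_{s+1}+\frac{1}{2}\eta)},
\]
to the product of the corresponding $\underline{\ell}'$-terms, while the remaining factor $\sinh(\xi_--x-\frac{\eta}{2})\sinh(2x)/\sinh(2x+\eta)$ and the shift $x\mapsto x+\frac{\eta}{2}$ are spin-independent. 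Thus $j^k_s\,\overline{\mathcal{B}}^{\xi_-,\underline{\ell}}(x;\mathbf{t})=\overline{\mathcal{B}}^{\xi_-,\underline{\ell}'}(x;\mathbf{t}')\,j^k_s$; iterating over the $S$ spectral variables and applying the outcome to $\Omega^{\underline{\ell}}$ with $j^k_s\Omega^{\underline{\ell}}=\Omega^{\underline{\ell}'}$ gives the desired vector identity, completing the proof.

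I expect the main obstacle to be the local $L$-operator fusion identity that fuses the \emph{state-space} (second) tensor factor of $L^{\frac{1}{2},k+\frac{1}{2}}$: Proposition~\ref{FusionL} and \eqref{Lfusionj}, \eqref{Linvfusionj} are all phrased for fusing the first tensor factor, so the $P$-symmetry bookkeeping must be carried out with care, tracking the ordering of the two resulting $L$-factors and matching every spectral shift against \eqref{expanded}. A subsidiary point deserving attention is checking that $T^{\underline{\ell}}(x;\mathbf{t})^{-1}$ and the reflected copy $T^{\underline{\ell}}(-x;\mathbf{t})$ fuse with mutually consistent shifts, so that $\mathcal{U}^{\xi_-,\underline{\ell}}$ fuses as claimed; this is exactly what pins down the form of $\mathbf{t}'$ in \eqref{expanded}.
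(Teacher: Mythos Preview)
Your proposal is correct and follows essentially the same approach as the paper. The paper's proof likewise establishes $j^k_s T^{\underline{\ell}}(x;\mathbf{t}) = T^{\underline{\ell}'}(x;\mathbf{t}')\, j^k_s$ (and for the inverse) from the $L$-fusion formulae \eqref{Lfusionj}--\eqref{Linvfusionj}, propagates this to $\mathcal{U}$, $\mathcal{B}$ and then $\overline{\mathcal{B}}$ via the very telescoping identity you wrote, and concludes using $j^k_s\Omega^{\underline{\ell}}=\Omega^{\underline{\ell}'}$ together with \eqref{compatibleF}; your anticipated obstacle of fusing the state-space factor via $P$-symmetry is precisely the step the paper absorbs into its citation of \eqref{Lfusionj} and \eqref{Linvfusionj}.
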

\begin{rema}
Note that \eqref{compatibleF} is compatible with the difference equations that $F^\ell(x)$ satisfies (see Theorem \ref{mr}). Note furthermore that the explicit choice \eqref{wghtexplicit} of $F^\ell(x)$ ($\ell\in\mathbb{C}$) satisfies \eqref{compatibleF}.
\end{rema}
\begin{proof}
By virtue of the fusion formulae \eqref{Linvfusionj} and \eqref{Lfusionj}, we have (cf. \eqref{Tdefn})
\[ j^k_s T^{\underline \ell}(x;\mathbf t) = T^{\underline \ell'}(x;\mathbf t') j^k_s, \qquad  \qquad
j^k_s T^{\underline \ell}(x;\mathbf t)^{-1} = T^{\underline \ell'}(x;\mathbf t')^{-1} j^k_s,\]
where we use the notations \eqref{expanded}.
Hence, owing to \eqref{Udefn} we also have
\begin{equation} \label{Ufusion} 
j^k_s \mathcal U^{\xi,\underline \ell}(x;\mathbf t) = \mathcal U^{\xi,\underline \ell'}(x;\mathbf t') j^k_s.  
\end{equation}
The above three identities are as operators $V^{\frac{1}{2}} \otimes M^{(\ell_1,\ldots,\ell_{s-1})} \otimes V^{k+\frac{1}{2}} \otimes M^{(\ell_{s+1},\ldots,\ell_N)}$ $\to$ $V^{\frac{1}{2}} \otimes M^{(\ell_1,\ldots,\ell_{s-1})} \otimes V^k \otimes V^\frac{1}{2} \otimes M^{(\ell_{s+1},\ldots,\ell_N)}$.
Taking the appropriate matrix coefficients in \eqref{Ufusion} with respect to the auxiliary space,
we obtain
\[ j^k_s \mathcal B^{\xi,\underline \ell}(x;\mathbf t) = \mathcal B^{\xi,\underline \ell'}(x;\mathbf t') j^k_s \]
as operators $M^{(\ell_1,\ldots,\ell_{s-1})} \otimes V^{k+\frac{1}{2}} \otimes M^{(\ell_{s+1},\ldots,\ell_N)} \to M^{(\ell_1,\ldots,\ell_{s-1})} \otimes V^k \otimes V^\frac{1}{2} \otimes M^{(\ell_{s+1},\ldots,\ell_N)}$.

Writing
\[ \frac{\sinh(x-t_s-(k+\frac{1}{2})\eta)}{\sinh(x-t_s+(k+\frac{1}{2})\eta)} = \frac{\sinh(x-(t_s + \frac{\eta}{2})-k \eta)}{\sinh(x-(t_s + \frac{\eta}{2})+k \eta)} \frac{\sinh(x-(t_s-k \eta)-\frac{\eta}{2})}{\sinh(x-(t_s-k \eta)+\frac{\eta}{2})} \]
it follows that
\[ j^k_s \overline{\mathcal B}^{\xi,\underline \ell}(x;\mathbf t) = \overline{\mathcal B}^{\xi,\underline \ell'}(x;\mathbf t') j^k_s \]
and hence
\begin{equation} \label{Bbarfusion} j^k_s \overline{\mathcal B}^{\xi,(S),\underline \ell}(\mathbf x;\mathbf t) = \overline{\mathcal B}^{\xi,(S),\underline \ell'}(\mathbf x;\mathbf t') j^k_s. \end{equation}
Since $j^k_s \Omega^{\underline \ell} = \Omega^{\underline \ell'}$ (see Proposition \ref{intertwiners}) it now follows from
\eqref{compatibleF} that
\[
j_s^kf_S^{\underline{\ell}}(\mathbf{t})=f_S^{\underline{\ell}'}(\mathbf{t}')=
j_s^k\textup{Fus}_s^{\underline{\ell}}(f_S^{\underline{\ell}'})(\mathbf{t})
\]
as meromorphic $M^{(\ell_1,\ldots,\ell_{s-1})}\otimes V^k\otimes V^{\frac{1}{2}}\otimes M^{(\ell_s,\ldots,\ell_N)}$ valued
functions in $\mathbf{t}\in\mathbb{C}^N$, which proves the result.
\end{proof}

\begin{rema}
Note that $\sum_{r=1}^N\ell_r=\sum_{r=1}^{N+1}\ell_r'$ for $\underline{\ell}\in\mathbb{C}^N$ with $\ell_s=k$
and with $\underline{\ell}'$ given by \eqref{expanded}. 
Hence the region of meromorphic convergence \eqref{convergencecondition} for the solutions $f_S^{\underline{\ell}}$ and $f_S^{\underline{\ell}'}$ with weight factors \eqref{wghtexplicit} is compatible with fusion.\\
\end{rema}


\end{document}